\numberwithin{equation}{section}
\DeclareMathOperator{\sgn}{sgn}
\DeclareMathOperator*{\esssup}{ess\,sup}
\newcommand\wanteq{\stackrel{\mathclap{\normalfont\mbox{defined}}}{=}}
\theoremstyle{plain}
\newtheorem{theorem}{Theorem}[section]
\newtheorem*{theorem*}{Theorem}
\newtheorem{lemma}[theorem]{Lemma}
\newtheorem{proposition}[theorem]{Proposition}
\newenvironment{claim}[1]{\par\noindent\underline{Claim.}\space#1}{}
\newenvironment{claimproof}[1]{\par\noindent\emph{Proof of Claim.}\space#1}{\leavevmode\unskip\penalty9999 \hbox{}\nobreak\hfill\quad\hbox{$\blacksquare$}}
\theoremstyle{remark}
\newtheorem*{remark}{Remark}
\theoremstyle{definition}
\newtheorem{definition}[theorem]{Definition}
\title[Stability and Uniqueness for Burgers--Hilbert]{Stability and uniqueness for piecewise smooth solutions to Burgers--Hilbert among a large class of solutions}
\author[Krupa]{Sam G. Krupa}
\address{Department of Mathematics\\ The University of Texas at Austin\\ Austin, TX 78712\\ USA}
\email{skrupa@math.utexas.edu}
\author[Vasseur]{Alexis F. Vasseur}
\address{Department of Mathematics\\ The University of Texas at Austin\\ Austin, TX 78712\\ USA}
\email{vasseur@math.utexas.edu}
\thanks{The second author was partially supported by NSF Grant DMS-1614918.}
\date{April 20th, 2019}                                           
\begin{document}
\keywords{Piecewise-smooth solutions, single entropy condition, Burgers--Hilbert equation, weak entropy solutions, shock, stability, uniqueness.}
\subjclass[2010]{Primary 35L65; Secondary  35L60, 35L03, 35B65, 76B15, 35B35, 35D30, 35L67}
\begin{abstract}
In this paper, we show uniqueness and stability for the piecewise-smooth solutions to the Burgers--Hilbert equation constructed in Bressan and Zhang [{\em Commun. Math. Sci.}, 15(1):165--184, 2017]. The Burgers--Hilbert equation is $u_t+(\frac{u^2}{2})_x=\mathbf{H}[u]$ where $\mathbf{H}$ is the Hilbert transform, a nonlocal operator. We show stability and uniqueness for solutions amongst a larger class than the uniqueness result in Bressan and Zhang. The solutions we consider are measurable and bounded, satisfy at least one entropy condition, and verify a strong trace condition. We do not have smallness assumptions . We use the relative entropy method and theory of shifts (see Vasseur [{\em Handbook of Differential Equations: Evolutionary Equations}, 4:323 -- 376, 2008]).
\end{abstract}
\maketitle
\tableofcontents

\section{Introduction}

We consider a scalar balance law in one space dimension with a source term,
\begin{align}
\label{system}
\begin{cases}
\partial_t u + \partial_x A(u)=G(u(\cdot,t))(x),\mbox{ for } x\in\mathbb{R},\mbox{ } t>0,\\
u(x,0)=u^0(x)
\end{cases}
\end{align}

The unknown is $u:\mathbb{R}\times[0,T)\to \mathbb{R}$. The function $A:\mathbb{R}\to\mathbb{R}$ is the \emph{flux function}. In this paper, we only consider $A\in C^3(\mathbb{R})$ which are strictly convex. The function $G:L^2(\mathbb{R})\to L^2(\mathbb{R})$ is Lipschitz continuous and translation invariant. The \emph{initial data} is $u^0:\mathbb{R}\to\mathbb{R}$.

We consider both bounded  \emph{classical} and bounded \emph{weak} solutions to \eqref{system}. A weak solution $u$ is bounded and measurable and satisfies \eqref{system} in the sense of distributions. I.e., for every Lipschitz continuous test function $\Phi:\mathbb{R}\times[0,T)\to \mathbb{R}$ with compact support,
\begin{equation}
\begin{aligned}\label{u_solves_equation_integral_formulation_chitchat}
\int\limits_{0}^{T} \int\limits_{-\infty}^{\infty} \Bigg[\partial_t\Phi u + \partial_x\Phi f(u) \Bigg]\,dxdt +\int\limits_{-\infty}^{\infty} \Phi(x,0)u^0(x)\,dx
\\
=-\int\limits_{0}^{T} \int\limits_{-\infty}^{\infty}\Phi G(u(\cdot,t))(x)\,dxdt.
\end{aligned}
\end{equation}

A pair of functions $\eta,q\colon\mathbb{R}\to\mathbb{R}$ are called an entropy, and entropy-flux pair, respectively, for \eqref{system}  if they satisfy the compatibility relation $q'=A'\eta'$. We only consider solutions $u$ which are entropic for the entropy $\eta$. That is, they satisfy the following entropy condition:
\begin{align}\label{entropy_condition_distributional_system_chitchat}
\partial_t \eta(u)+\partial_x q(u) \leq \eta'(u)G(u(\cdot,t))(x),
\end{align}
in the sense of distributions. I.e., for all positive, Lipschitz continuous test functions $\phi:\mathbb{R}\times[0,T)\to\mathbb{R}$ with compact support:
 \begin{equation}
\begin{aligned}\label{u_entropy_integral_formulation_chitchat}
\int\limits_{0}^{T} \int\limits_{-\infty}^{\infty}\Bigg[\partial_t\phi\big(\eta(u(x,t))\big)+&\partial_x \phi \big(q(u(x,t))\big)\Bigg]\,dxdt+ \int\limits_{-\infty}^{\infty}\phi(x,0)\eta(u^0(x))\,dx\geq
\\
&-\int\limits_{0}^{T} \int\limits_{-\infty}^{\infty}\phi\eta'(u(x,t))G(u(\cdot,t))(x)\,dxdt.
\end{aligned}
\end{equation}

We study solutions $u$ to \eqref{system} among the class of functions verifying a strong trace property (first introduced in \cite{Leger2011}):

\begin{definition}\label{strong_trace_definition}
Fix $T>0$. Let $u\colon\mathbb{R}\times[0,T)\to\mathbb{R}$ verify $u\in L^\infty(\mathbb{R}\times[0,T))$. We say $u$ has the \emph{strong trace property} if for every fixed Lipschitz continuous map $h\colon [0,T)\to\mathbb{R}$, there exists $u_+,u_-\colon[0,T)\to\mathbb{R}$ such that
\begin{align}
\lim_{n\to\infty}\int\limits_0^{t_0}\esssup_{y\in(0,\frac{1}{n})}\abs{u(h(t)+y,t)-u_+(t)}\,dt=\lim_{n\to\infty}\int\limits_0^{t_0}\esssup_{y\in(-\frac{1}{n},0)}\abs{u(h(t)+y,t)-u_-(t)}\,dt=0
\end{align}
for all $t_0\in(0,T)$.
\end{definition}

Note that for example a function $u\in L^\infty(\mathbb{R}\times[0,T))$ will satisfy the strong trace property if for each fixed $h$, the right and left limits
\begin{align}
\lim_{y\to0^{+}}u(h(t)+y,t)\hspace{.7in}\mbox{and}\hspace{.7in}\lim_{y\to0^{-}}u(h(t)+y,t)
\end{align}
exist for almost every $t$. In particular, a function $u\in L^\infty(\mathbb{R}\times[0,T))$ will have strong traces according to \Cref{strong_trace_definition} if $u$ has a representative which is in $BV_{\text{loc}}$. However, the strong trace property is weaker than $BV_{\text{loc}}$.

Our method is the relative entropy method, a technique created by Dafermos \cite{doi:10.1080/01495737908962394,MR546634} and DiPerna \cite{MR523630}  to give $L^2$-type stability estimates between a Lipschitz continuous solution and a rougher solution, which is only weak and entropic for a strictly convex entropy (the so-called \emph{weak-strong} stability theory). For a system \eqref{system} endowed with an entropy $\eta$, the technique of relative entropy considers the quantity called the
\emph{relative entropy}, defined as
\begin{align}
\eta(u|v)\coloneqq \eta(u)-\eta(v)-\eta'(v)\cdot (u-v),
\end{align}
for $u,v\in\mathbb{R}$.

Similarly, we define relative entropy-flux,
\begin{align}
q(u;v)\coloneqq q(u)-q(v)-\eta'(v)\cdot (f(u)-f(v)).
\end{align}

Remark that for any constant $v\in\mathbb{R}$, the map $u\mapsto\eta(u|v)$ is an entropy for the system \eqref{system}, with associated entropy flux $u\mapsto q(u;v)$. Furthermore, if $u$ is a weak solution to \eqref{system} and entropic for $\eta$, then for a fixed $v\in\mathbb{R}$, $u$ will also be entropic for $\eta(\cdot|v)$. This can be calculated directly from \eqref{system} and \eqref{entropy_condition_distributional_system_chitchat} -- note that the map $u\mapsto\eta(u|v)$ is basically $\eta$ plus a linear term.

Moreover, by virtue of $\eta$ being \emph{strictly} convex, the relative entropy is comparable to the $L^2$ distance, in the following sense:

Due to the strict convexity of $\eta\in C^2(\mathbb{R})$, for $a,b\in\mathbb{R}$ in a fixed compact set, there exists $c*,c^{**}>0$ such that
\begin{align}\label{entropy_relative_L2_control_system}
c^*(a-b)^2\leq\eta(a|b)\leq c^{**}(a-b)^2.
\end{align}
The constants $c^*,c^{**}$ depend on the fixed compact set and bounds on the second derivative of $\eta$.

For a Lipschitz solution $\bar{u}$ to \eqref{system}, and a weak, entropic solution $u$, the method of relative entropy gives estimates on the growth in time of the quantity
\begin{align}
\norm{\bar{u}(\cdot,t)-u(\cdot,t)}_{L^2(\mathbb{R})}
\end{align}
by considering the time derivative $\partial_t\int\eta(u|\bar{u})\,dx$ and using that \eqref{entropy_relative_L2_control_system} gives the $L^2$ stability.

When a discontinuity is introduced into the otherwise smooth $\bar{u}$, the method of relative entropy breaks down. In fact, simple examples for the scalar conservation laws show that we cannot hope to get stability between $\bar{u}$ and $u$ in the form of the classical weak-strong estimates when $\bar{u}$ has a discontinuity. 

However, we can get weak-strong stability results via the relative entropy method when we allow the discontinuity in $\bar{u}$ to move with a speed which is artificially dictated and depends on $u$. This is the theory of stability up to a shift -- a program started in \cite{VASSEUR2008323} for combining shifts with the method of relative entropy. The theory of stability up to a shift has been undergoing heavy development through the last decade. The first result was for pure shock wave initial data for the scalar conservation laws \cite{Leger2011_original}. Then, by introducing the technique of a-contraction, progress has been made on pure shock wave initial data for systems \cite{MR3519973,MR3479527,MR3537479,serre_vasseur,Leger2011}. Furthermore, there are results for scalar viscous conservation laws in both one space dimension \cite{MR3592682} and multiple \cite{multi_d_scalar_viscous_9122017}. Recent work for scalar has allowed for many discontinuities to exist in the otherwise smooth $\bar{u}$ and shift each discontinuity independently as needed to keep $L^2$ stability, and in this way the method of relative entropy has in fact been used to show uniqueness for an arbitrary weak entropic solution (entropic for a single entropy) amongst the class of all weak solutions entropic for the same entropy. See \cite{2017arXiv170905610K}. For a general overview of theory of shifts and the relative entropy method, see \cite[Section 3-5]{MR3475284}. 
The theory of stability up to a shift has also been used to study the asymptotic limits when the limit is discontinuous (see \cite{MR3333670} for the scalar case, \cite{MR3421617} for systems). There are many other results using the relative entropy method to study the asymptotic limit. However, without the theory of shifts these results can only consider limits which are Lipschitz continuous (see  \cite{MR1842343,MR1980855,MR2505730,MR1115587,MR1121850,MR1213991,MR2178222,MR2025302} and \cite{VASSEUR2008323} for a survey).
 
The present paper is another step in this program of stability up to a shift. 

A case of \eqref{system} of particular interest is the Burgers--Hilbert equation,
\begin{align}\label{burgert_hilbert_system}
u_t+\Bigg(\frac{u^2}{2}\Bigg)_x=\mathbf{H}[u],\\
u(x,0)=u^0(x),
\end{align}
where $\mathbf{H}[u]$  is the Hilbert transform of $u$ and $u^0:\mathbb{R}\to\mathbb{R}$ is the initial data. The Hilbert transform of a function $f\in L^2(\mathbb{R})$ is defined as 
\begin{align}
\widehat{\mathbf{H}[f]}(\xi)\coloneqq-i\frac{\xi}{\abs{\xi}}\widehat{f}(\xi).
\end{align}
Note that the Hilbert transform is an isometry on $L^2(\mathbb{R})$, and hence Lipschitz of course. Note also the Hilbert transform is nonlocal. For initial data $u^0\in H^2(\mathbb{R})$, local existence and uniqueness for \eqref{burgert_hilbert_system} is given in \cite{MR2982741}, along with a precise estimate on how long the solution remains regular (for a shorter proof, see \cite{MR3348783}).  For general initial data $u^0\in L^2(\mathbb{R})$, the global existence in time of weak, entropic solutions to \eqref{burgert_hilbert_system} is shown in \cite{MR3248030}. In \cite{MR3248030},  the authors consider entropy solutions which are entropic for the large family of Kruzhkov entropies $\{\eta_k\}_{k\in\mathbb{R}}$, where
\begin{align}
\eta_k(u)\coloneqq\abs{u-k}. \label{kruzkov_entropies}
\end{align}
See \cite{MR0267257}.
The paper \cite{MR3248030} gives a partial uniqueness: uniqueness for entropic  solutions to \eqref{burgert_hilbert_system} which are spatially periodic and have  locally bounded variation. On the contrary, the results we present in this paper work without smallness assumptions and do not require that solutions have locally bounded variation. This is due to the method of relative entropy not being a perturbative theory  -- smallness does not play a role in the relative entropy method or the theory of shifts and a-contraction.

Further, in \cite{MR3605552} the authors prove short-time existence and uniqueness for piecewise-smooth solutions $\bar{u}$ to \eqref{burgert_hilbert_system} which have a single discontinuity in space, along a shock curve in space-time. They consider `entropic' solutions of the form
\begin{align}
\begin{cases}\label{form_for_burgers_hilbert_paper}
\hspace{1.5in}\bar{u}(x,t)=\phi(x-s(t))+w(x-s(t),t),\\
\mbox{where $t\mapsto s(t)$ is the location of the shock, $w(\cdot,t)\in H^2(\mathbb{R}\setminus\{0\})$, and}\\
\hspace{1.5in}\phi(x)\coloneqq \frac{2}{\pi}\abs{x}\ln(\abs{x})m(x)\\
\mbox{for the smooth bump function with compact support $m\in C^{\infty}_{\mathcal{C}}(\mathbb{R})$, with $m(x)=m(-x)$, }\\ \mbox{and verifying}\\
\hspace{1.5in}\begin{cases}
m(x)=1 &\hspace{.3in}\mbox{if}\hspace{.2in} \abs{x}\leq 1,\\
m(x)=0 &\hspace{.3in}\mbox{if}\hspace{.2in} \abs{x}\geq 2,\\
m'(x)\leq0 &\hspace{.3in}\mbox{if}\hspace{.2in} x\in[1,2].
\end{cases}\\
\mbox{Note, $\phi$ has compact support in $[-2,2]$ and is smooth for $x\neq0$. Further, their definition}\\ \mbox{of a solution $\bar{u}$ requires that $\bar{u}(s(t)-,t)>\bar{u}(s(t)+,t)$ and $\norm{w(\cdot,t)}_{H^2(\mathbb{R}\setminus\{0\})}$ is bounded}\\ \mbox{uniformly in time (see \cite[p.~168]{MR3605552} for details). Their solutions are smooth at}\\ \mbox{$(x,t)$ whenever $x\neq s(t)$.}
\end{cases}
\end{align}
The form \eqref{form_for_burgers_hilbert_paper} is the class of solutions in \cite{MR3605552} on which uniqueness for piecewise-smooth solutions is shown. 

Note that smooth solutions to \eqref{system} will be entropic (see \eqref{entropy_condition_distributional_system_chitchat}) for any strictly convex  entropy $\eta$, and will in fact satisfy \eqref{entropy_condition_distributional_system_chitchat} as an exact equality. Moreover, solutions of the form \eqref{form_for_burgers_hilbert_paper} will also be entropic for any strictly convex entropy (see \Cref{burgers_hilbert_solutions_are_entropic}).

The difficulty in the analysis of the Burgers--Hilbert equation is that for the classical conservation laws without a source term, there is the well-known $L^1$-contractive semigroup property for solutions. Adding a source term which is Lipschitz continuous from $L^1\to L^1$ allows the semigroup to stay continuous (see \cite{MR3248030}). However, the Hilbert transform is a bounded linear operator on $L^2$ and not on $L^1$.

We apply the relative entropy method. The relative entropy method is well adapted to handling this kind of source term, since the relative entropy method is an $L^2$-based theory.

In this paper, we consider solutions which are entropic for one strictly convex entropy $\eta$. In the case of $G=\mathbf{H}$ (the Hilbert transform), we consider the unique entropy $\eta(u)=\frac{u^2}{2}$. If $G$ is a possibly nonlinear bounded operator from $L^\infty$ to $L^\infty$ we can consider an $\eta$ which is any strictly convex entropy. To be precise, we consider $\eta$ and $G$ such that at least one of the following hold:
\begin{align}\label{requirements_G_piecewise_scalar}
\begin{cases}
\bullet\mbox{ }\eta(u)= \alpha u^2+\beta u+\gamma\mbox{ for some }\alpha\in(0,\infty), \beta\in\mathbb{R},\gamma\in\mathbb{R},\\
\mbox{or}\\
\bullet\mbox{ }$G$\mbox{ (from the right hand side of \eqref{system}) is bounded from}\\
L^\infty(\mathbb{R})\to L^\infty(\mathbb{R}).
\end{cases}
\end{align}

The requirement \eqref{requirements_G_piecewise_scalar} is due to technical concerns.

In this paper, we prove the following theorem for scalar balance laws in the form \eqref{system}, for stability of solutions $\bar{u}$ in the form \eqref{form_for_burgers_hilbert_paper} which are smooth on $\{(x,t)\in\mathbb{R}\times[0,T) | x<s(t)\}$ and on $\{(x,t)\in\mathbb{R}\times[0,T) | x>s(t)\}$, where $s:[0,T)\to\mathbb{R}$ is a Lipschitz function.

The result we prove is:
\begin{theorem}[Main theorem -- $L^2$ stability for entropic piecewise-Lipschitz solutions to scalar balance laws]\label{main_stability_result_statement_scalar}

 Fix $T>0$.

Consider $u,\bar{u}$ weak solutions to \eqref{system}. Assume  $u \in L^2(\mathbb{R}\times[0,T))\cap L^\infty(\mathbb{R}\times[0,T))$ verifies the strong trace property (\Cref{strong_trace_definition}) and is entropic for the strictly convex entropy $\eta\in C^3(\mathbb{R})$, where $\eta$ and $G$ verify \eqref{requirements_G_piecewise_scalar}. Further, assume $\bar{u}$ is in the form \eqref{form_for_burgers_hilbert_paper}.

Assume also that there exists $\delta>0$ such that 
\begin{align}\label{delta_room_scalar}
\bar{u}(s(t)-,t)-\bar{u}(s(t)+,t)>\delta
\end{align}
for all $t\in[0,T)$.

Then, there exists constants $C>0$ and $\rho,\gamma>1$ and a Lipschitz continuous function $X:[0,T)\to\mathbb{R}$ with $X(0)=0$ such that for $b\in[0,T)$,
\begin{equation}
\begin{aligned}\label{main_global_stability_result_scalar_theorem_version}
&\hspace{-.2in}\int\limits_{\mathbb{R}}\eta(u(x,b)|\bar{u}(x+X(b),b))\,dx\\
&\leq
C \Bigg[\Bigg(\int\limits_{\mathbb{R}}\eta(u(x,0)|\bar{u}(x,0))\,dx\Bigg)^{1/\gamma}
\\
&\hspace{1in}+
\Bigg(\int\limits_{\mathbb{R}}\eta(u(x,0)|\bar{u}(x,0))\,dx\Bigg)^\rho\Bigg] e^{Cb+
C\int\limits_0^{b}\norm{\partial_ x\bar{u}(\cdot,t)}_{L^2(\mathbb{R}\setminus\{x=s(t)\})}^2\,dt}.
\end{aligned}
\end{equation}

Moreover, 
\begin{align}
X(t)=s(t)-h(t),
\end{align}
where $h(t)$ is a generalized characteristic of $u$, and we have the following $L^2$-type control on $X$:
\begin{align}\label{L2_control_on_shift_theorem}
&\int\limits_{0}^{T}(\dot{X}(t))^2\,dt\leq \tilde{C}\Bigg[\Bigg(\int\limits_{\mathbb{R}}\eta(u(x,0)|\bar{u}(x,0))\,dx\Bigg)^{1/\tilde{\gamma}}
\\
&\hspace{2in}+
\Bigg(\int\limits_{\mathbb{R}}\eta(u(x,0)|\bar{u}(x,0))\,dx\Bigg)^{\tilde{\rho}}\Bigg],
\end{align}
for constants $\tilde{C}>0$ and $\tilde{\rho},\tilde{\gamma}>1$.

The constants $C,\gamma,\rho$ depend on $T$ and $\delta$. The constants $\tilde{C},\tilde{\gamma},\tilde{\rho}$ depend on $T$, $\delta$ and $\int\limits_0^{T}\norm{\partial_ x\bar{u}(\cdot,t)}_{L^2(\mathbb{R}\setminus\{x=s(t)\})}^2\,dt$.

\end{theorem}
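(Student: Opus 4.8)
The plan is to run the relative entropy method in a moving frame whose front tracks a generalized characteristic of the rough solution $u$. Set $v(x,t)\coloneqq\bar u(x+X(t),t)$ and study the functional $t\mapsto\int_{\mathbb{R}}\eta\big(u(x,t)\,|\,v(x,t)\big)\,dx$. The shift is built by first constructing a curve $h$ as a generalized characteristic of $u$, solving an ODE of the form $\dot h(t)=V\big(u(h(t)-,t),u(h(t)+,t)\big)$ for a velocity functional $V$ tuned to the one-sided traces of $u$ and the fixed jump $\big(\bar u(s(t)-,t),\bar u(s(t)+,t)\big)$ of $\bar u$; because $u$ is merely $L^\infty$ with strong traces, this ODE must be solved in the Filippov / generalized-characteristic sense. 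Then $X\coloneqq s-h$, so that the discontinuity of $v$ sits exactly on $x=h(t)$ and $X(0)=0$, with $X$ inheriting Lipschitz regularity from $s$ and $h$. Using translation invariance of $G$, the shifted state $v$ solves the modified equation $\partial_t v+\partial_x A(v)=G(v)+\dot X\,\partial_x v$ off the front, the extra term $\dot X\,\partial_x v$ encoding the artificial motion.

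Next I would derive the pointwise-in-time relative entropy inequality. Inserting admissible test functions into the weak entropy formulation \eqref{u_entropy_integral_formulation_chitchat} for $u$, and combining with the smooth entropy identity for $v$ off the front, a computation shows that $\frac{d}{dt}\int_{\mathbb{R}}\eta(u|v)\,dx$ decomposes into three pieces: a shock term localized at $x=h(t)$, of the form $\dot X\,\big[\eta(u_+|v_+)-\eta(u_-|v_-)\big]-\big[q(u_+;v_+)-q(u_-;v_-)\big]$ with $u_\pm,v_\pm$ the one-sided traces; interior terms over the two smooth regions, the leading one being $-\dot X\int\eta''(v)\,\partial_x v\,(u-v)\,dx$ together with the smooth-region relative-entropy flux; and the source contribution, which by the algebra of the relative entropy reduces exactly to $\int_{\mathbb{R}}\big[\eta'(u)-\eta'(v)\big]\big[G(u)-G(v)\big]\,dx+\int_{\mathbb{R}}\eta'(u|v)\,G(v)\,dx$. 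The strong trace property (\Cref{strong_trace_definition}) is precisely what makes the shock term well defined along $h$.

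The heart of the argument is to show that, for a suitable velocity $V$, the shock term is dissipative up to a controllable remainder, using the single entropy inequality, the strict-convexity comparison \eqref{entropy_relative_L2_control_system}, and the uniform gap \eqref{delta_room_scalar}. The gap $\delta$ forces the front to be compressive enough that the relative-entropy flux jump can be dominated by the shift contribution for an admissible $\dot X$, and the negative part produced simultaneously yields, after integration in time, the $L^2$ control \eqref{L2_control_on_shift_theorem} on $\dot X$. Part of this dissipation is then spent to absorb the interior cross term $-\dot X\int\eta''(v)\partial_x v(u-v)\,dx$ via Cauchy--Schwarz and Young, after bounding $\eta''(v)\abs{u-v}$ against $\sqrt{\eta(u|v)}$ and using $\norm{\partial_x v(\cdot,t)}_{L^2(\mathbb{R})}=\norm{\partial_x\bar u(\cdot,t)}_{L^2(\mathbb{R}\setminus\{x=s(t)\})}$. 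For the source term I would invoke \eqref{requirements_G_piecewise_scalar}: for quadratic $\eta$ and $G=\mathbf H$ one has $\eta'(u|v)\equiv0$, so the only surviving source contribution is $\int_{\mathbb{R}}(u-v)\,\mathbf H[u-v]\,dx$, which vanishes identically by skew-adjointness of the Hilbert transform on $L^2(\mathbb{R})$; in the alternative regime, with $G$ bounded $L^\infty\to L^\infty$, both source integrals are bounded by $C\int_{\mathbb{R}}\eta(u|v)\,dx$ through \eqref{entropy_relative_L2_control_system}. Collecting everything yields a differential inequality controlling the growth of $\int\eta(u|v)\,dx$ by $C\big(1+\norm{\partial_x\bar u(\cdot,t)}_{L^2(\mathbb{R}\setminus\{x=s(t)\})}^2\big)$ times itself plus nonlinear remainders from the front; closing it through a Grönwall-type argument that handles the remainders via the recovered dissipation and a nonlinear bootstrap gives \eqref{main_global_stability_result_scalar_theorem_version}, the fractional and super-linear exponents $1/\gamma$ and $\rho$ being the cost of absorbing the front terms rather than a purely linear dependence on the initial data.

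The main obstacle I expect is establishing the sign of the shock dissipation for a genuinely rough solution. Because $u$ is only bounded and measurable with one-sided traces, the classical Rankine--Hugoniot and Lax computations are unavailable and must be replaced by trace-based inequalities valid for every admissible jump; exhibiting a single velocity functional $V$ that keeps the shift ODE solvable and forces the shock term to be dissipative, uniformly in $t$ and under only one entropy condition, is the delicate point. Threading the nonlocal source through this estimate without spoiling the sign is exactly what forces the dichotomy \eqref{requirements_G_piecewise_scalar}, and the logarithmic singularity of $\partial_x\phi$ at the front --- which is nonetheless square integrable, so that the exponent $\int_0^b\norm{\partial_x\bar u(\cdot,t)}_{L^2(\mathbb{R}\setminus\{x=s(t)\})}^2\,dt$ stays finite --- must be handled with care throughout.
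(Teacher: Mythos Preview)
Your architecture matches the paper's: take $h$ a generalized characteristic of $u$, set $X=s-h$, compute the dissipation of $\int\eta(u|\bar u(\cdot+X))$, absorb the interior cross term $\dot X\int\eta''(\bar u)\,\partial_x\bar u\,(u-\bar u)$ using negativity manufactured at the front, treat the source via translation invariance and \eqref{requirements_G_piecewise_scalar}, and close by Gronwall. Two corrections: the velocity should \emph{not} be tuned to the jump of $\bar u$; it is simply $V(u)=A'(u)$, so that along the Filippov solution $\dot h(t)=\sigma(u_+,u_-)$, the Rankine--Hugoniot speed of $u$ alone. And the shock term carries $\dot h$, not $\dot X$: it is $q(u_+;\bar u_+)-q(u_-;\bar u_-)-\dot h\,[\eta(u_+|\bar u_+)-\eta(u_-|\bar u_-)]$. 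The genuine gap is that you correctly name this term's sign as the main obstacle but leave it open. The paper's key new ingredient is a structural lemma showing that with the specific choice $\dot h=\sigma(u_+,u_-)$ and under the gap $\bar u_--\bar u_+\geq\delta$, this quantity is bounded above by $-c\big[(u_+-\bar u_+)^2+(u_--\bar u_-)^2\big]$ for a uniform $c(\delta,B)>0$; combined with the elementary Lipschitz bound $|\sigma(\bar u_+,\bar u_-)-\sigma(u_+,u_-)|^2\leq C\big[(u_+-\bar u_+)^2+(u_--\bar u_-)^2\big]$ this converts to $\leq -c'(\dot X)^2$, which is exactly what is needed to swallow the cross term via Young's inequality and to yield \eqref{L2_control_on_shift_theorem}. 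Without this quantitative negativity your Gronwall cannot close. The proof of the structural lemma rewrites the dissipation as a signed sum of integrals of $\eta''(u)\big[(A(u)-\sigma u)-(A(u)-\sigma u)_\pm\big]$ over the symmetric difference of $(u_+,u_-)$ and $(\bar u_+,\bar u_-)$, then exploits strict convexity and the $\delta$-gap by a case analysis; this is where the real work is.

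You also leave unspecified how the logarithmic singularity of $\partial_x\bar u$ produces the exponents $1/\gamma,\rho$. The paper splits the integral against $\ln|x-h(t)|$ into $B_\epsilon(h(t))$ (estimated by $L^1$--$L^\infty$) and its complement (by $L^\infty$--$L^1$), yielding an additive $\epsilon+\epsilon|\ln\epsilon|$ term and an extra $|\ln\epsilon|$ in the Gronwall rate; after integrating one chooses $\epsilon$ equal to the initial relative entropy. Because of the $|\ln\epsilon|$ in the rate, this closes only on a time window of length $O(1/C^*)$, and an induction on subintervals is then needed to reach $[0,T)$---this iteration is what makes $\gamma,\rho$ depend on $T$ and is missing from your outline. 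Your skew-adjointness remark for $G=\mathbf H$ is correct but not what the paper uses; it instead bounds $\|G(\bar u)(\cdot+X)-G(u)\|_{L^2}\leq\mathrm{Lip}[G]\,\|u-\bar u(\cdot+X)\|_{L^2}$, which covers any translation-invariant $L^2$-Lipschitz $G$, not just the Hilbert transform.
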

\begin{remark}
\hfill
\begin{itemize}
\item
The map $[0,T)\ni t\mapsto \norm{\partial_ x\bar{u}(\cdot,t)}_{L^2(\mathbb{R}\setminus\{x=s(t)\})}^2$ is in $L^1([0,T))$ because $\partial_x \bar{u} \in  L^2((\mathbb{R}\times[0,T))\setminus\{(x,t)|x=s(t)\})$ by the assumption that $\bar{u}$ is in the form \eqref{form_for_burgers_hilbert_paper}.
\item
Although the main application for \Cref{main_stability_result_statement_scalar} is to take $G=\mathbf{H}$ the Hilbert transform, \Cref{main_stability_result_statement_scalar} holds for all translation invariant, Lipschitz continuous $G$ and whenever $\bar{u}$ is in the form \eqref{form_for_burgers_hilbert_paper}.

\item
Instead of \eqref{requirements_G_piecewise_scalar}, the proof of\Cref{main_stability_result_statement_scalar} will actually go through whenever we have an estimate of the form
\begin{align}\label{generalized_requirement_remark_scalar_paper}
\abs{\int\limits_{x_1}^{x_2} \eta'(u(x,t)|\bar{u}(x+X(t),t))G(u(\cdot,t))(x)\,dx} \leq C \int\limits_{x_1}^{x_2} \abs{\eta'(u(x,t)|\bar{u}(x+X(t),t))}\,dx,
\end{align}
for $x_1,x_2\in\mathbb{R}$ and some constant $C>0$. Remark that for $u\in L^\infty$, \eqref{requirements_G_piecewise_scalar} implies \eqref{generalized_requirement_remark_scalar_paper}. This is due to $\eta'(a|b)\equiv 0$ for all $a,b\in\mathbb{R}$ whenever $\eta$ is a quadratic polynomial.
\item
If $\bar{u}$ is Lipschitz continuous on $\mathbb{R}\setminus\{x=s(t)\}$ and there exists a constant $C>0$ such that 
\begin{align}
\norm{\partial_ x\bar{u}(\cdot,t)}_{L^2(\mathbb{R}\setminus\{x=s(t)\})}\leq C
\end{align}
 for all $t\in[0,T)$ , which in particular are common assumption for solutions to the scalar conservation laws without a source term, then with slight modifications to the argument used to prove \Cref{main_stability_result_statement_scalar} (in fact simpler), we can get stability estimates of the form 

\begin{align}\label{main_global_stability_result_scalar_if_lambda4}
\int\limits_{\mathbb{R}}\abs{u(x,t_0)-\bar{u}(x+X(t_0),t_0)}^2\,dx\leq Ce^{Ct_0}\int\limits_{\mathbb{R}}\abs{u^0(x)-\bar{u}^0(x)}^2\,dx,
\end{align}
for a constant $C>0$ and for all $t_0>0$. Further, we get control on the shift in the form of $L^2$-type control on $X$:
\begin{align}\label{control_on_shift_piecewise_scalar_L2}
\int\limits_0^{t_0}(\dot{X}(t))^2\,dt \leq C\int\limits_{\mathbb{R}}\abs{u^0(x)-\bar{u}^0(x)}^2\,dx,
\end{align}
for all $t_0>0$.
These calculations are done explicitly for the systems case in a forthcoming paper \cite{move_entire_solution_system}. When $\bar{u}$ is Lipschitz continuous on $\big(\mathbb{R}\setminus\{x=s(t)\}\big)\times[0,\infty)$, the constant $C$ does not depend on the time interval $[0,T)$ on which the solution exists. Note that \eqref{control_on_shift_piecewise_scalar_L2} shows that the shift $X$ converges to zero  as $t\to\infty$ whenever $u^0-\bar{u}^0\in L^2(\mathbb{R})$. Further, note that the estimate \eqref{control_on_shift_piecewise_scalar_L2} can be applied to the scalar conservation law \eqref{system} (with $G\equiv0$) and when $\bar{u}$ is a pure shock wave with two constant states, \eqref{control_on_shift_piecewise_scalar_L2} shows that all of the generalized characteristics of $u$ eventually converge in speed to the shock speed of $\bar{u}$ if $u^0-\bar{u}^0\in L^2(\mathbb{R})$ (by noting that translations of $u$ are also $L^2$-close to $\bar{u}$ we can study all the characteristics of $u$, not just the characteristics $h$ verifying $h(0)=s(0)$). In particular, we have given an answer to Leger's conjecture on the large-time behavior of the shift \cite[p.~763]{Leger2011_original}. 
\item From H\"older duality, note that \eqref{control_on_shift_piecewise_scalar_L2} implies an estimate of the form
 \begin{align}\label{control_on_shift_piecewise_scalar_L2_average}
\frac{1}{t_0}\int\limits_0^{t_0}\abs{\dot{X}(t)}\,dt \leq \frac{C}{\sqrt{t_0}}\norm{u^0(\cdot)-\bar{u}^0(\cdot)}_{L^2(\mathbb{R})},
\end{align} 
and similarly \eqref{L2_control_on_shift_theorem} implies  
 \begin{align}\label{control_on_shift_piecewise_scalar_L2_average1}
&\frac{1}{T}\int\limits_0^{T}\abs{\dot{X}(t)}\,dt \leq \frac{C}{\sqrt{T}}\Bigg[\Bigg(\int\limits_{\mathbb{R}}\eta(u(x,0)|\bar{u}(x,0))\,dx\Bigg)^{1/\tilde{\gamma}}
\\
&\hspace{2in}+
\Bigg(\int\limits_{\mathbb{R}}\eta(u(x,0)|\bar{u}(x,0))\,dx\Bigg)^{\tilde{\rho}}\Bigg]^{1/2}.
\end{align} 
\end{itemize}
\end{remark}

By applying \Cref{main_stability_result_statement_scalar} with $A(u)=\frac{u^2}{2}$, $\eta(u)=\frac{u^2}{2}$, and $G$ the Hilbert transform, we are able to show that the piecewise-smooth solutions to the Burgers--Hilbert equation are unique among a much larger class of solutions than those considered in \cite{MR3605552}: For a fixed $T>0$, we show uniqueness for the solutions from\cite{MR3605552} amongst all weak solutions $u$ to \eqref{burgert_hilbert_system} verifying the strong trace property (\Cref{strong_trace_definition}), $u\in L^2(\mathbb{R}\times[0,T))\cap L^\infty(\mathbb{R}\times[0,T))$, initial data $u^0\in L^2(\mathbb{R})$, and entropic for $\eta(u)=\frac{u^2}{2}$. 

Note we can apply \Cref{main_stability_result_statement_scalar} to the piecewise-smooth solutions constructed in \cite{MR3605552} (the $\bar{u}$ in the context of \Cref{main_stability_result_statement_scalar}) because  due to the smoothness of the solutions in \cite{MR3605552}, for each finite time $T$ we can use compactness to find a $\delta$ to verify \eqref{delta_room_scalar}.

In particular, we do not require the lines worth of entropies \eqref{kruzkov_entropies} to show uniqueness. By virtue of the relative entropy method, we use only a single entropy condition. The single entropy condition for the classic Burgers equation is itself a relatively new development (see \cite{panov_uniquness,delellis_uniquneness,2017arXiv170905610K}) and is of interest because for systems of conservation laws or balance laws, often only one nontrivial entropy exists, and we certainly cannot hope for all of the  Kruzhkov entropies \eqref{kruzkov_entropies}. In particular, due to our use of the relative entropy method, there is hope that the techniques developed in this paper will extend to systems.

Our stability and uniqueness result \Cref{main_stability_result_statement_scalar} is sharp in the following sense: in \cite{MR3248030} they construct a source term $G$ for \eqref{system}  which is Lipschitz continuous $L^2\to L^2$. They use the quadratic flux, $A(u)=\frac{u^2}{2}$. However, for this $G$ they construct initial data which has multiple solutions, all with bounded variation and uniformly compact support. They conjecture then that any proof of uniqueness for \eqref{burgert_hilbert_system} must rely on specific properties of the Hilbert transform. And indeed, our proof of \Cref{main_stability_result_statement_scalar} relies on the translation invariance of $G$, a property also of the Hilbert transform.

As noted in \cite{MR3605552} and \cite{MR3248030}, the well-posedness and uniqueness of solutions for \eqref{burgert_hilbert_system} remain largely open questions.

All the previous results in the program of stability up to a shift work by rewriting the piecewise smooth $\bar{u}$ as two solutions to \eqref{system}, both defined on all of $\mathbb{R}$, and then letting the shift function decide how much of each solution (in $x$) to use at each particular time $t$. This is in particular how \cite{2017arXiv170905610K} works. Creating these two different solutions is often easy enough when the $\bar{u}$ is piecewise constant or \eqref{system} is scalar. But it is less obvious how to create these two separate solutions when \eqref{system} has multiple conserved quantities or when \eqref{system} is nonlocal. When \eqref{system} is nonlocal, we cannot simply extend each half (to the left/right of the jump discontinuity) of $\bar{u}$ to all of $\mathbb{R}$.

Thus, in this paper we introduce the idea of not moving the discontinuity in $\bar{u}$ to keep stability in $L^2$, but instead translating the entire $\bar{u}$ in $x$, as a function of $u$ and $t$. There is a key difficulty in this: translating $\bar{u}$ causes entropy production so large (see \Cref{global_entropy_dissipation_rate_systems}) we can no longer close the key Gronwall argument to get $L^2$ stability. To solve this problem, we give a novel construction of the shift function which actually creates negative entropy -- \emph{comparable in amount to the excess growth in the Gronwall} (see \Cref{negative_entropy_diss_scalar_lemma} and the proof of \Cref{main_stability_result_statement_scalar}). This closes the Gronwall argument and gives the main theorem, \Cref{main_stability_result_statement_scalar}.

The key insight into the construction of the new shift function is that for the scalar case, if $u(\cdot,t)$ (the function in the first slot of the relative entropy) and $\bar{u}(\cdot,t)$ (the function in the second slot) both have a discontinuity at $x_0\in\mathbb{R}$, then we can always kill the entropy growth between $\bar{u}$ and $u$ in $L^2$ by moving the discontinuity in $\bar{u}$ at the speed of the discontinuity in $u$. This is exactly what \Cref{negative_entropy_diss_scalar_lemma} is saying. In other words, the shift function is the generalized characteristic of $u$ (see Dafermos \cite[Chapter 10]{dafermos_big_book} for generalized characteristics). Further, this very natural shift function does more than neutralize all growth in $L^2$ that would otherwise occur due to the discontinuity in $\bar{u}$: it creates additional negative entropy. This negativity allows us to translate $\bar{u}$ in $x$. Further, this negative entropy is responsible for the novel $L^2$ control presented on the shifts in this paper (see \eqref{L2_control_on_shift_theorem} and \eqref{control_on_shift_piecewise_scalar_L2}). It is striking that we can use a generalized characteristic as a shift function.

Assume $\bar{u}$ has a discontinuity along the curve $x=s(t)$. Then when  computing $\partial_t\int\eta(u|\bar{u})\,dx$ using  \eqref{entropy_condition_distributional_system_chitchat}, the discontinuity in $\bar{u}$ dissipates entropy relative to $u$, in the amount of 
\begin{equation}
\begin{aligned}\label{example_dissipation}
&q(u_+;\bar{u}_+)-q(u_-;\bar{u}_-)-\sigma(\bar{u}_+,\bar{u}_-)\big(\eta(u_+|\bar{u}_+)-\eta(u_-|\bar{u}_-)\big),
\end{aligned}
\end{equation}
where $u_{\pm}\coloneqq u(s(t)\pm,t)$, $\bar{u}_{\pm}\coloneqq \bar{u}(s(t)\pm,t)$, and  $\sigma(\bar{u}_+,\bar{u}_-)$ is the speed of the shock connecting $\bar{u}_-$ and $\bar{u}_+$ (see \Cref{global_entropy_dissipation_rate_systems} for details).

Loosely speaking, previous shift functions, including as used in previous a-contraction papers and in \cite{Leger2011_original}, were more ad hoc and worked by making $\sigma(\bar{u}_+,\bar{u}_-)$ an unknown, setting the dissipation \eqref{example_dissipation} equal to zero and solving for $\sigma(\bar{u}_+,\bar{u}_-)$, giving something like
\begin{equation}
\begin{aligned}\label{idea_for_shift_feb2019_1}
&\hspace{.32in}\sigma(\bar{u}_+,\bar{u}_-)\hspace{.29in}\wanteq\hspace{.29in} \frac{\max\{0,q(u_+;\bar{u}_+)-q(u_-;\bar{u}_-)\}}{\eta(u_+|\bar{u}_+)-\eta(u_-|\bar{u}_-)}
\end{aligned}
\end{equation}
(see \cite{serre_vasseur} or \cite{Leger2011_original} for more details).
Then we study the map,
\begin{align}\label{set_to_0_map}
(u,u_L,u_R)\mapsto  \frac{\max\{0,q(u;u_R)-q(u;u_L)\}}{\eta(u|u_R)-\eta(u|u_L)}
\end{align}
(motivated by the case where $u(\cdot,t)$ is continuous at $s(t)$). Unfortunately, the map \eqref{set_to_0_map} is unyielding to analysis. Simple  bounds on this map are difficult to obtain (see for example the proof of Lemma 3.4 in \cite{2017arXiv170905610K}). Moreover, the map \eqref{set_to_0_map} is not continuous or  upper semi-continuous in $u$, which creates the need for a secondary mollification argument not needed in the present paper.

Further, by design, when $u(\cdot,t)$ is continuous at $s(t)$, the shift function created based on the above ideas (\eqref{idea_for_shift_feb2019_1} and \eqref{set_to_0_map}) will make the entropy growth between $u$ and $\bar{u}$ identically zero, which is not good for us. We want some additional negativity.

An intuitive way to see that the generalized characteristic of $u$ is the correct shift function is this:
Fix a $v\in\mathbb{R}$. Note that if $u$ is entropic for the entropy $\eta$, $u$ is also entropic for the entropy
\begin{align}\label{relative_entropy_with_v_scalar}
u\mapsto \eta(u|v).
\end{align}
This is due to the map \eqref{relative_entropy_with_v_scalar} being just  $\eta(u)$ plus a constant term and a term linear in $u$.

Then, if the shock $(u_L,u_R,\sigma(u_L,u_R))$ is entropic for $\eta$, we also have for \eqref{relative_entropy_with_v_scalar}:
\begin{align}\label{shock_entropic_rel_entropic_scalar_for_v}
q(u_R;v)-q(u_L;v)-\sigma(u_L,u_R)(\eta(u_R|v)-\eta(u_L|v))\leq0.
\end{align}
Note the speed in \eqref{shock_entropic_rel_entropic_scalar_for_v} is $\sigma(u_L,u_R)$ -- the speed of the discontinuity in the first slot of the relative entropy. The speed is \emph{not} $\sigma(v,v)=A'(v)$. 

\vspace{.07in}

The outline of the paper is as follows. First, in \Cref{tech_lemmas_sec} we present some technical lemmas and some structural lemmas on the system \eqref{system} which we will need. Then, in \Cref{neg_entrop_sec} we present the proof of the additional negative entropy the generalized characteristic causes when used as a shift function (see \Cref{negative_entropy_diss_scalar_lemma}). Finally, in \Cref{main_proof_sec} we translate the piecewise-smooth solution in $x$ artificially to prove \Cref{main_stability_result_statement_scalar}, and we cancel the entropy caused by the translation using the negative entropy from the generalized-characteristic-based shift function.

\section{Technical Lemmas}\label{tech_lemmas_sec}

Throughout this paper, we will use as notation and define the relative flux,
\begin{align}
\label{Z_def}
A(u|\bar{u})\coloneqq A(u)-A(\bar{u})- A'(\bar{u})(u-\bar{u}).
\end{align}

We will also use the relative entropy \emph{derivative},
\begin{align}
\label{entropy_rel_deriv}
\eta'(u|\bar{u})\coloneqq \eta'(u)-\eta'(\bar{u})- \eta''(\bar{u})(u-\bar{u}).
\end{align}

For completeness, we state the following basic fact:
\begin{lemma}\label{burgers_hilbert_solutions_are_entropic}
A solution $\bar{u}$ to \eqref{system}  of the form \eqref{form_for_burgers_hilbert_paper} will be entropic, i.e. satisfy \eqref{entropy_condition_distributional_system_chitchat}, for any strictly convex entropy $\eta\in C^2(\mathbb{R})$.
\end{lemma}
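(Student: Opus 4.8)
The plan is to verify the distributional entropy inequality \eqref{entropy_condition_distributional_system_chitchat} directly, exploiting that a solution of the form \eqref{form_for_burgers_hilbert_paper} is a \emph{classical} solution away from the single shock curve $x=s(t)$ and that the jump there is a Lax shock (because $\bar u_->\bar u_+$ by the defining condition of \eqref{form_for_burgers_hilbert_paper}). Writing $\bar u_\pm\coloneqq\bar u(s(t)\pm,t)$, the whole question splits into two pieces: the smooth bulk, where I expect exact equality, and the shock curve, where I must check a sign.

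First I would handle the bulk. On each of the open regions $\{x<s(t)\}$ and $\{x>s(t)\}$ the function $\bar u$ is smooth and solves $\partial_t\bar u+\partial_x A(\bar u)=G(\bar u)$ classically. Multiplying by $\eta'(\bar u)$ and using the compatibility relation $q'=A'\eta'$ together with the chain rule gives the pointwise identity $\partial_t\eta(\bar u)+\partial_x q(\bar u)=\eta'(\bar u)G(\bar u)$ on each region, so the entropy relation holds there as an \emph{equality}, for any $C^2$ entropy (convexity is not yet needed).

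Next, for a nonnegative test function $\phi$, I would split the space--time integral in \eqref{u_entropy_integral_formulation_chitchat} along $x=s(t)$ and integrate by parts on each region. The boundary contributions on $t=0$ cancel the explicit initial-data term (since $\bar u(\cdot,0)=\bar u^0$), and the contributions along the Lipschitz curve combine into a single line integral whose density is $E(t)\coloneqq q(\bar u_+)-q(\bar u_-)-\dot s(t)\,(\eta(\bar u_+)-\eta(\bar u_-))$. Because $\bar u$ is a weak solution of \eqref{system} and $G(\bar u(\cdot,t))$ is a bounded function (so it carries no Dirac mass on the curve), the Rankine--Hugoniot relation $\dot s(t)=\big(A(\bar u_+)-A(\bar u_-)\big)/(\bar u_+-\bar u_-)$ holds for a.e.\ $t$. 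The entropy inequality then reduces to the single scalar claim $E(t)\le 0$.

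Finally I would prove $E\le 0$ by a standard integral identity. Using $q'=A'\eta'$ I rewrite $E=\int_{\bar u_-}^{\bar u_+}\eta'(w)\big(A'(w)-\dot s\big)\,dw$. Setting $g(w)\coloneqq A(w)-A(\bar u_-)-\dot s\,(w-\bar u_-)$, the Rankine--Hugoniot relation yields $g(\bar u_-)=g(\bar u_+)=0$, so integration by parts gives $E=-\int_{\bar u_-}^{\bar u_+}\eta''(w)\,g(w)\,dw$. Strict convexity of $A$ makes $g$ strictly convex with both of these as its roots, hence $g<0$ strictly between $\bar u_+$ and $\bar u_-$; combined with $\eta''>0$ and the ordering $\bar u_->\bar u_+$, the integrand has a definite sign and $E<0\le 0$, as needed. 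I expect the only delicate point to be the bookkeeping of the boundary terms across the merely Lipschitz curve $s$ (so that $\dot s$, the Rankine--Hugoniot relation, and the line integral are all read a.e.\ in $t$); the algebraic sign computation itself is classical and robust.
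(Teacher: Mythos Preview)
Your proposal is correct and follows essentially the same route as the paper: exact equality on the smooth regions, then checking that the single shock with $\bar u_->\bar u_+$ dissipates entropy. The only difference is packaging: the paper outsources the sign computation for $E(t)=q(\bar u_+)-q(\bar u_-)-\dot s(t)(\eta(\bar u_+)-\eta(\bar u_-))$ to \Cref{entropic_shock_lemma_scalar_lax} (which states that a Rankine--Hugoniot shock is entropic for a strictly convex $\eta$ iff $u_L\ge u_R$), whereas you prove that inequality inline via the integral identity $E=-\int_{\bar u_-}^{\bar u_+}\eta''(w)g(w)\,dw$ with $g$ strictly convex and vanishing at both endpoints. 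Your inline argument is precisely the standard proof of that lemma, so the two approaches are the same in substance.
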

\begin{proof}
This follows because smooth solutions to \eqref{system} will satisfy \eqref{entropy_condition_distributional_system_chitchat} as an exact equality for any smooth  entropy $\eta$. Further, solutions of the form \eqref{form_for_burgers_hilbert_paper} are smooth except along the shock curve $t\mapsto s(t)$, along which they verify $\bar{u}(s(t)-,t)>\bar{u}(s(t)+,t)$. Thus, \Cref{entropic_shock_lemma_scalar_lax} ensures that $\bar{u}$ satisfies \eqref{entropy_condition_distributional_system_chitchat}.
\end{proof}

The following lemma gives the relative entropy dissipation produced by three sources: a shock, the source term $G$, and  translating the piecewise smooth solution $\bar{u}$ by a function $t\mapsto X(t)$.
\begin{lemma}[Global entropy dissipation rate]\label{global_entropy_dissipation_rate_systems}

Fix $T>0$.

Consider $u,\bar{u}$ weak solutions to \eqref{system}. Assume  $u \in L^2(\mathbb{R}\times[0,T))\cap L^\infty(\mathbb{R}\times[0,T))$ verifies the strong trace property (\Cref{strong_trace_definition}) and is entropic for the strictly convex entropy $\eta\in C^3(\mathbb{R})$. Further, assume $\bar{u}$ is in the form \eqref{form_for_burgers_hilbert_paper}. Also, assume that $u^0-\bar{u}^0\in L^2(\mathbb{R})$.

Let  $h:[0,T)\to\mathbb{R}$ be a Lipschitz continuous map.

Define
\begin{align}
X(t)\coloneqq s(t)-h(t),\label{X_def_calc}
\end{align}
where $s$ is as in \eqref{form_for_burgers_hilbert_paper}.

Then for almost every $a,b\in[0,T)$ verifying $a<b$, 
\begin{equation}
\begin{aligned}\label{nonlocal_dissipation_formula}
&\int\limits_{-\infty}^{\infty}\eta(u(x,b)|\bar{u}(x+X(b),b))\,dx-\int\limits_{-\infty}^{\infty}\eta(u(x,a)|\bar{u}(x+X(a),a))\,dx
\\
&\hspace{.7in}\leq\int\limits_{a}^{b}q(u(h(t)+,t);\bar{u}(s(t)+,t))-q(u(h(t)-,t);\bar{u}(s(t)-,t))
\\
&\hspace{.7in}-\dot{h}(t)\big(\eta(u(h(t)+,t)|\bar{u}(s(t)+,t))-\eta(u(h(t)-,t)|\bar{u}(s(t)-,t))\big)\,dt
\\
&\hspace{.7in}-\int\limits_{a}^{b} \int\limits_{-\infty}^{\infty}\Bigg[\Bigg(\partial_x \bigg|_{(x+X(t),t)}\hspace{-.45in} \eta'(\bar{u}(x,t))\Bigg) A(u(x,t)|\bar{u}(x+X(t),t))
\\
&\hspace{.7in}+\Bigg(2\partial_x\bigg|_{(x+X(t),t)}\hspace{-.45in}\bar{u}(x,t)\dot{X}(t)\Bigg)\eta''(\bar{u}(x+X(t),t))[u(x,t)-\bar{u}(x+X(t),t)]
\\
&\hspace{.7in}-\eta'(u(x,t)|\bar{u}(x+X(t),t))G(u(\cdot,t))(x)
\\
&\hspace{-.2in}+
\Bigg(G(\bar{u}(\cdot,t))(x+X(t))-G(u(\cdot,t))(x)\Bigg)\eta''(\bar{u}(x+X(t),t))[u(x,t)-\bar{u}(x+X(t),t)]\Bigg]\,dxdt.
\end{aligned}
\end{equation}

Moreover, \eqref{nonlocal_dissipation_formula} holds for $a=0$  and almost every $b\in(0,T)$.

\end{lemma}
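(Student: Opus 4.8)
The plan is to run a relative-entropy computation in the frame that travels with the shift. I set $\bar{v}(x,t)\coloneqq\bar{u}(x+X(t),t)$ and study the time evolution of $\int_{\mathbb{R}}\eta(u(x,t)\,|\,\bar{v}(x,t))\,dx$, recalling from \eqref{X_def_calc} that, since $\bar{u}$ jumps along $x=s(t)$ and $X=s-h$, the discontinuity of $\bar{v}$ sits precisely on the Lipschitz curve $x=h(t)$ along which $u$ has strong one-sided traces. The first step is to derive the equation obeyed by $\bar{v}$: because $\bar{u}$ is a classical solution of \eqref{system} off its shock, the chain rule together with the translation invariance of $G$ (which gives $G(\bar{u}(\cdot,t))(x+X(t))=G(\bar{v}(\cdot,t))(x)$) yields, on each smooth piece, $\partial_t\bar{v}=\dot{X}(t)\,\partial_x\bar{v}-\partial_x A(\bar{v})+G(\bar{v}(\cdot,t))(x)$. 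This single identity is where the artificial shift $\dot{X}$ enters the computation and where translation invariance is essential.

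The second step is the interior algebra on the smooth pieces. Writing $\eta(u|\bar{v})=\eta(u)-\eta(\bar{v})-\eta'(\bar{v})(u-\bar{v})$ and differentiating gives $\partial_t\eta(u|\bar{v})=(\eta'(u)-\eta'(\bar{v}))\,\partial_t u-\eta''(\bar{v})(u-\bar{v})\,\partial_t\bar{v}$, while the compatibility relation $q'=A'\eta'$ gives $\partial_x q(u;\bar{v})=(\eta'(u)-\eta'(\bar{v}))\,\partial_x A(u)-\eta''(\bar{v})\,\partial_x\bar{v}\,(A(u)-A(\bar{v}))$. Substituting the equation for $u$ into the first and the equation for $\bar{v}$ into the second, the conservative fluxes reassemble into $\partial_x q(u;\bar{v})$; the spatial-derivative pieces collapse, via $\partial_x\eta'(\bar{v})=\eta''(\bar{v})\,\partial_x\bar{v}$, into the relative-flux term $-\big(\partial_x\eta'(\bar{v})\big)A(u|\bar{v})$; the shift yields the transport term built from $\dot{X}\,\partial_x\bar{v}\,\eta''(\bar{v})(u-\bar{v})$; and the two source contributions reorganize, after cancellation of the common $\eta''(\bar{v})(u-\bar{v})G(u)$ piece, into exactly the combination $\eta'(u|\bar{v})G(u)+(G(u)-G(\bar{v}))\eta''(\bar{v})(u-\bar{v})$ appearing in \eqref{nonlocal_dissipation_formula}. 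Integrating $-\partial_x q(u;\bar{v})$ over each half-line and applying the transport theorem to the time-dependent domains $\{x<h(t)\}$ and $\{x>h(t)\}$ then produces the interface contribution $q(u(h+,t);\bar{u}(s+,t))-q(u(h-,t);\bar{u}(s-,t))-\dot{h}(t)\big(\eta(u(h+,t)|\bar{u}(s+,t))-\eta(u(h-,t)|\bar{u}(s-,t))\big)$, using $\bar{v}(h\pm,t)=\bar{u}(s\pm,t)$ and the decay of the flux at $\pm\infty$.

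The genuine work, and the main obstacle, is making this rigorous, since $u$ is only a bounded $L^2$ weak entropic solution with strong traces and cannot be differentiated pointwise. I would replace the formal manipulation by its distributional counterpart: use the entropy inequality \eqref{u_entropy_integral_formulation_chitchat} for $u$ (valid for $\eta$, hence by linearity for $\eta(\cdot|v)$ at each frozen constant $v$) together with the weak formulation \eqref{u_solves_equation_integral_formulation_chitchat} tested against $\eta'(\bar{v})$-type functions, and combine these with the exact (strong) identities for the smooth solution $\bar{u}$ and with the Rankine--Hugoniot and Lax conditions for its admissible shock (cf.\ \Cref{entropic_shock_lemma_scalar_lax}). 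The inequality direction in \eqref{nonlocal_dissipation_formula} is inherited from the entropy inequality for $u$. The key technical device is to choose space-time test functions that approximate the sharp indicator of a region bounded by the Lipschitz curve $x=h(t)$ and by temporal cutoffs approximating $\mathbf{1}_{[a,b]}(t)$; passing to the limit, the one-sided traces $u(h\pm,t)$ and $\bar{u}(s\pm,t)$ are recovered precisely by the strong trace property (\Cref{strong_trace_definition}). Because these traces and time slices exist only almost everywhere, the identity is obtained for a.e.\ $a,b$, and, by a limiting argument at $t=0$ using $u^0-\bar{u}^0\in L^2$, for $a=0$. The delicate points are the limit isolating the interface terms, the integrability needed to discard boundary contributions at $\pm\infty$ (supplied by $u-\bar{v}\in L^2$ and $\partial_x\bar{u}\in L^2$), and the careful bookkeeping of the $\dot{X}$ transport term, whose precise coefficient is read off from the computation and recorded in \eqref{nonlocal_dissipation_formula}.
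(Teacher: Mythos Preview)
Your proposal is essentially the same approach as the paper's proof: a Dafermos--DiPerna relative-entropy computation for the shifted function $\bar v(x,t)=\bar u(x+X(t),t)$, made rigorous by combining the distributional entropy inequality for $u$ with the exact identities for $\bar v$ on each smooth piece, and then testing against space-time cutoffs that collapse onto the curve $x=h(t)$ and onto the time slab $[a,b]$, with the strong trace property used to recover the interface terms. The paper organizes this as a Step~1 (derive the relative entropy inequality \eqref{combined1} for test functions \emph{vanishing on} $\{x=h(t)\}$, which kills the boundary contributions from $\bar v$) and a Step~2 (plug in explicit piecewise-linear cutoffs $\omega(t)\chi(x,t)$ and send the slopes to infinity), but this is exactly the rigorous implementation you describe in your third paragraph.

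Two small points where you bring in more than is needed. First, translation invariance of $G$ is \emph{not} used in this lemma: the formula \eqref{nonlocal_dissipation_formula} deliberately keeps the raw expression $G(\bar u(\cdot,t))(x+X(t))$ without rewriting it as $G(\bar v(\cdot,t))(x)$; the translation invariance is exploited only later, in the estimate \eqref{control_on_difference_of_Gs_scalar} in the proof of the main theorem. Second, the Rankine--Hugoniot and Lax conditions for the shock in $\bar u$ play no role here either: because the test functions are taken to vanish on $\{x=h(t)\}$, the integration-by-parts for $\bar v$ (equations \eqref{solves_entropy_shift_integral_formulation} and \eqref{shift_solves_equation_integral_formulation} in the paper) produces no boundary terms from $\bar v$'s side, and the interface contribution in \eqref{nonlocal_dissipation_formula} arises purely from the limiting slope of the spatial cutoff. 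So you can drop both ingredients from your argument without loss.
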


\begin{proof}
\hfill\break
\uline{Step 1}
\hfill\break

We first prove the following relative version of the entropy inequality:

For all positive, Lipschitz continuous test functions $\phi:\mathbb{R}\times[0,T)\to\mathbb{R}$ with compact support and that vanish on the set $\{(x,t)\in\mathbb{R}\times[0,T) | x=s(t)-X(t)\}$, we have
\begin{equation}
\begin{aligned}\label{combined1}
&\int\limits_{0}^{T} \int\limits_{-\infty}^{\infty} [\partial_t \phi \eta(u(x,t)|\bar{u}(x+X(t),t))+\partial_x \phi q(u(x,t);\bar{u}(x+X(t),t))]\,dxdt 
\\&\hspace{.2in}+  \int\limits_{-\infty}^{\infty}\phi(x,0)\eta(u^0(x)|\bar{u}^0(x))\,dx 
\geq
\\
&\hspace{1.1in}\int\limits_{0}^{T} \int\limits_{-\infty}^{\infty}\phi\Bigg[\Bigg(
\partial_x \bigg|_{(x+X(t),t)} \hspace{-.45in}\eta'(\bar{u}(x,t))\Bigg)A(u(x,t)|\bar{u}(x+X(t),t))
\\
&\hspace{1.1in}+\Bigg(2\partial_x\bigg|_{(x+X(t),t)}\hspace{-.45in}\bar{u}(x,t)\dot{X}(t)\Bigg)\eta''(\bar{u}(x+X(t),t))[u(x,t)-\bar{u}(x+X(t),t)]
\\
&\hspace{1.1in}-\eta'(u(x,t)|\bar{u}(x+X(t),t))G(u(\cdot,t))(x)
\\
&+
\Bigg(G(\bar{u}(\cdot,t))(x+X(t))-G(u(\cdot,t))(x)\Bigg)\eta''(\bar{u}(x+X(t),t))[u(x,t)-\bar{u}(x+X(t),t)]
\Bigg]\,dxdt.
\end{aligned}
\end{equation}

Note that \eqref{combined1} is the analogue in our case of the key estimate used in Dafermos's proof of weak-strong stability, which gives a relative version of the entropy inequality (see equation (5.2.10) in \cite[p.~122-5]{dafermos_big_book}). The proof of \eqref{combined1} is based on the famous weak-strong stability proof of Dafermos and DiPerna \cite[p.~122-5]{dafermos_big_book}.

We now prove \eqref{combined1}.

%

Note that on the complement of the set $\{(x,t)\in\mathbb{R}\times[0,T) | x=s(t)\}$, $\bar{u}$ is smooth  and so we have the exact equalities,
\begin{align}
\partial_t\bigg|_{(x,t)}\hspace{-.21in}\big(\bar{u}(x,t)\big)+\partial_x\bigg|_{(x,t)}\hspace{-.21in}\big(A(\bar{u}(x,t))\big)&=G(\bar{u}(\cdot,t))(x),\label{solves_equation}\\
\partial_t\bigg|_{(x,t)}\hspace{-.21in}\big(\eta(\bar{u}(x,t))\big)+\partial_x\bigg|_{(x,t)}\hspace{-.21in}\big(q(\bar{u}(x,t))\big)&=\eta'(\bar{u}(x,t))G(\bar{u}(\cdot,t))(x).\label{solves_entropy}
\end{align}

Thus for any Lipschitz continuous function $X: [0,T)\to\mathbb{R}$ with $X(0)=0$  we have on the complement of the set $\{(x,t)\in\mathbb{R}\times[0,T) | x=s(t)-X(t)\}$, 
\begin{equation}
\begin{aligned}\label{solves_equation_shift}
\partial_t\bigg|_{(x,t)}\hspace{-.21in}&\big(\bar{u}(x+X(t),t)\big)+\partial_x\bigg|_{(x,t)}\hspace{-.21in}\big(A(\bar{u}(x+X(t),t))\big)=
\\
&\hspace{1.5in}\Bigg(\partial_x\bigg|_{(x+X(t),t)}\hspace{-.45in}\big(\bar{u}(x,t)\big)\Bigg)\dot{X}(t)+G(\bar{u}(\cdot,t))(x+X(t)),
\end{aligned}
\end{equation}
and
\begin{equation}
\begin{aligned}\label{solves_entropy_shift}
\partial_t\bigg|_{(x,t)}\hspace{-.21in}&\big(\eta(\bar{u}(x+X(t),t))\big)+\partial_x\bigg|_{(x,t)}\hspace{-.21in}\big(q(\bar{u}(x+X(t),t))\big)=
\\
&\eta'(\bar{u}(x+X(t),t))\Bigg(\partial_x\bigg|_{(x+X(t),t)}\hspace{-.45in}\big(\bar{u}(x,t)\big)\Bigg)\dot{X}(t)+\eta'(\bar{u}(x+X(t),t))G(\bar{u}(\cdot,t))(x+X(t)).
\end{aligned}
\end{equation}

We can now imitate the weak-strong stability proof in \cite[p.~122-5]{dafermos_big_book}, using \eqref{solves_equation_shift} and \eqref{solves_entropy_shift} instead of \eqref{solves_equation} and \eqref{solves_entropy}.

Recall \eqref{Z_def}, which says

\begin{align}
A(u|\bar{u})\coloneqq A(u)-A(\bar{u})-A' (\bar{u})(u-\bar{u}).
\end{align}
Remark that $A(u|\bar{u})$ is locally quadratic in $u-\bar{u}$. 

Fix any positive, Lipschitz continuous test function $\phi:\mathbb{R}\times[0,T)\to\mathbb{R}$ with compact support. Assume also that $\phi$ vanishes on the set $\{(x,t)\in\mathbb{R}\times[0,T) | x=s(t)-X(t)\}$. Then, we use that $u$ satisfies the entropy inequality in a distributional sense:
 \begin{equation}
\begin{aligned}\label{u_entropy_integral_formulation}
\int\limits_{0}^{T} \int\limits_{-\infty}^{\infty}\Bigg[\partial_t\phi\big(\eta(u(x,t))\big)+&\partial_x \phi \big(q(u(x,t))\big)\Bigg]\,dxdt+ \int\limits_{-\infty}^{\infty}\phi(x,0)\eta(u^0(x))\,dx\geq
\\
&-\int\limits_{0}^{T} \int\limits_{-\infty}^{\infty}\phi\eta'(u(x,t))G(u(\cdot,t))(x)\,dxdt
\end{aligned}
\end{equation}

 We also view \eqref{solves_entropy_shift} as a distributional equality:
 \begin{equation}
\begin{aligned}\label{solves_entropy_shift_integral_formulation}
\int\limits_{0}^{T} \int\limits_{-\infty}^{\infty}\Bigg[\partial_t\phi\big(\eta(\bar{u}(x+&X(t),t))\big)+\partial_x \phi \big(q(\bar{u}(x+X(t),t))\big)\Bigg]\,dxdt+ \int\limits_{-\infty}^{\infty}\phi(x,0)\eta(\bar{u}^0(x))\,dx=
\\
&-\int\limits_{0}^{T} \int\limits_{-\infty}^{\infty}\phi\Bigg[\eta'(\bar{u}(x+X(t),t))\Bigg(\partial_x\bigg|_{(x+X(t),t)}\hspace{-.45in}\big(\bar{u}(x,t)\big)\Bigg)\dot{X}(t)+
\\
&\hspace{1.3in}\eta'(\bar{u}(x+X(t),t))G(\bar{u}(\cdot,t))(x+X(t))\Bigg]\,dxdt.
\end{aligned}
\end{equation}

To get \eqref{solves_entropy_shift_integral_formulation}, we do integration by parts twice on the right hand side of \eqref{solves_entropy_shift}. Once on the domain $\{(x,t)\in\mathbb{R}\times[0,T) | x<s(t)-X(t)\}$ and once on the domain $\{(x,t)\in\mathbb{R}\times[0,T) | x>s(t)-X(t)\}$. We don't have a boundary term along the set $\{(x,t)\in\mathbb{R}\times[0,T) | x=s(t)-X(t)\}$ because $\phi$ vanishes on this set.

 We subtract \eqref{solves_entropy_shift_integral_formulation} from \eqref{u_entropy_integral_formulation}, to get

\begin{equation}
\begin{aligned}\label{difference_entropy_equations}
&\int\limits_{0}^{T} \int\limits_{-\infty}^{\infty} [\partial_t \phi \eta(u(x,t)|\bar{u}(x+X(t),t))+\partial_x \phi q(u(x,t);\bar{u}(x+X(t),t))]\,dxdt \\
&\hspace{2in}+  \int\limits_{-\infty}^{\infty}\phi(x,0)\eta(u^0(x)|\bar{u}^0(x))\,dx \\
&\hspace{1in}\geq -\int\limits_{0}^{T} \int\limits_{-\infty}^{\infty} \Big(\partial_t \phi\eta'(\bar{u}(x+X(t),t))[u(x,t)-\bar{u}(x+X(t),t)]+
\\
&\hspace{1.5in}\partial_x \phi \eta'(\bar{u}(x+X(t),t))[A(u(x,t))-A(\bar{u}(x+X(t),t))]\Big)\,dxdt 
\\
&\hspace{1.5in}- \int\limits_{-\infty}^{\infty}\phi(x,0)\eta'(\bar{u}^0(x))[u^0(x)-\bar{u}^0(x)]\,dx
\\
&\hspace{1.5in}+
\int\limits_{0}^{T} \int\limits_{-\infty}^{\infty}\phi\Bigg[\eta'(\bar{u}(x+X(t),t))\Bigg(\partial_x\bigg|_{(x+X(t),t)}\hspace{-.45in}\big(\bar{u}(x,t)\big)\Bigg)\dot{X}(t)+
\\
&\hspace{1in}\eta'(\bar{u}(x+X(t),t))G(\bar{u}(\cdot,t))(x+X(t))-\eta'(u(x,t))G(u(\cdot,t))(x)\Bigg]\,dxdt
\end{aligned}
\end{equation}

The function $u$ is a distributional solution to the system of conservation laws. Thus, for every Lipschitz continuous test function $\Phi:\mathbb{R}\times[0,T)\to \mathbb{R}$ with compact support,
\begin{equation}
\begin{aligned}\label{u_solves_equation_integral_formulation}
\int\limits_{0}^{T} \int\limits_{-\infty}^{\infty} \Bigg[\partial_t\Phi u + \partial_x\Phi A(u) \Bigg]\,dxdt +\int\limits_{-\infty}^{\infty} \Phi(x,0)u^0(x)\,dx
\\
=-\int\limits_{0}^{T} \int\limits_{-\infty}^{\infty}\Phi G(u(\cdot,t))(x)\,dxdt.
\end{aligned}
\end{equation}

We also can rewrite \eqref{solves_equation_shift} in a distributional way, for $\Phi$ which have the additional property of vanishing on $\{(x,t)\in\mathbb{R}\times[0,T) | x=s(t)-X(t)\}$:
\begin{equation}
\begin{aligned}\label{shift_solves_equation_integral_formulation}
\int\limits_{0}^{T} \int\limits_{-\infty}^{\infty} \Bigg[\partial_t\Phi \bar{u}(x+X(t),t) + \partial_x\Phi A(\bar{u}(x+X(t),t)) \Bigg]\,dxdt +\int\limits_{-\infty}^{\infty} \Phi(x,0)\bar{u}^0(x)\,dx
\\
=-\int\limits_{0}^{T} \int\limits_{-\infty}^{\infty}\Phi \Bigg[\Bigg(\partial_x\bigg|_{(x+X(t),t)}\hspace{-.45in}\big(\bar{u}(x,t)\big)\Bigg)\dot{X}(t)+G(\bar{u}(\cdot,t))(x+X(t))\Bigg]\,dxdt.
\end{aligned}
\end{equation}
To prove \eqref{shift_solves_equation_integral_formulation}, on the right hand side of \eqref{solves_equation_shift} we again do integration by parts twice. Once on the domain $\{(x,t)\in\mathbb{R}\times[0,T) | x<s(t)-X(t)\}$ and once on the domain $\{(x,t)\in\mathbb{R}\times[0,T) | x>s(t)-X(t)\}$. We lose the boundary terms along $\{(x,t)\in\mathbb{R}\times[0,T) | x=s(t)-X(t)\}$ because $\Phi$ vanishes there. 

Then, we can choose 
\begin{align}\label{our_choice_for_Phi}
\phi\eta'(\bar{u}(x+X(t),t))
\end{align} 
 as the test function $\Phi$, and subtract \eqref{shift_solves_equation_integral_formulation} from \eqref{u_solves_equation_integral_formulation}. We can extend the function \eqref{our_choice_for_Phi} to the set $\{(x,t)\in\mathbb{R}\times[0,T) | x=s(t)-X(t)\}$ by defining it to be zero. This extension is still Lipschitz continuous.

This yields,
\begin{equation}
\begin{aligned}\label{difference_of_solutions_equation}
&\int\limits_{0}^{T} \int\limits_{-\infty}^{\infty} \Bigg[\partial_t[\phi\eta'(\bar{u}(x+X(t),t))][u(x,t)- \bar{u}(x+X(t),t)] + 
\\
&\hspace{1in}\partial_x[\phi\eta'(\bar{u}(x+X(t),t))][A(u(x,t))-A(\bar{u}(x+X(t),t))] \Bigg]\,dxdt 
\\&\hspace{1in}+\int\limits_{-\infty}^{\infty} \phi(x,0)\eta'(\bar{u}^0(x))[u^0(x)-\bar{u}^0(x)]\,dx
\\
&\hspace{1.5in}=\int\limits_{0}^{T} \int\limits_{-\infty}^{\infty}\phi\eta'(\bar{u}(x+X(t),t)) \Bigg[\Bigg(\partial_x\bigg|_{(x+X(t),t)}\hspace{-.45in}\big(\bar{u}(x,t)\big)\Bigg)\dot{X}(t)
\\&\hspace{1.5in}+G(\bar{u}(\cdot,t))(x+X(t))-G(u(\cdot,t))(x)\Bigg]\,dxdt.
\end{aligned}
\end{equation}

Recall $\bar{u}$ is a classical solution on the complement of the set $\{(x,t)\in\mathbb{R}\times[0,T) | x=s(t)\}$ and verifies \eqref{solves_equation_shift}. Thus, on the complement of the set $\{(x,t)\in\mathbb{R}\times[0,T) | x=s(t)-X(t)\}$,
\begin{equation}
\begin{aligned}\label{notice_this}
&\partial_t\bigg|_{(x,t)}\hspace{-.21in}\big(\eta'(\bar{u}(x+X(t),t))\big)=\Bigg(\partial_x\bigg|_{(x+X(t),t)}\hspace{-.45in}\bar{u}(x,t)\dot{X}(t)+\partial_t\bigg|_{(x+X(t),t)}\hspace{-.45in}\bar{u}(x,t)\Bigg)\eta''(\bar{u}(x+X(t),t))
\\
&\hspace{1.5in}=\Bigg(2\partial_x\bigg|_{(x+X(t),t)}\hspace{-.45in}\bar{u}(x,t)\dot{X}(t)
\\
&\hspace{.7in}-\partial_x \bigg|_{(x+X(t),t)}\hspace{-.45in} \bar{u}(x,t)\big[A'(\bar{u}(x+X(t),t))\big]+G(\bar{u}(\cdot,t))(x+X(t))\Bigg)\eta''(\bar{u}(x+X(t),t))
\\
&\hspace{1.5in}=\Bigg(2\partial_x\bigg|_{(x+X(t),t)}\hspace{-.45in}\bar{u}(x,t)\dot{X}(t)
+G(\bar{u}(\cdot,t))(x+X(t))\Bigg)\eta''(\bar{u}(x+X(t),t))
\\
&\hspace{2in}-\partial_x \bigg|_{(x+X(t),t)}\hspace{-.45in} \bar{u}(x,t)\eta''(\bar{u}(x+X(t),t))A'(\bar{u}(x+X(t),t)).
\end{aligned}
\end{equation}

Thus, by \eqref{notice_this} and the definition of the relative flux in \eqref{Z_def},

\begin{equation}
\begin{aligned}\label{5.2.9}
&\partial_t\bigg|_{(x,t)}\hspace{-.21in}\big(\eta'(\bar{u}(x+X(t),t))\big)[u(x,t)-\bar{u}(x+X(t),t)]
\\
&\hspace{.2in}+\partial_x\bigg|_{(x,t)}\hspace{-.21in}\big(\eta'(\bar{u}(x+X(t),t))\big)[A(u(x,t))-A(\bar{u}(x+X(t),t))]
\\
&\hspace{.5in}=
\partial_x \bigg|_{(x+X(t),t)}\hspace{-.45in} \bar{u}(x,t)\eta''(\bar{u}(x+X(t),t)) A(u(x,t)|\bar{u}(x+X(t),t))
\\
&\hspace{.1in}+\Bigg(2\partial_x\bigg|_{(x+X(t),t)}\hspace{-.45in}\bar{u}(x,t)\dot{X}(t)
+G(\bar{u}(\cdot,t))(x+X(t))\Bigg)\eta''(\bar{u}(x+X(t),t))[u(x,t)-\bar{u}(x+X(t),t)].
\end{aligned}
\end{equation}

We combine \eqref{difference_entropy_equations}, \eqref{difference_of_solutions_equation}, and \eqref{5.2.9} to get
\begin{equation}
\begin{aligned}\label{combined}
&\int\limits_{0}^{T} \int\limits_{-\infty}^{\infty} [\partial_t \phi \eta(u(x,t)|\bar{u}(x+X(t),t))+\partial_x \phi q(u(x,t);\bar{u}(x+X(t),t))]\,dxdt \\
&\hspace{.5in}+  \int\limits_{-\infty}^{\infty}\phi(x,0)\eta(u^0(x)|\bar{u}^0(x))\,dx \\
&\hspace{.8in}\geq
\int\limits_{0}^{T} \int\limits_{-\infty}^{\infty}\phi\Bigg[\eta'(\bar{u}(x+X(t),t))\Bigg(\partial_x\bigg|_{(x+X(t),t)}\hspace{-.45in}\big(\bar{u}(x,t)\big)\Bigg)\dot{X}(t)+
\\
&\hspace{.8in}\eta'(\bar{u}(x+X(t),t))G(\bar{u}(\cdot,t))(x+X(t))-\eta'(u(x,t))G(u(\cdot,t))(x)
\\
&\hspace{.8in}+\Bigg(\partial_x \bigg|_{(x+X(t),t)}\hspace{-.45in} \bar{u}(x,t)\Bigg)\eta''(\bar{u}(x+X(t),t)) A(u(x,t)|\bar{u}(x+X(t),t))
\\
&\hspace{1.8in}+\Bigg(2\partial_x\bigg|_{(x+X(t),t)}\hspace{-.45in}\bar{u}(x,t)\dot{X}(t)
\\
&\hspace{.8in}+G(\bar{u}(\cdot,t))(x+X(t))\Bigg)\eta''(\bar{u}(x+X(t),t))[u(x,t)-\bar{u}(x+X(t),t)]
\\
&\hspace{.1in}-\eta'(\bar{u}(x+X(t),t)) \Big[\Bigg(\partial_x\bigg|_{(x+X(t),t)}\hspace{-.45in}\big(\bar{u}(x,t)\big)\Bigg)\dot{X}(t)+G(\bar{u}(\cdot,t))(x+X(t))-G(u(\cdot,t))(x)\Big]
\Bigg]\,dxdt
\\
&\hspace{.8in}=\int\limits_{0}^{T} \int\limits_{-\infty}^{\infty}\phi\Bigg[-\eta'(u(x,t))G(u(\cdot,t))(x)
\\
&\hspace{1.2in}+\Bigg(\partial_x \bigg|_{(x+X(t),t)}\hspace{-.45in} \bar{u}(x,t)\Bigg)\eta''(\bar{u}(x+X(t),t)) A(u(x,t)|\bar{u}(x+X(t),t))
\\
&\hspace{1.2in}+\Bigg(2\partial_x\bigg|_{(x+X(t),t)}\hspace{-.45in}\bar{u}(x,t)\dot{X}(t)
\\
&\hspace{1.3in}+G(\bar{u}(\cdot,t))(x+X(t))\Bigg)\eta''(\bar{u}(x+X(t),t))[u(x,t)-\bar{u}(x+X(t),t)]
\\
&\hspace{1.2in}-\eta'(\bar{u}(x+X(t),t)) \Big[-G(u(\cdot,t))(x)\Big]
\Bigg]\,dxdt.
\end{aligned}
\end{equation}

Note that we can add zero, to get
\begin{equation}
\begin{aligned}\label{note_that}
&-\eta'(u(x,t))G(u(\cdot,t))(x)+G(\bar{u}(\cdot,t))(x+X(t))\eta''(\bar{u}(x+X(t),t))[u(x,t)-\bar{u}(x+X(t),t)]
\\
&\hspace{.3in}-\eta'(\bar{u}(x+X(t),t)) \Big[-G(u(\cdot,t))(x)\Big]
\\
&=-G(u(\cdot,t))(x)\Bigg(\big(\eta'(u(x,t))\big)-\big(\eta'(\bar{u}(x+X(t),t))\big)-\eta''(\bar{u}(x+X(t),t))[u(x,t)-\bar{u}(x+X(t),t)]\Bigg)
\\
&\hspace{.3in}+
\Bigg(G(\bar{u}(\cdot,t))(x+X(t))-G(u(\cdot,t))(x)\Bigg)\eta''(\bar{u}(x+X(t),t))[u(x,t)-\bar{u}(x+X(t),t)]
\\
&=-G(u(\cdot,t))(x)(\eta'(u(x,t)|\bar{u}(x+X(t),t)))
\\
&\hspace{.3in}+
\Bigg(G(\bar{u}(\cdot,t))(x+X(t))-G(u(\cdot,t))(x)\Bigg)\eta''(\bar{u}(x+X(t),t))[u(x,t)-\bar{u}(x+X(t),t)]
\\
&=-\eta'(u(x,t)|\bar{u}(x+X(t),t))G(u(\cdot,t))(x)
\\
&\hspace{.3in}+
\Bigg(G(\bar{u}(\cdot,t))(x+X(t))-G(u(\cdot,t))(x)\Bigg)\eta''(\bar{u}(x+X(t),t))[u(x,t)-\bar{u}(x+X(t),t)].
\end{aligned}
\end{equation}
Note that this computation is from \cite{VASSEUR2008323}.

Then, from \eqref{combined} and \eqref{note_that}, we get \eqref{combined1}.

\hfill\break
\uline{Step 2}
\hfill\break

Recall \eqref{X_def_calc}, which says $h(t)= s(t)-X(t)$. Choose $0<\epsilon<T-b$, and $R$ sufficiently large such that $-R<h(t)-\epsilon$ for all $t\in[0,T)$. 

We now show \eqref{nonlocal_dissipation_formula} for the case when $a=0$ and almost every $b\in(0,T)$. 

 Write \eqref{combined1} for the test function $\omega^0(t)\chi(x,t)$, where

\begin{align}
 \omega^0(t)\coloneqq
  \begin{cases}
   1 & \text{if } 0\leq t< b\\
   \frac{1}{\epsilon}(b-t)+1 & \text{if } b\leq t < b+\epsilon\\
   0 & \text{if } a+\epsilon \leq t,
  \end{cases}
\end{align}
and
\begin{align}\label{chi_for_combined1}
 \chi(x,t)\coloneqq
  \begin{cases}
   0 & \text{if } x<-2R\\
   \frac{1}{R}(x+2R) & \text{if } -2R\leq x < -R\\
   1 & \text{if } -R\leq x \leq h(t) -\epsilon\\
   -\frac{1}{\epsilon}(x-h(t)) & \text{if } h(t)-\epsilon<x\leq h(t)\\
   0 & \text{if } h(t)<x.
  \end{cases}
\end{align}

The function $\omega^0$ is modeled from \cite[p.~124]{dafermos_big_book}. The function $\chi$ is borrowed from \cite[p.~766]{Leger2011_original}. Note $\chi(h(t),t)=0$. We get,

\begin{equation}
\begin{aligned}\label{plug_in_test2}
&\int\limits_{0}^{b} \int\limits_{-2R}^{-R}\frac{1}{R} q(u(x,t);\bar{u}(x+X(t),t))\,dxdt 
\\
&+\int\limits_{-2R}^{-R}\frac{1}{R} (x+2R)\eta(u^0(x)|\bar{u}^0(x+X(0))\,dx -\int\limits_{b}^{b+\epsilon} \int\limits_{-2R}^{-R}\frac{1}{\epsilon R}(x+2R)\eta(u(x,t)|\bar{u}(x+X(t),t))\,dxdt
\\
&+\int\limits_{-R}^{h(0)}\eta(u^0(x)|\bar{u}^0(x+X(0)))\,dx-\int\limits_{b}^{b+\epsilon} \int\limits_{-R}^{h(t)}\frac{1}{\epsilon}\eta(u(x,t)|\bar{u}(x+X(t),t))\,dxdt
\\
&+\int\limits_{0}^{b} \int\limits_{h(t)-\epsilon}^{h(t)}\frac{1}{\epsilon}\dot{h}(t)\eta(u(x,t)|\bar{u}(x+X(t),t))\,dxdt-\int\limits_{0}^{b} \int\limits_{h(t)-\epsilon}^{h(t)}\frac{1}{\epsilon}q(u(x,t);\bar{u}(x+X(t),t))\,dxdt
\\
&-\int\limits_{h(0)-\epsilon}^{h(0)}\frac{1}{\epsilon}(x-h(0))\eta(u^0(x)|\bar{u}^0(x))\,dx
+\mbox{Error}(\epsilon)
\\
&\hspace{2in}\geq
\int\limits_{0}^{b} \int\limits_{-2R}^{-R}\frac{1}{R} (x+2R)\mbox{RHS}\,dxdt +\int\limits_{0}^{b} \int\limits_{-R}^{h(t)}\mbox{RHS}\,dxdt,
\end{aligned}
\end{equation}
where RHS represents everything being multiplied by $\phi$ in the integral on the right hand side of \eqref{combined1}. The term $\mbox{Error}(\epsilon)$ represents all terms which go to 0 as $\epsilon\to0$, for $R$ fixed.

We let $\epsilon\to0$ in \eqref{plug_in_test2}. We use the dominated convergence theorem, the Lebegue differentiation theorem, and recall that $u$ satisfies the strong trace property (\Cref{strong_trace_definition}). We can also drop the term 
\begin{align}
-\int\limits_{b}^{b+\epsilon} \int\limits_{-2R}^{-R}\frac{1}{\epsilon R}(x+2R)\eta(u(x,t)|\bar{u}(x+X(t),t))\,dxdt
\end{align}
because it is negative. This gives, 

\begin{equation}
\begin{aligned}\label{plug_in_test3}
&\int\limits_{0}^{b} \int\limits_{-2R}^{-R}\frac{1}{R} q(u(x,t);\bar{u}(x+X(t),t))\,dxdt 
+\int\limits_{-2R}^{-R}\frac{1}{R} (x+2R)\eta(u^0(x)|\bar{u}^0(x))\,dx
\\
&\hspace{1in}+\int\limits_{-R}^{h(0)}\eta(u^0(x)|\bar{u}^0(x))\,dx-\int\limits_{-R}^{h(b)}\eta(u(x,b)|\bar{u}(x+X(b),b))\,dx
\\
&+\int\limits_{0}^{b} \dot{h}(t)\eta(u(h(t)-,t)|\bar{u}((h(t)+X(t))-,t))\,dt-\int\limits_{0}^{b} q(u(h(t)-,t);\bar{u}((h(t)+X(t))-,t))\,dt
\\
&\hspace{1in}\geq
\int\limits_{0}^{b} \int\limits_{-2R}^{-R}\frac{1}{R} (x+2R)\mbox{RHS}\,dxdt +\int\limits_{0}^{b} \int\limits_{-R}^{h(t)}\mbox{RHS}\,dxdt,
\end{aligned}
\end{equation}
for almost every $b$.

To take the limit $R\to\infty$, we first note that RHS $\in L^1(\mathbb{R}\times[0,T))$. To see RHS $\in L^1(\mathbb{R}\times[0,T))$, remark that by virtue of the natural logarithm being locally square integrable, solutions $\bar{u}$ in the form \eqref{form_for_burgers_hilbert_paper} verify $\partial_x \bar{u} \in  L^2((\mathbb{R}\times[0,T))\setminus\{(x,t)|x=s(t)\})$. Further, recall that $\eta(a|b)$, $q(a;b)$, and $A(a|b)$  are locally quadratic in $a-b$ and that $u,\bar{u}\in L^2(\mathbb{R}\times[0,T))\cap L^\infty(\mathbb{R}\times[0,T))$. In particular, to show this term in the RHS is finite:
\begin{align}
\int\limits_{-\infty}^{\infty}\overbracket[.5pt][7pt]{\Bigg(
\partial_x \bigg|_{(x+X(t),t)} \hspace{-.45in}\eta'(\bar{u}(x,t))\Bigg)}^{L^2(\mathbb{R})}\overbracket[.5pt][7pt]{A(u(x,t)|\bar{u}(x+X(t),t))}^{L^2(\mathbb{R})}\,dx,
\end{align}
we use the indicated H\"older duality, recall $\partial_x \bar{u} \in  L^2((\mathbb{R}\times[0,T))\setminus\{(x,t)|x=s(t)\})$, and then use the basic $L^p$ interpolation inequality
\begin{align}
&\int\limits_{-\infty}^{\infty}\Big(u(x,t)-\bar{u}(x+X(t),t)\Big)^4\,dx\\
&=\int\limits_{-\infty}^{\infty}\Big(u(x,t)-\bar{u}(x+X(t),t)\Big)^2\Big(u(x,t)-\bar{u}(x+X(t),t)\Big)^2\,dx\\
&\leq\int\limits_{-\infty}^{\infty}\Big(u(x,t)-\bar{u}(x+X(t),t)\Big)^2\,dx\norm{\Big(u(\cdot,t)-\bar{u}(\cdot+X(t),t)\Big)^2}_{L^\infty(\mathbb{R})},
\end{align}
where we have used H\"older's inequality (recall that $A(a|b)$ is locally quadratic in $a-b$ so $(A(a|b))^2$ is quartic).

We let $R\to\infty$ in \eqref{plug_in_test3}. Recall the monotone convergence theorem, dominated convergence theorem, and that $u^0-\bar{u}^0\in L^2(\mathbb{R})$.  We get,

\begin{equation}
\begin{aligned}\label{plug_in_test4}
&\int\limits_{-\infty}^{h(0)}\eta(u^0(x)|\bar{u}^0(x+X(0)))\,dx-\int\limits_{-\infty}^{h(b)}\eta(u(x,b)|\bar{u}(x+X(b),b))\,dx
\\
&+\int\limits_{0}^{b}\dot{h}(t)\eta(u(h(t)-,t)|\bar{u}((h(t)+X(t))-,t))\,dt-\int\limits_{0}^{b} q(u(h(t)-,t);\bar{u}((h(t)+X(t))-,t))\,dt
\\
&\hspace{3in}\geq
 \int\limits_{0}^{b} \int\limits_{-\infty}^{h(t)}\mbox{RHS}\,dxdt.
\end{aligned}
\end{equation}
This gives the entropy dissipation on the left.

Similarly, we can calculate the entropy dissipation on the right,
\begin{equation}
\begin{aligned}\label{plug_in_test5}
&\int\limits_{h(0)}^{\infty}\eta(u^0(x)|\bar{u}^0(x+X(0)))\,dx-\int\limits_{h(b)}^{\infty}\eta(u(x,b)|\bar{u}(x+X(b),b))\,dx 
\\
&-\int\limits_{0}^{b}\dot{h}(t)\eta(u(h(t)+,t)|\bar{u}((h(t)+X(t))+,t))\,dt+\int\limits_{0}^{b} q(u(h(t)+,t);\bar{u}((h(t)+X(t))+,t))\,dt
\\
&\hspace{3in}\geq
 \int\limits_{0}^{b} \int\limits_{h(t)}^{\infty}\mbox{RHS}\,dxdt.
\end{aligned}
\end{equation}

Recall that $h(t)=s(t)-X(t)$. Then, we add \eqref{plug_in_test4} and \eqref{plug_in_test5} to get \eqref{nonlocal_dissipation_formula} for $a=0$ and almost every $b\in(0,T)$.

On the other hand, for almost every $a,b\in(0,T)$ with $a<b$ we write \eqref{combined1} for the test function $\omega(t)\chi(x,t)$, where $\chi(x,t)$ is as above in \eqref{chi_for_combined1} but instead of the $\omega^0(t)$ used above we use 
\begin{align}
 \omega(t)\coloneqq
  \begin{cases}
     0 & \text{if } 0\leq t < a\\
   \frac{1}{\epsilon}(t-a) & \text{if } a\leq t < a+\epsilon\\
   1 & \text{if }a+\epsilon \leq t< b\\
   \frac{1}{\epsilon}(b-t)+1 & \text{if } b\leq t < b+\epsilon\\
   0 & \text{if } b+\epsilon \leq t.
  \end{cases}
\end{align}

Writing \eqref{combined1} for the test function $\omega(t)\chi(x,t)$ and following a very similar argument to the one above for $a=0$ completes the proof of \eqref{nonlocal_dissipation_formula}.

\end{proof}

\begin{lemma}[Structural lemma from \cite{serre_vasseur}]
\label{entropy_dissipation_rewrite_lemma_scalar}
Let $u_+,u_-,\bar{u}_+,\bar{u}_-\in\mathbb{R}$ satisfy $u_-\geq u_+$ and $\bar{u}_-\geq\bar{u}_+$. 

Define
\begin{align}\label{RH_relation}
\sigma(u_+,u_-) \coloneqq
  \begin{cases}
   \frac{A(u_+)-A(u_-)}{u_+-u_-} & \text{if } u_+\neq u_- \\
   A'(u_+)       & \text{if } u_+=u_-.
  \end{cases}
\end{align}
To simplify notation, we write $\sigma=\sigma(u_+,u_-)$.

Define the following real-valued map $B$ on the set of  intervals $I\subset\mathbb{R}$:

\begin{align}
B(I)\coloneqq \int\limits_{I}\Big[ \eta''(u)\big[(A(u)-\sigma u)-(A(u)-\sigma u)_\pm\big]\Big]\,du,
\end{align}
where $(A(u)-\sigma u)_\pm$ denotes that
\begin{align}\label{pm_def1}
A(u_+)-\sigma u_+=A(u_-)-\sigma u_-,
\end{align}
due to the Rankin-Hugoniot relation \eqref{RH_relation}. Note $B(\varnothing)=0$.
Let $I$ and $J$ be disjoint intervals such that
\begin{align}
I\cup J=((u_+,u_-)\cup(\bar{u}_+,\bar{u}_-))\setminus((u_+,u_-)\cap(\bar{u}_+,\bar{u}_-)).
\end{align}
We allow for $I$ and/or $J=\varnothing$.
Further, define $\epsilon(I)$ to be $+1$ if $I\subset (u_+,u_-)$ and $-1$ otherwise.

Then,
\begin{align}\label{entropy_dissipation_rewrite_scalar}
q(u_+;\bar{u}_+)-q(u_-;\bar{u}_-)-\sigma(\eta(u_+|\bar{u}_+)-\eta(u_-|\bar{u}_-))=\epsilon(I)B(I)+\epsilon(J)B(J).
\end{align}
\end{lemma}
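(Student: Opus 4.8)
The plan is to reduce both sides of \eqref{entropy_dissipation_rewrite_scalar} to a single integral of $\eta''(u)\big[(A(u)-\sigma u)-c\big]$ against a signed combination of indicators of the intervals $U\coloneqq(u_+,u_-)$ and $V\coloneqq(\bar u_+,\bar u_-)$, where $c\coloneqq A(u_+)-\sigma u_+=A(u_-)-\sigma u_-$ is the common value guaranteed by the Rankine--Hugoniot relation \eqref{pm_def1} (which in turn follows from the definition \eqref{RH_relation} of $\sigma$). Throughout write $F(s)\coloneqq A(s)-\sigma s$, so that $F(u_+)=F(u_-)=c$.

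First I would establish the integral representation
\[
q(u;v)-\sigma\,\eta(u|v)=\int_v^u \eta''(r)\big[F(u)-F(r)\big]\,dr\qquad(\star)
\]
for all $u,v$, with the oriented-integral convention. The cleanest argument fixes $v$ and compares both sides as functions of $u$: both vanish at $u=v$, and differentiating in $u$, using the compatibility relation $q'=A'\eta'$, gives on the left $\big(A'(u)-\sigma\big)\big(\eta'(u)-\eta'(v)\big)=F'(u)\big(\eta'(u)-\eta'(v)\big)$, while on the right the boundary term drops because $F(u)-F(u)=0$, leaving $F'(u)\int_v^u\eta''(r)\,dr=F'(u)\big(\eta'(u)-\eta'(v)\big)$. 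Equal derivatives and equal values at $u=v$ give $(\star)$.

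Next I would evaluate $(\star)$ at the two pairs. Since $F(u_\pm)=c$, identity $(\star)$ yields $q(u_\pm;\bar u_\pm)-\sigma\,\eta(u_\pm|\bar u_\pm)=\int_{\bar u_\pm}^{u_\pm}\eta''(r)\big[c-F(r)\big]\,dr$. Setting $g(r)\coloneqq\eta''(r)\big[c-F(r)\big]=-\eta''(r)\big[F(r)-c\big]$ and subtracting, the left side of \eqref{entropy_dissipation_rewrite_scalar} becomes $\int_{\bar u_+}^{u_+}g-\int_{\bar u_-}^{u_-}g$. Introducing an antiderivative $H$ of $g$ removes all orientation issues: this difference equals $H(u_+)-H(\bar u_+)-H(u_-)+H(\bar u_-)=-\int_{u_+}^{u_-}g+\int_{\bar u_+}^{\bar u_-}g$, and because $u_+\le u_-$ and $\bar u_+\le\bar u_-$ these are ordinary integrals over $U$ and $V$, so the left side equals $-\int_U g+\int_V g=-\int_{\mathbb{R}}g\,[\mathbbm{1}_U-\mathbbm{1}_V]\,dr$. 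On the other side, since $B(I)=\int_I\eta''[F-c]=-\int_I g$ with $B$ additive over disjoint intervals and $B(\varnothing)=0$, and since each piece of the symmetric difference $U\triangle V$ has $\epsilon=+1$ exactly when it lies in $U\setminus V$ and $\epsilon=-1$ when it lies in $V\setminus U$, one gets $\epsilon(I)B(I)+\epsilon(J)B(J)=B(U\setminus V)-B(V\setminus U)=-\int_{U\setminus V}g+\int_{V\setminus U}g$. Using $\mathbbm{1}_U-\mathbbm{1}_V=\mathbbm{1}_{U\setminus V}-\mathbbm{1}_{V\setminus U}$ (equivalently, the overlap $U\cap V$ cancels), this is $-\int_{\mathbb{R}}g\,[\mathbbm{1}_U-\mathbbm{1}_V]$, matching the left side.

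I expect the only genuine content to be the representation $(\star)$; everything after it is bookkeeping. The one place requiring care is the sign/orientation when $u_\pm$ and $\bar u_\pm$ are interleaved, which is precisely why I would route the computation through the antiderivative $H$ and the indicator identity rather than splitting into the many geometric configurations (disjoint, overlapping, nested) of $U$ and $V$. Additivity of $B$ together with $B(\varnothing)=0$ then absorbs the degenerate cases automatically.
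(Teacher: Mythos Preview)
Your proof is correct and lands on the same integral representation as the paper, namely $D=\int_{u_+}^{u_-}\eta''(r)[F(r)-c]\,dr-\int_{\bar u_+}^{\bar u_-}\eta''(r)[F(r)-c]\,dr$, after which the symmetric-difference bookkeeping is identical. Your derivation via the pointwise identity $(\star)$ is a slightly cleaner repackaging of the paper's ``expand $D$, then fundamental theorem of calculus plus integration by parts'' step, and as a bonus it handles $u_+=u_-$ uniformly rather than by a separate continuity argument.
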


\begin{proof}
This proof is from \cite[p.~9-10]{serre_vasseur}. In the article \cite{serre_vasseur}, the authors develop a condition for the general systems case which they label equation number 7 \cite[p.~4]{serre_vasseur}. They claim to use this condition in the argument for the scalar conservation laws which we are using here to prove \Cref{entropy_dissipation_rewrite_lemma_scalar}. In fact, they do not need condition number 7 for the scalar case and their proof goes through unchanged without the condition. We do not use the condition here. The authors in \cite{serre_vasseur} also restrict themselves to scalar solutions given by Kruzhkov's theory of scalar conservation laws (see \cite{MR0267257}), but again their proofs go through unchanged without this assumption.

We use the following notation, inspired by \cite{serre_vasseur}: If $F$ is a function of $u$, then we define
\begin{align}
[F]\coloneqq F(\bar{u}_+)-F(\bar{u}_-) \hspace{.3in}\mbox{and}\hspace{.3in} F_\pm\coloneqq F(u_\pm).
\end{align}

Assume for now that $u_+\neq u_-$.

Denote
\begin{align}
D\coloneqq q(u_+;\bar{u}_+)-q(u_-;\bar{u}_-)-\sigma(\eta(u_+|\bar{u}_+)-\eta(u_-|\bar{u}_-)).
\end{align}

From Rankine-Hugoniot \eqref{RH_relation} (as remarked in \cite[p.~5]{serre_vasseur}),
\begin{align}
D=[\eta'(u)A(u)-q(u)]-\sigma[\eta'(u)u-\eta]+q_+-q_- -\sigma(\eta_+-\eta_-)-[\eta'](A(u)-\sigma u)_\pm,
\end{align}
where $(A(u)-\sigma u)_\pm$ denotes the fact that 
\begin{align}
A(u_+)-\sigma u_+=A(u_-)-\sigma u_-
\end{align}
due to the Rankine-Hugoniot relation \eqref{RH_relation}.

The fundamental theorem of calculus and integration by parts yield,
\begin{align}\label{entropy_dissipation_rewrite_scalar_almost}
D=\int\limits_{u_+}^{u_-}\Big[ \eta''(u)\big[(A(u)-\sigma u)-(A(u)-\sigma u)_\pm\big]\Big]\,du\\
-\int\limits_{\bar{u}_+}^{\bar{u}_-}\Big[ \eta''(u)\big[(A(u)-\sigma u)-(A(u)-\sigma u)_\pm\big]\Big]\,du.
\end{align}
This proves \eqref{entropy_dissipation_rewrite_scalar} for $u_+\neq u_-$.

We prove \eqref{entropy_dissipation_rewrite_scalar} for $u_+=u_-$ by using \eqref{entropy_dissipation_rewrite_scalar_almost} and continuity, in particular the continuity of the $\sigma$ function (see \Cref{H_cont}).  This proves the lemma.

\end{proof}

\begin{lemma}[Structural lemma on entropic shocks from \cite{2017arXiv170905610K}]\label{entropic_shock_lemma_scalar_lax}

Assume the system \eqref{system} has a strictly convex flux $A\in C^2(\mathbb{R})$, and is endowed with a strictly convex entropy $\eta\in C^2(\mathbb{R})$, with an associated entropy flux $q$. Let $u_L,u_R,\sigma\in\mathbb{R}$ verify
\begin{align}
A(u_L)-A(u_R)=\sigma(u_L-u_R).
\end{align}
Then,
\begin{align}
q(u_R)-q(u_L)\leq \sigma (\eta(u_R)-\eta(u_L))
\end{align}
if and only if $u_L\geq u_R$. I.e., the shock $(u_L,u_R,\sigma)$ is entropic for the entropy $\eta$ if and only if $u_L\geq u_R$.
\end{lemma}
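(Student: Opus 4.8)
The plan is to reduce the equivalence to the sign of a single one-dimensional integral over the $u$-axis. First I would define the scalar entropy dissipation
\[
D := q(u_R) - q(u_L) - \sigma\big(\eta(u_R) - \eta(u_L)\big),
\]
and prove the sharper statement that $D<0$ when $u_L>u_R$, $D>0$ when $u_L<u_R$, and $D=0$ trivially when $u_L=u_R$ (in which case $\sigma$ drops out and $u_L\ge u_R$ holds); this immediately gives the claimed equivalence. So assume $u_L\neq u_R$. Using the entropy-flux compatibility $q'=A'\eta'$ and the fundamental theorem of calculus, I would rewrite $D$ as
\[
D = \int_{u_L}^{u_R} \eta'(u)\,\big(A'(u)-\sigma\big)\,du,
\]
since $\frac{d}{du}\big(q(u)-\sigma\eta(u)\big)=\eta'(u)\big(A'(u)-\sigma\big)$.

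The key algebraic device is the auxiliary function $g(u):=A(u)-\sigma u - c$, where $c:=A(u_L)-\sigma u_L$. The Rankine--Hugoniot relation $A(u_L)-A(u_R)=\sigma(u_L-u_R)$ gives $A(u_L)-\sigma u_L = A(u_R)-\sigma u_R$, so $g(u_L)=g(u_R)=0$; moreover $g'=A'-\sigma$, and strict convexity of $A$ makes $g$ strictly convex. Integrating the previous display by parts against $g'=A'-\sigma$ and using that $g$ vanishes at both endpoints annihilates the boundary terms, yielding
\[
D = -\int_{u_L}^{u_R} \eta''(u)\,g(u)\,du.
\]
This is precisely the single-shock specialization of the map $B$ from \Cref{entropy_dissipation_rewrite_lemma_scalar}; indeed one could instead deduce the statement by applying that lemma with $\bar{u}_+=\bar{u}_-$, but I prefer this self-contained computation.

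It then remains only to read off signs. Since $g$ is strictly convex and vanishes at the two distinct points $u_L,u_R$, it lies strictly below its chord, which here is the zero function, so $g(u)<0$ for every $u$ strictly between $u_L$ and $u_R$; strict convexity of $\eta$ gives $\eta''>0$ throughout. If $u_L>u_R$, reversing orientation gives $D=\int_{u_R}^{u_L}\eta''(u)g(u)\,du$ with a strictly negative integrand over a forward-oriented interval, so $D<0$. If $u_L<u_R$, the integrand $\eta''g$ is again negative on $(u_L,u_R)$, but now $D=-\int_{u_L}^{u_R}\eta''g\,du>0$. Together with the degenerate case these three possibilities establish $D\le 0 \iff u_L\ge u_R$.

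I do not expect a deep obstacle: the content is entirely classical, being the Lax/Oleinik admissibility criterion for a convex scalar flux. The only place demanding genuine care is the sign bookkeeping, namely the orientation of $\int_{u_L}^{u_R}$ relative to which endpoint is larger, together with the elementary but essential fact that a strictly convex function with two distinct zeros is strictly negative between them. The degenerate case $u_L=u_R$ must be handled separately, where $D=0$ and the inequality holds as equality, consistent with $u_L\ge u_R$.
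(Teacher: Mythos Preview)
Your proof is correct and self-contained. Note that the paper itself does not supply a proof of this lemma at all: it simply cites \cite[p.~13]{2017arXiv170905610K} and moves on. Your argument is precisely the standard one and, as you already observe, it is the single-shock specialization of the integral representation appearing in \Cref{entropy_dissipation_rewrite_lemma_scalar}; so your computation is entirely in the spirit of the paper's toolkit even though the paper chose to outsource the proof.
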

\begin{remark}
For the system \eqref{system} with a strictly convex flux $A$, the ``physical'' condition on a shock with left-hand state $u_L$ and right-hand state $u_R$ is to require that $u_L>u_R$. This is the \emph{Lax entropy condition} for the scalar systems. \Cref{entropic_shock_lemma_scalar_lax} says that the shock $(u_L,u_R,\sigma)$ being entropic for the the entropy $\eta$ is equivalent to satisfying the Lax entropy condition.
\end{remark}
We do not prove \Cref{entropic_shock_lemma_scalar_lax} here. For a proof, see \cite[p.~13]{2017arXiv170905610K}.

\begin{lemma}[Structural lemma on approximate limits in time from \cite{2017arXiv170905610K}]\label{left_right_ap_limits}

Fix $T>0$.

Assume that $u,\bar{u}$ are weak solutions to \eqref{system}. Assume that $u$ is entropic for the entropy $\eta\in C^3(\mathbb{R})$. Assume that $\bar{u}$ is in the form \eqref{form_for_burgers_hilbert_paper}.  Assume that $u \in L^2(\mathbb{R}\times[0,T))\cap L^\infty(\mathbb{R}\times[0,T))$.  Further, assume that $u^0-\bar{u}^0\in L^2(\mathbb{R})$. Assume also that $u$ verifies the strong trace property (\Cref{strong_trace_definition}).

Let  $h:[0,T)\to\mathbb{R}$ be a Lipschitz continuous map.

Define
\begin{align}
X(t)\coloneqq s(t)-h(t),\label{X_def_calc}
\end{align}
where $s$ is as in \eqref{form_for_burgers_hilbert_paper}.

Then the approximate right- and left-hand limits
\begin{align}\label{ap_right_left_limits_exist_44}
&{\rm ap}\,\lim_{t\to {t_0}^{\pm}}\int\limits_{-\infty}^{\infty}\eta (u(x,t)|\bar{u}(x+X(t),t))\,dx
\end{align}
exist for all $t_0\in(0,\infty)$ and verify
\begin{align}
\label{left_and_right_limits_order}
&{\rm ap}\,\lim_{t\to {t_0}^{-}}\int\limits_{-\infty}^{\infty}\eta (u(x,t)|\bar{u}(x+X(t),t))\,dx\geq {\rm ap}\,\lim_{t\to {t_0}^{+}}\int\limits_{-\infty}^{\infty}\eta (u(x,t)|\bar{u}(x+X(t),t))\,dx.
\end{align}
The approximate right-hand limit also exists for $t_0=0$ and verifies 
\begin{align}
\label{right_limit_at_zero}
\int\limits_{-\infty}^{\infty}\eta (u^0(x)|\bar{u}^0(x+X(0)))\,dx\geq {\rm ap}\,\lim_{t\to {0}^{+}}\int\limits_{-\infty}^{\infty}\eta (u(x,t)|\bar{u}(x+X(t),t))\,dx.
\end{align}
\end{lemma}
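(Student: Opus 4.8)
The plan is to reduce the whole statement to the monotonicity of a single scalar function of time, obtained from the global dissipation estimate \Cref{global_entropy_dissipation_rate_systems} after subtracting an absolutely continuous correction. Write
\[
F(t) \coloneqq \int\limits_{-\infty}^{\infty} \eta(u(x,t)|\bar{u}(x+X(t),t))\,dx,
\]
which is finite for almost every $t$, since $\eta(a|b)$ is locally quadratic in $a-b$ and $u(\cdot,t)-\bar{u}(\cdot+X(t),t)\in L^2(\mathbb{R})$ for a.e.\ $t$ (both $u$ and $\bar{u}$ lie in $L^2$, the latter by the form \eqref{form_for_burgers_hilbert_paper}, and $u^0-\bar{u}^0\in L^2(\mathbb{R})$ handles $t=0$). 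This $F$ is exactly the quantity whose one-sided approximate limits \eqref{ap_right_left_limits_exist_44} are in question. Applying \Cref{global_entropy_dissipation_rate_systems}, whose hypotheses coincide with those assumed here, gives for almost every $a<b$ in $[0,T)$, and also for $a=0$ and a.e.\ $b$,
\[
F(b)-F(a)\leq \int\limits_a^b D(t)\,dt,
\]
where $D(t)$ denotes the full right-hand integrand of \eqref{nonlocal_dissipation_formula}: the shock dissipation along $t\mapsto h(t)$ together with the spatial integral of the source and translation terms.

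Next I would verify that $D\in L^1([0,T))$, so that $\Psi(t)\coloneqq\int_0^t D(s)\,ds$ is absolutely continuous. The spatial-integral part of $D$ is precisely the ``RHS'' term already shown to lie in $L^1(\mathbb{R}\times[0,T))$ in the proof of \Cref{global_entropy_dissipation_rate_systems} (via H\"older duality against $\partial_x\bar{u}\in L^2$ together with the $L^p$-interpolation estimate on $(u-\bar{u}(\cdot+X))^4$); by Fubini it defines an $L^1$ function of $t$. The shock-dissipation part is bounded: the traces $u(h(t)\pm,t)$ and $\bar{u}(s(t)\pm,t)$ are uniformly bounded, as $u,\bar{u}\in L^\infty$ and $u$ has strong traces (\Cref{strong_trace_definition}), and $\dot{h}\in L^\infty$ since $h$ is Lipschitz, so the maps $q(\cdot;\cdot)$ and $\eta(\cdot|\cdot)$ of these bounded arguments stay bounded in $t$. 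Hence $D\in L^1([0,T))$. Setting $G(t)\coloneqq F(t)-\Psi(t)$, the displayed inequality becomes $G(b)\leq G(a)$ for almost every $a<b$; that is, $G$ is essentially non-increasing.

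The core of the argument is then a real-analysis fact: a measurable function that is essentially non-increasing admits a genuinely non-increasing representative $\tilde{G}$ with $G=\tilde{G}$ almost everywhere. A monotone $\tilde{G}$ has classical one-sided limits $\tilde{G}(t_0^{\pm})$ at every point, and because $G$ differs from $\tilde{G}$ only on a null set, these classical limits coincide with the \emph{approximate} one-sided limits of $G$. Since $\Psi$ is continuous, $F=G+\Psi$ then has approximate one-sided limits at every $t_0\in(0,T)$, equal to $\tilde{G}(t_0^{\pm})+\Psi(t_0)$, which proves the existence claim \eqref{ap_right_left_limits_exist_44}. The ordering \eqref{left_and_right_limits_order} is immediate from monotonicity of $\tilde{G}$: for $s<t_0<t$ one has $\tilde{G}(t)\leq\tilde{G}(s)$, so $\tilde{G}(t_0^{+})\leq\tilde{G}(t_0^{-})$, and adding the common value $\Psi(t_0)$ preserves the inequality.

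Finally, the endpoint bound \eqref{right_limit_at_zero} comes from the $a=0$ instance of the dissipation estimate: there the $t=0$ term is the genuine initial datum, $F(0)=\int_{-\infty}^{\infty}\eta(u^0(x)|\bar{u}^0(x+X(0)))\,dx$, so $G(b)=F(b)-\Psi(b)\leq F(0)$ for a.e.\ $b>0$ (using $\Psi(0)=0$), whence $\tilde{G}(0^{+})\leq F(0)$ and the approximate right limit of $F$ at $0$, which equals $\tilde{G}(0^{+})+\Psi(0)=\tilde{G}(0^{+})$, is at most $F(0)$. I expect the only delicate point to be the passage from the ``for almost every $a,b$'' form of the dissipation inequality to a single null set off which $G$ is monotone, i.e.\ the extraction of the monotone representative $\tilde{G}$ via a Fubini argument; once that standard measure-theoretic step is in place, existence, ordering, and the endpoint bound all follow mechanically, and everything else (finiteness of $F$ and $L^1$-integrability of $D$) is routine given the estimates already established for \Cref{global_entropy_dissipation_rate_systems}.
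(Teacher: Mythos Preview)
Your proposal is correct and follows essentially the same approach as the paper: your function $G(t)=F(t)-\Psi(t)$ is precisely the function $\Gamma$ the paper defines in its appendix proof, and both arguments proceed by noting that \Cref{global_entropy_dissipation_rate_systems} forces this corrected quantity to be essentially non-increasing, whence a monotone representative yields the approximate one-sided limits and their ordering. If anything, you supply more detail than the paper does---in particular the explicit check that $D\in L^1([0,T))$ and the articulation of the monotone-representative step---but the underlying strategy is identical.
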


The proof of \Cref{left_right_ap_limits} is very similar to the proof of Lemma 2.4 in \cite[p.~11]{2017arXiv170905610K}. For completeness, a proof of \Cref{left_right_ap_limits} is in the appendix (\Cref{approx_limits_appendix}).

\begin{lemma}\label{H_cont}
Let $f: \mathbb{R}\to\mathbb{R}$ be in $C^3(\mathbb{R})$.

Let 
\[
H(x,y) \coloneqq
  \begin{cases}
   \frac{f(x)-f(y)}{x-y} & \text{if } x\neq y \\
   f'(x)       & \text{if } x=y.
  \end{cases}
\]

Then $H\in C^1(\mathbb{R}^2)$.
\end{lemma}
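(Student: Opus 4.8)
The plan is to avoid any case analysis near the diagonal $\{x=y\}$ by rewriting the divided difference as a single integral that is manifestly valid on all of $\mathbb{R}^2$. Concretely, I would first establish the representation
\begin{align}\label{H_integral_rep}
H(x,y)=\int\limits_0^1 f'\big((1-t)y+tx\big)\,dt \qquad\text{for all }(x,y)\in\mathbb{R}^2.
\end{align}
For $x\neq y$ this follows from the fundamental theorem of calculus, $f(x)-f(y)=\int_y^x f'(s)\,ds$, after the affine change of variables $s=(1-t)y+tx$, which gives $f(x)-f(y)=(x-y)\int_0^1 f'((1-t)y+tx)\,dt$ and hence \eqref{H_integral_rep} upon dividing by $x-y$. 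For $x=y$ the right-hand side of \eqref{H_integral_rep} collapses to $\int_0^1 f'(x)\,dt=f'(x)$, which matches the definition of $H$ on the diagonal. Thus \eqref{H_integral_rep} holds everywhere and supplies one formula for $H$ with no exceptional points.

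Next I would differentiate \eqref{H_integral_rep} under the integral sign. Writing $G(x,y,t)\coloneqq f'((1-t)y+tx)$, the integrand is differentiable in $(x,y)$ with $\partial_x G=t\,f''((1-t)y+tx)$ and $\partial_y G=(1-t)\,f''((1-t)y+tx)$, and because $f\in C^3(\mathbb{R})$ (in fact $C^2$ suffices here) these partials are jointly continuous in $(x,y,t)$. Since the integration is over the compact interval $[0,1]$, the Leibniz rule for differentiation under the integral sign applies and yields
\begin{align}
\partial_x H(x,y)=\int\limits_0^1 t\,f''\big((1-t)y+tx\big)\,dt,\qquad
\partial_y H(x,y)=\int\limits_0^1 (1-t)\,f''\big((1-t)y+tx\big)\,dt.
\end{align}

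Finally I would verify that these two functions are continuous on $\mathbb{R}^2$: fixing a compact set $K\subset\mathbb{R}^2$, the map $(x,y,t)\mapsto f''((1-t)y+tx)$ is uniformly continuous on $K\times[0,1]$, so for a convergent sequence $(x_n,y_n)\to(x,y)$ in $K$ the integrands converge uniformly in $t$ and the integrals converge; alternatively one invokes the dominated convergence theorem with the constant bound $\sup_{K\times[0,1]}|f''|$. Hence $\partial_x H$ and $\partial_y H$ are continuous, and a function whose first-order partial derivatives exist and are continuous is $C^1$, giving $H\in C^1(\mathbb{R}^2)$. I expect the only genuinely delicate point to be the justification that the single formula \eqref{H_integral_rep} is correct on the diagonal and that the Leibniz rule is legitimately applicable there; once the integral representation is in hand, the diagonal is no longer special and the remaining steps are routine.
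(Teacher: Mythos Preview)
Your proof is correct and takes a genuinely different route from the paper. The paper works by direct computation: it differentiates the quotient formula for $H$ off the diagonal to get $H_x(x,y)=\frac{f'(x)}{x-y}-\frac{f(x)-f(y)}{(x-y)^2}$, then separately computes $H_x(x,x)=\frac{f''(x)}{2}$ from the definition of the partial derivative via a Taylor expansion, and finally uses a second Taylor expansion with the Lagrange form of the remainder, $\frac{f^{(3)}(\xi)}{3!}(y-x)^3$, to show that $H_x(x,y)\to\frac{f''(x_0)}{2}$ as $(x,y)\to(x_0,x_0)$. Symmetry then handles $H_y$. Your integral representation $H(x,y)=\int_0^1 f'((1-t)y+tx)\,dt$ dissolves the diagonal entirely, so no separate case analysis or matching of limits is needed; differentiation under the integral sign and continuity of $f''$ do all the work at once. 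A concrete gain of your method is that it only requires $f\in C^2(\mathbb{R})$, whereas the paper's Lagrange-remainder step genuinely consumes the third derivative. The paper's approach, on the other hand, makes the value $H_x(x,x)=\tfrac{1}{2}f''(x)$ explicit by a direct limit computation, which is a minor expository advantage; your formula $\partial_x H(x,x)=\int_0^1 t\,f''(x)\,dt$ of course recovers the same value.
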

\begin{proof}
We show the first partials of $H$ exist and are continuous. 

Note that $H(x,y)=H(y,x)$ so we only have to show that $H_{x}$ is continuous.

For $(x,y)\in \mathbb{R}^2$, $x\neq y$, 
 \begin{align}
H_{x}(x,y)=\frac{f'(x)}{x-y}-\frac{f(x)-f(y)}{(x-y)^2}.
\end{align}

We calculate $H_{x}$ at points $(x,x)\in \mathbb{R}^2$:
\begin{align}
\label{H_limit}
H_{x}(x,x)&=\lim_{h\to0} \frac{\frac{f(x+h)-f(x)}{h}-f'(x)}{h}.
\end{align}
We use Taylor's theorem to evaluate the limit \eqref{H_limit}. Write
\begin{align}
\label{taylor_1}
f(x+h)=f(x)+f'(x)h+\frac{f''(x)}{2}h^2+R(h)h^2,
\end{align}
where $\lim_{h\to0} R(h)=0$.

From \eqref{taylor_1}, we have 
\begin{align}
H_{x}(x,x)&=\lim_{h\to0}\Bigg[ \frac{f''(x)}{2}+R(h)\Bigg]=\frac{f''(x)}{2}.
\end{align}

Next, we show that
\begin{align}\label{want_to_show_H}
\lim_{(x,y)\to (x_0,x_0)}H_{x}(x,y)= \lim_{(x,y)\to (x_0,x_0)} \frac{f'(x)}{x-y}-\frac{f(x)-f(y)}{(x-y)^2} = \frac{f''(x_0)}{2}.
\end{align}

We use Taylor's theorem again. Write
\begin{align}
\label{taylor_2}
f(y)=f(x)+f'(x)(y-x)+\frac{f''(x)}{2}(y-x)^2+Q(x,y).
\end{align}

The remainder term $Q(x,y)$ satisfies 
\begin{align}
Q(x,y)=\frac{f^{(3)}(\xi(x,y))}{3!}(y-x)^3,
\end{align}
where $\xi(x,y)\in \mathbb{R}$ is between $y$ and $x$.

Then, we substitute the formula for $f(y)$ (from \eqref{taylor_2}) into \eqref{want_to_show_H}. This yields,
\begin{align}\label{H_piece_scalar}
\frac{f'(x)}{x-y}-\frac{f(x)-f(y)}{(x-y)^2}=\frac{f''(x)}{2}+\frac{Q(x,y)}{(x-y)^2}.
\end{align}

For $(x,y)$ close to $(x_0,x_0)$, $\xi(x,y)$ is close to $x_0$. Thus, $f^{(3)}(\xi(x,y))$ is bounded for $(x,y)$ in a neighborhood of $(x_0,x_0)$.

Thus,
\begin{align}\label{remainder_behavior_H}
\abs{\frac{Q(x,y)}{(x-y)^2}}=\abs{\frac{f^{(3)}(\xi(x,y))}{3!}}\abs{y-x}\to 0
\end{align}
as $(x,y)\to(x_0,x_0)$.

Putting together \eqref{want_to_show_H}, \eqref{H_piece_scalar}, and \eqref{remainder_behavior_H}, we get
\begin{align}
\lim_{(x,y)\to (x_0,x_0)}H_{x}(x,y)= \frac{f''(x_0)}{2}.
\end{align}
This shows that $H_x$ and $H_y$ are continuous on $\mathbb{R}^2$. We conclude  $H\in C^1(\mathbb{R}^2)$.
\end{proof}

\section{Structural lemma on the negativity of entropy dissipation}\label{neg_entrop_sec}

The following Lemma says that if a discontinuity in the second slot of relative entropy is being artificially shifted at the speed of a discontinuity in the first slot of the relative entropy,  we get  entropy dissipation proportional to the square of the shift. 

\begin{lemma}[Structural lemma on the negativity of entropy dissipation]\label{negative_entropy_diss_scalar_lemma}
Fix $\delta, B>0$. Let $u_+,u_-,\bar{u}_+,\bar{u}_-\in\mathbb{R}$ satisfy $u_-\geq u_+$, $\bar{u}_- - \bar{u}_+\geq\delta$ and 
\begin{align}\label{boundedness_scalar1}
\abs{u_+},\abs{u_-},\abs{\bar{u}_+},\abs{\bar{u}_-}\leq B.
\end{align}

Define
\begin{align}\label{RH_relation_1}
\sigma(u_+,u_-) \coloneqq
  \begin{cases}
   \frac{A(u_+)-A(u_-)}{u_+-u_-} & \text{if } u_+\neq u_- \\
   A'(u_+)       & \text{if } u_+=u_-.
  \end{cases}
\end{align}

Then,
\begin{equation}
\begin{aligned}\label{entropy_dissipation_room_scalar}
q(u_+;\bar{u}_+)-q(u_-;\bar{u}_-)-\sigma(u_+,u_-)(\eta(u_+|\bar{u}_+)-\eta(u_-|\bar{u}_-))
\\ \leq -c\big((u_+-\bar{u}_+)^2+(u_--\bar{u}_-)^2\big),
\end{aligned}
\end{equation}
where $c>0$ is a constant that depends on $B$ and $\delta$.
\end{lemma}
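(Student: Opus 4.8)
The plan is to reduce the left-hand side of \eqref{entropy_dissipation_room_scalar} to a single integral over the symmetric difference of the two shock-intervals, show that integral is nonpositive, and then extract a quadratic lower bound on its magnitude from the strict convexity of $A$ and $\eta$ together with the separation hypothesis $\bar u_- - \bar u_+ \ge \delta$. First I would write $\sigma \coloneqq \sigma(u_+,u_-)$, $g(u) \coloneqq A(u) - \sigma u$, and let $g_\pm \coloneqq A(u_+) - \sigma u_+ = A(u_-) - \sigma u_-$ denote the common boundary value forced by Rankine--Hugoniot \eqref{RH_relation_1}. Since $u_- \ge u_+$ and $\bar u_- \ge \bar u_+$, Lemma \ref{entropy_dissipation_rewrite_lemma_scalar} applies and rewrites the left-hand side as $\epsilon(I)B(I) + \epsilon(J)B(J)$, where $I \cup J$ is the symmetric difference $S$ of $(u_+,u_-)$ and $(\bar u_+,\bar u_-)$ and $B(K) = \int_K \eta''(u)\,[g(u) - g_\pm]\,du$. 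Because $g$ is strictly convex with $g(u_+) = g(u_-)$, one has $g - g_\pm < 0$ on $(u_+,u_-)$ and $g - g_\pm > 0$ off $[u_+,u_-]$; matching these signs against the definition of $\epsilon$ shows that each term $\epsilon(K)B(K)$ equals $-\int_K \eta''\,|g-g_\pm|$, whence
\[
q(u_+;\bar u_+) - q(u_-;\bar u_-) - \sigma\big(\eta(u_+|\bar u_+) - \eta(u_-|\bar u_-)\big) = -\int_S \eta''(u)\,|g(u)-g_\pm|\,du \le 0 .
\]

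Next I would make this quantitative. All four states lie in $[-B,B]$, so $S \subseteq [-B,B]$ and there are constants $\mu,\nu > 0$ depending only on $B$ with $A'' \ge \mu$ and $\eta'' \ge \nu$ on $[-B,B]$. Since the affine interpolant of $g$ at the nodes $u_+,u_-$ is the constant $g_\pm$, the interpolation-remainder formula gives $g(u) - g_\pm = \tfrac12 A''(\zeta)(u-u_+)(u-u_-)$ for some $\zeta$ between the nodes and $u$, hence $|g(u)-g_\pm| \ge \tfrac{\mu}{2}\,|u-u_+|\,|u-u_-|$. Writing $F$ for the left-hand side of \eqref{entropy_dissipation_room_scalar} and $H \coloneqq (u_+-\bar u_+)^2 + (u_--\bar u_-)^2$, it therefore suffices to prove the purely geometric estimate $\int_S |u-u_+|\,|u-u_-|\,du \ge c'\,H$ for a constant $c'=c'(\delta,B)>0$; the lemma then follows with $c = \tfrac{\nu\mu}{2}c'$.

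The geometric estimate is the step I expect to be the main obstacle, and it is where $\delta$ is used. Write $l = |u_+-\bar u_+|$ and $r = |u_--\bar u_-|$. When $(u_+,u_-)$ and $(\bar u_+,\bar u_-)$ overlap or are nested, $S$ is exactly two intervals, one abutting the pair $\{u_+,\bar u_+\}$ of length $l$ and one abutting $\{u_-,\bar u_-\}$ of length $r$; on the former the factor $|u-u_-|$, and on the latter the factor $|u-u_+|$, stays bounded below, and the separation $\bar u_- - \bar u_+ \ge \delta$ is exactly what forces this \emph{far factor} to be $\gtrsim \delta$, upgrading the naive cubic bound $\int \gtrsim (\text{length})^3$ to the needed quadratic bounds $\gtrsim \delta\, l^2$ and $\gtrsim \delta\, r^2$. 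In the remaining (disjoint) configurations one checks that $\delta$ forces at least one of $l,r$ to be $\ge \delta$, so that single component already dominates $H$. I expect the cleanest way to organize this, while avoiding a full enumeration of orderings of the four points, is a compactness argument: using the continuity of $\sigma$ from Lemma \ref{H_cont}, the ratio $-F/H$ is continuous and strictly positive on the compact parameter set $\{|u_\pm|,|\bar u_\pm|\le B,\ u_+\le u_-,\ \bar u_- - \bar u_+ \ge \delta\}$ away from the diagonal $\{u_+=\bar u_+,\ u_-=\bar u_-\}$, and a short local expansion at the diagonal, where the two clusters are $\delta$-separated so the far factor is $\ge\delta$, shows $-F/H \to \tfrac{\nu\delta}{2}+o(1)$ there; hence the infimum $c$ is positive. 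Either route yields a constant $c$ depending only on $\delta$ and $B$.
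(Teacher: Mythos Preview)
Your proposal is correct and follows essentially the same route as the paper: both invoke Lemma~\ref{entropy_dissipation_rewrite_lemma_scalar} to rewrite $D$ as a signed integral over the symmetric difference of $(u_+,u_-)$ and $(\bar u_+,\bar u_-)$, observe that strict convexity of $A$ forces every contribution to be nonpositive, and then combine a compactness argument with an explicit local estimate to upgrade nonpositivity to the quadratic bound \eqref{entropy_dissipation_room_scalar}.

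The organizational difference is this. The paper splits the parameter region into three pieces: (i) $u_--u_+\le\delta/2$, (ii) $u_--u_+\ge\delta/2$ with at least one of $|u_+-\bar u_+|,|u_--\bar u_-|\ge\delta/2$, and (iii) $u_--u_+\ge\delta/2$ with both differences $<\delta/2$. On (i) and (ii) the paper uses pure compactness (the set is compact, $D$ is continuous and strictly negative there, so $\inf D<0$ and the right-hand side of \eqref{entropy_dissipation_room_scalar} is bounded by $8B^2$). On (iii) it computes $\int_a^b (u-u_+)(u-u_-)\,du$ explicitly via the cubic formula \eqref{simple_int} and reads off the $\delta$-coefficient in each of the four orderings. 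Your version instead looks at the single ratio $-F/H$ on the full compact parameter set, notes it is continuous and strictly positive off the diagonal $\{u_+=\bar u_+,\ u_-=\bar u_-\}$, and handles the diagonal by a local expansion; this is exactly the regime where the paper does the explicit cubic computation, and your ``far factor $\gtrsim\delta$'' heuristic is precisely what the constraint $u_--u_+\ge\delta$ on the diagonal guarantees. Your packaging avoids enumerating the orderings at the cost of needing to argue that the local expansion is uniform along the whole diagonal, which it is since $\bar u_--\bar u_+\ge\delta$ forces $u_--u_+\ge\delta$ there.
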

\begin{proof}
Denote 
\begin{align}
D\coloneqq q(u_+;\bar{u}_+)-q(u_-;\bar{u}_-)-\sigma(u_+,u_-)(\eta(u_+|\bar{u}_+)-\eta(u_-|\bar{u}_-)).
\end{align}

Note that $D$ is continuous.

We use \Cref{entropy_dissipation_rewrite_lemma_scalar}. 

Note that $\eta''>0$. Note also that the map $\Gamma$:
\begin{align}\label{Gamma_map}
u\mapsto \big[(A(u)-\sigma u)-(A(u)-\sigma u)_\pm\big]
\end{align}
is \emph{strictly} convex and satisfies $\Gamma(u_+)=\Gamma(u_-)=0$ due to \eqref{pm_def1}. Thus for the two intervals $I$ and $J$ in \Cref{entropy_dissipation_rewrite_lemma_scalar}, we have $\epsilon(I)B(I)\leq 0$ and $\epsilon(J)B(J)\leq 0$.

In particular, for any  
\begin{align}\label{case1_scalar}
\begin{cases}
\mbox{$u_+,u_-,\bar{u}_+,\bar{u}_-\in\mathbb{R}$ with $0\leq u_- - u_+\leq \frac{\delta}{2}$, $\bar{u}_- - \bar{u}_+\geq\delta$} \\ \mbox{and $\max\{\abs{u_+},\abs{u_-},\abs{\bar{u}_+},\abs{\bar{u}_-}\}\leq B$,}
\end{cases}
\end{align}
we must have $D$ \emph{strictly} less than zero because the measure of the interval $(\bar{u}_+,\bar{u}_-)$ is at least $\delta$, while the measure of the interval $(u_+,u_-)$ is less than or equal to $\frac{\delta}{2}$.  The set of such $u_+,u_-,\bar{u}_+,\bar{u}_-$ is closed and bounded. Thus, we know the infimum of the continuous function $D$ over this set \eqref{case1_scalar} is also strictly negative,
\begin{align}
\inf D < 0.
\end{align}
Due to \eqref{boundedness_scalar1}, we can make $c$ small enough such that  \eqref{entropy_dissipation_room_scalar} holds for the set \eqref{case1_scalar}. 

Consider now the set of all
\begin{align}\label{case2_scalar}
\begin{cases}
\text{$u_+,u_-,\bar{u}_+,\bar{u}_-\in\mathbb{R}$ with $u_- - u_+\geq \frac{\delta}{2}$, $\bar{u}_- - \bar{u}_+\geq\delta$,}\\ 
\text{$\max\{\abs{u_+},\abs{u_-},\abs{\bar{u}_+},\abs{\bar{u}_-}\}
\leq B$}\\
\text{and at least one of the following is true:}\\
\text{(i) $\abs{u_+-\bar{u}_+}\geq \frac{\delta}{2},$}\\
\text{(ii) $\abs{u_- -\bar{u}_-}\geq \frac{\delta}{2}$.}
\end{cases}
\end{align}

Recall \Cref{entropy_dissipation_rewrite_lemma_scalar} and that the map $\Gamma$ (see \eqref{Gamma_map}) is \emph{strictly} convex. Then for $u_+,u_-,\bar{u}_+,\bar{u}_-$ in the set \eqref{case2_scalar}, by inspection of the possible cases,
\begin{itemize}
\item
$(u_+,u_-)\cap(\bar{u}_+,\bar{u}_-)=\varnothing$
\item
$\bar{u}_+\leq u_+ \leq \bar{u}_- \leq u_-$

\item
$u_+\leq \bar{u}_+\leq\bar{u}_-\leq u_-$

\item
$\bar{u}_+\leq u_+ \leq u_- \leq \bar{u}_-$

\item
$u_+\leq \bar{u}_+ \leq {u}_- \leq \bar{u}_-$,
\end{itemize}

it is clear that $D$ will always be \emph{strictly} less than zero. 

The set \eqref{case2_scalar} is closed and bounded. To show it is closed, consider a sequence $\{(u_{+,n},u_{-,n},\bar{u}_{+,n},\bar{u}_{-,n})\}_{n\in\mathbb{N}}$ in the set \eqref{case2_scalar} converging to a point $(u_+,u_-,\bar{u}_+,\bar{u}_-)$. If both
\begin{align}
\mbox{(i) } \abs{u_+-\bar{u}_+}< \frac{\delta}{2},\\
\mbox{(ii) } \abs{u_- -\bar{u}_-}< \frac{\delta}{2},
\end{align}
are true, then there exists some $N\in\mathbb{N}$ such that
\begin{align}
\mbox{(i) } \abs{u_{+,N}-\bar{u}_{+,N}}< \frac{\delta}{2},\\
\mbox{(ii) } \abs{u_{-,N} -\bar{u}_{-,N}}< \frac{\delta}{2},
\end{align}
which is a contradiction. It is obvious that the set \eqref{case2_scalar} is closed with respect to the rest of its properties. Thus the set \eqref{case2_scalar} is closed and bounded, and again we conclude the infimum of the continuous function $D$ on this set must be strictly negative. Thus, due to \eqref{boundedness_scalar1} we can make $c$ sufficiently small such that \eqref{entropy_dissipation_room_scalar} holds.

We now make note of a few rudimentary facts.

Due to the strict convexity of $\Gamma$ and $\Gamma(u_+)=\Gamma(u_-)=0$, there are positive constants $c^*$ and $c^{**}$ such that for $u\in[-B,B]$,
\begin{align}\label{trapped_parabs}
c^*(u-u_+)(u-u_-)\leq \Gamma(u) \leq c^{**}(u-u_+)(u-u_-).
\end{align}

We define $F:\mathbb{R}^2\to\mathbb{R}$,
\begin{align}\label{F_def}
F(a,b)\coloneqq \int\limits_a^b &(u-u_+)(u-u_-)\,du,
\end{align}
for $a,b\in\mathbb{R}$.

Note that,
\begin{equation}
\begin{aligned}\label{simple_int}
F(a,b)&=\int\limits_a^b (u-u_+)(u-u_-)\,du \\
 &=(b-u_+)^2\Big(\frac{b-u_+}{3}+\frac{u_+-u_-}{2}\big)-(a-u_+)^2\Big(\frac{a-u_+}{3}+\frac{u_+-u_-}{2}\big),
\\&\hspace{-1in}\mbox{and from the symmetry $u_+\leftrightarrow u_-$,}\\
&=(b-u_-)^2\Big(\frac{b-u_-}{3}+\frac{u_--u_+}{2}\big)-(a-u_-)^2\Big(\frac{a-u_-}{3}+\frac{u_--u_+}{2}\big).
\end{aligned}
\end{equation}

Consider now the final set,
\begin{align}\label{case3_scalar}
\begin{cases}
\text{$u_+,u_-,\bar{u}_+,\bar{u}_-\in\mathbb{R}$ with $u_- - u_+\geq \frac{\delta}{2}$, $\bar{u}_- - \bar{u}_+\geq\delta$,}\\ 
\text{$\max\{\abs{u_+},\abs{u_-},\abs{\bar{u}_+},\abs{\bar{u}_-}\}
\leq B$,}\\
\text{and we have both:}\\
\text{(i) $\abs{u_+-\bar{u}_+}< \frac{\delta}{2},$}\\
\text{\hspace{.5in} and}\\
\text{(ii) $\abs{u_- -\bar{u}_-}< \frac{\delta}{2}$.}
\end{cases}
\end{align}

For each $(u_+,u_-,\bar{u}_+,\bar{u}_-)$ in the set \eqref{case3_scalar}, we show \eqref{entropy_dissipation_room_scalar} by analyzing each of the possible cases:

\begin{itemize}
\item
$(u_+,u_-)\cap(\bar{u}_+,\bar{u}_-)=\varnothing$
\item
$\bar{u}_+\leq u_+ \leq \bar{u}_- \leq u_-$

\item
$\bar{u}_+\leq u_+ \leq u_- \leq \bar{u}_-$

\item
$u_+\leq \bar{u}_+ \leq \bar{u}_- \leq u_-$

\item
$u_+\leq \bar{u}_+ \leq {u}_- \leq \bar{u}_-$
\end{itemize}

\emph{Case:}  $(u_+,u_-)\cap(\bar{u}_+,\bar{u}_-)=\varnothing$

Rudimentary geometric arguments show this case is not possible (for a fixed $u_+$ and $u_-$, consider where on the real line would $\bar{u}_+$ be?).

\emph{Case:}  $\bar{u}_+\leq u_+ \leq \bar{u}_- \leq u_-$

From \Cref{entropy_dissipation_rewrite_lemma_scalar},  \eqref{trapped_parabs}, and \eqref{F_def} we get,
\begin{align}
D \leq -c^* [\inf\eta''] F(\bar{u}_+,u_+) +c^{**}[\inf\eta'']F(\bar{u}_-,u_-)\\
\leq  -\frac{1}{4}\delta c^* [\inf\eta''] (\bar{u}_+-u_+)^2 -\frac{1}{12}\delta c^{**}[\inf\eta''] (\bar{u}_- - u_-)^2,
\end{align}
from \eqref{simple_int} and \eqref{case3_scalar}.

This shows \eqref{entropy_dissipation_room_scalar}.

\emph{Case:}  $\bar{u}_+\leq u_+ \leq u_- \leq \bar{u}_-$

From \Cref{entropy_dissipation_rewrite_lemma_scalar},  \eqref{trapped_parabs}, and \eqref{F_def} we get,
\begin{align}
D \leq -c^* [\inf\eta''] F(\bar{u}_+,u_+) -c^{*}[\inf\eta'']F(u_-,\bar{u}_-)\\
\leq  -\frac{1}{4}\delta c^* [\inf\eta''] (\bar{u}_+-u_+)^2 -\frac{1}{4}\delta c^{*}[\inf\eta''] (\bar{u}_- - u_-)^2,
\end{align}
from \eqref{simple_int} and \eqref{case3_scalar}.
This shows \eqref{entropy_dissipation_room_scalar}.

\emph{Case:}  $u_+\leq \bar{u}_+ \leq \bar{u}_- \leq u_-$

From \Cref{entropy_dissipation_rewrite_lemma_scalar},  \eqref{trapped_parabs}, and \eqref{F_def} we get,
\begin{align}
D \leq c^{**} [\inf\eta''] F(u_+,\bar{u}_+) +c^{**}[\inf\eta'']F(\bar{u}_-,u_-)\\
\leq  -\frac{1}{12}\delta c^{**} [\inf\eta''] (\bar{u}_+-u_+)^2 -\frac{1}{12}\delta c^{**}[\inf\eta''] (\bar{u}_- - u_-)^2,
\end{align}
from \eqref{simple_int} and \eqref{case3_scalar}.
This shows \eqref{entropy_dissipation_room_scalar}.

\emph{Case:}  $u_+\leq \bar{u}_+ \leq {u}_- \leq \bar{u}_-$

From \Cref{entropy_dissipation_rewrite_lemma_scalar},  \eqref{trapped_parabs}, and \eqref{F_def} we get,
\begin{align}
D \leq c^{**} [\inf\eta''] F(u_+,\bar{u}_+) -c^{*}[\inf\eta'']F(u_-,\bar{u}_-)\\
\leq  -\frac{1}{12}\delta c^{**} [\inf\eta''] (\bar{u}_+-u_+)^2 -\frac{1}{4}\delta c^{*}[\inf\eta''] (\bar{u}_- - u_-)^2,
\end{align}
from \eqref{simple_int} and \eqref{case3_scalar}.
This shows \eqref{entropy_dissipation_room_scalar}.

This completes the proof of \Cref{negative_entropy_diss_scalar_lemma}.
\end{proof}

\section{Proof of the main theorem (\Cref{main_stability_result_statement_scalar})}\label{main_proof_sec}

To prove the main theorem for solutions defined on the finite time interval $[0,T)$ (\Cref{main_stability_result_statement_scalar}), we first prove it for any time interval of length $\frac{1}{C}$ for a uniform constant $C$ (the main proposition -- \Cref{main_stability_result_statement_scalar_unit_time}). Then we use induction (\Cref{induction_section}) to extend the time  interval to $[0,T)$. The constants will depend on the magnitude of the time $T$, but since $T$ is always assumed to be finite in this article (and in \cite{MR3605552}), we are okay.

\begin{proposition}[Main proposition -- $L^2$ stability for entropic piecewise-Lipschitz solutions to scalar balance laws for uniformly small time]\label{main_stability_result_statement_scalar_unit_time}

 Fix $T>0$.

Consider $u,\bar{u}$ weak solutions to \eqref{system}. Assume  $u \in L^2(\mathbb{R}\times[0,T))\cap L^\infty(\mathbb{R}\times[0,T))$ verifies the strong trace property (\Cref{strong_trace_definition}) and is entropic for the strictly convex entropy $\eta\in C^3(\mathbb{R})$, where $\eta$ and $G$ verify \eqref{requirements_G_piecewise_scalar}. Further, assume $\bar{u}$ is in the form \eqref{form_for_burgers_hilbert_paper}. Furthermore, assume that $u^0-\bar{u}^0\in L^2(\mathbb{R})$.

Assume also that there exists $\delta>0$ such that 
\begin{align}\label{delta_room_scalar_unit_time}
\bar{u}(s(t)-,t)-\bar{u}(s(t)+,t)>\delta
\end{align}
for all $t\in[0,T)$.

Then, there exists a constant $C>0$ and a Lipschitz continuous function $X:[0,T)\to\mathbb{R}$ with $X(0)=0$ and such that for $a,b\in[0,T)$ with $0\leq b-a\leq \frac{1}{C}$,
\begin{equation}
\begin{aligned}\label{main_global_stability_result_scalar_unit_time}
&{\rm ap}\,\lim_{t\to {b}^{-}}\int\limits_{\mathbb{R}}\eta(u(x,t)|\bar{u}(x+X(t),t))\,dx\\
\leq
C \Bigg[\Bigg(&{\rm ap}\,\lim_{t\to {a}^{+}}\int\limits_{\mathbb{R}}\eta(u(x,t)|\bar{u}(x+X(t),t))\,dx\Bigg)^{1/4}
\\
+
\Bigg(&{\rm ap}\,\lim_{t\to {a}^{+}}\int\limits_{\mathbb{R}}\eta(u(x,t)|\bar{u}(x+X(t),t))\,dx\Bigg)^3\Bigg] e^{C(b-a)+
C\int\limits_a^{b}\norm{\partial_ x\bar{u}(\cdot,t)}_{L^2(\mathbb{R})}^2\,dt},
\end{aligned}
\end{equation}
where ${\rm ap}\,\lim$ denotes the approximate limit.

Moreover, 
\begin{align}
X(t)=s(t)-h(t),
\end{align}
where $h(t)$ is a generalized characteristic of $u$.

The constant $C$ depends on $\delta$ and $T$.
\end{proposition}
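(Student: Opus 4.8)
The plan is to build the shift $X$ from a single generalized characteristic of $u$ and then feed the resulting dissipation into a nonlinear Gronwall argument over the short interval $[a,b]$. First I would construct a Lipschitz map $h\colon[0,T)\to\mathbb{R}$ that is a generalized characteristic of $u$ with $h(0)=s(0)$: using the strong trace property, define the velocity field $(x,t)\mapsto\sigma(u(x+,t),u(x-,t))$ with $\sigma$ as in \eqref{RH_relation_1}, and solve the associated differential inclusion in the Filippov sense, so that for a.e.\ $t$ one has $\dot h(t)=\sigma(u(h(t)+,t),u(h(t)-,t))$. Since $u$ is entropic, \Cref{entropic_shock_lemma_scalar_lax} forces the Lax inequality $u(h(t)-,t)\geq u(h(t)+,t)$ at the traces. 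Setting $X(t)\coloneqq s(t)-h(t)$ gives $X(0)=0$, and I would then invoke \Cref{global_entropy_dissipation_rate_systems} together with the approximate-limit monotonicity of \Cref{left_right_ap_limits} to control $E(t)\coloneqq\int_{\mathbb{R}}\eta(u(x,t)|\bar u(x+X(t),t))\,dx$ through the dissipation identity \eqref{nonlocal_dissipation_formula}.

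The decisive simplification is that, with this choice of $h$, the boundary line in \eqref{nonlocal_dissipation_formula} is exactly the left-hand side of \eqref{entropy_dissipation_room_scalar} with $u_\pm=u(h(t)\pm,t)$, $\bar u_\pm=\bar u(s(t)\pm,t)$ and speed $\sigma(u_+,u_-)=\dot h(t)$. Hence \Cref{negative_entropy_diss_scalar_lemma} applies---its hypotheses $u_-\geq u_+$, $\bar u_- -\bar u_+>\delta$ (from \eqref{delta_room_scalar_unit_time}) and the $L^\infty$ bound all hold---and the shock contribution is bounded above by $-c\,N(t)$, where $N(t)\coloneqq(u_+-\bar u_+)^2+(u_--\bar u_-)^2$. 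This is the engine of the proof: I would next observe that $\dot X=\dot s-\dot h=\sigma(\bar u_+,\bar u_-)-\sigma(u_+,u_-)$, so by the $C^1$ regularity of $\sigma$ from \Cref{H_cont} one gets $|\dot X(t)|^2\leq C\,N(t)$. Integrating $-c\,N$ over $[a,b]$ therefore simultaneously yields the $L^2$-in-time control of the shift and supplies exactly the negativity needed to absorb the entropy produced by translating $\bar u$.

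For the bulk terms of \eqref{nonlocal_dissipation_formula} I would argue as follows. The two source-term contributions combine, via the translation invariance of $G$ (so that $G(\bar u(\cdot,t))(x+X(t))=G(\bar u(\cdot+X(t),t))(x)$) and the Lipschitz bound $L^2\to L^2$, together with \eqref{generalized_requirement_remark_scalar_paper}, into a quantity bounded by $C\,E(t)$. The translation term $\int 2(\partial_x\bar u)\dot X\,\eta''(\bar u)(u-\bar u)\,dx$ is estimated by $C|\dot X|\,\|\partial_x\bar u\|_{L^2}E^{1/2}$, and Young's inequality splits it as $\tfrac{c}{2}N+C\|\partial_x\bar u\|_{L^2}^2 E$; the first piece is absorbed into $-cN$ and the second enters the exponential. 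The remaining relative-flux term $\int(\partial_x\eta'(\bar u))A(u|\bar u)\,dx$ is the crux, because $\bar u$ in the form \eqref{form_for_burgers_hilbert_paper} has $\partial_x\bar u\in L^2\setminus L^\infty$ (the logarithmic singularity of $\phi$ at the shock). Away from the shock $\partial_x\bar u$ is bounded, giving a harmless $C\,E$; near the shock I would exploit that the logarithmic singularity is integrable and is located precisely where the negativity $-cN$ lives, so that this part is dominated by $\tfrac{c}{4}N$ plus a controllable remainder. Interpolating $\int|\partial_x\bar u|(u-\bar u)^2\,dx$ between $L^2$ and $L^\infty$ produces a genuinely sublinear power of $E$, which is the source of the H\"older (rather than Lipschitz) dependence on the initial excess.

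Collecting the estimates yields a differential inequality of the schematic form $E'\leq \beta(t)E+(\text{sublinear in }E)$ with $\beta=C+C\|\partial_x\bar u\|_{L^2}^2\in L^1$, after the negative entropy has absorbed the dangerous shift- and near-shock contributions. I would close it by a Bernoulli-type substitution (e.g.\ $W=E^{1/4}$), which linearizes the inequality and, upon integration over an interval of length at most $1/C$, produces the stated bound with the fractional exponent $1/4$ and, from the complementary interpolation regime, the higher exponent $3$; the approximate-limit inequalities of \Cref{left_right_ap_limits} make this rigorous at the endpoints $a^+$ and $b^-$. The main obstacle is precisely the relative-flux term: with only $L^2$ control on $\partial_x\bar u$ the classical weak--strong Gronwall does not close, and the whole scheme hinges on showing that the negative entropy generated by the characteristic shift is large enough to neutralize both the translation entropy and the near-shock part of the relative flux.
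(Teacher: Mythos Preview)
Your construction of $h$ as a Filippov generalized characteristic, the application of \Cref{negative_entropy_diss_scalar_lemma} to the boundary term, the observation $|\dot X|^2\leq C N(t)$ via \Cref{H_cont}, and the Young's-inequality treatment of the translation and source terms all match the paper's proof. The divergence---and a genuine gap---is in your handling of the relative-flux term $\int(\partial_x\eta'(\bar u))A(u|\bar u)\,dx$ and the closing argument.

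Your two proposed mechanisms for the logarithmic singularity both fail. First, the near-shock part cannot be ``dominated by $\tfrac{c}{4}N$'': the quantity $N(t)$ depends only on the \emph{trace} values $u(h(t)\pm,t)$, while the near-shock integral $\int_{|x-h(t)|<r}|\ln|x-h(t)||\,(u-\bar u)^2\,dx$ probes $u$ over a full spatial neighborhood, and for a merely $L^\infty$ solution with strong traces there is no mechanism to control the latter by the former. Second, your $L^2$--$L^\infty$ interpolation yields $\int|\partial_x\bar u|(u-\bar u)^2\,dx\leq C\|\partial_x\bar u\|_{L^2}E^{1/2}$, hence a differential inequality of the form $E'\leq \beta E+\gamma E^{1/2}$. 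A Bernoulli substitution $W=E^{1/2}$ does linearize this, but the solution satisfies $W(b)\leq e^{C(b-a)}W(a)+C\gamma(b-a)$, which does \emph{not} vanish when $E(a)=0$; the sublinear right-hand side destroys uniqueness rather than producing H\"older dependence.

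The paper's device is different and is what actually generates the exponents $1/4$ and $3$. One introduces a \emph{free} parameter $\epsilon>0$, splits the spatial integral into $B_\epsilon(h(t))$ and its complement, and uses the $L^1$ bound of $\ln|\cdot|$ on the ball against $\|A(u|\bar u)\|_{L^\infty}$ (a fixed constant, since $u,\bar u\in L^\infty$) and the $L^\infty$ bound of $\ln|\cdot|$ off the ball against $\|A(u|\bar u)\|_{L^1}\sim E$. This produces an inequality of the schematic form
\[
E(b_*)\leq C\bigl(E(a_*)+C(\epsilon+\epsilon|\ln\epsilon|)\bigr)\,e^{C|\ln\epsilon|(b_*-a_*)+C\int_{a_*}^{b_*}\|\partial_x\bar u\|_{L^2}^2},
\]
after a \emph{linear} Gronwall. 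Only now does one choose $\epsilon\coloneqq E(a_*)$; then $e^{C|\ln\epsilon|(b_*-a_*)}\leq E(a_*)^{C(b_*-a_*)}+E(a_*)^{-C(b_*-a_*)}$, and the restriction $b_*-a_*\leq 1/C$ forces the resulting exponents into $[1/4,3]$ (via $\alpha|\ln\alpha|\leq C(\sqrt\alpha+\alpha^2)$ and $\alpha^\beta\leq\alpha^{1/4}+\alpha^3$ for $\beta\in[1/4,3]$). The fractional powers thus come from an \emph{a posteriori} optimization over a parameter, not from a nonlinear ODE, and the small-time restriction is exactly what keeps the exponent $C(b_*-a_*)$ in range.
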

\begin{proof}

Note that because $\bar{u}$ is in the form \eqref{form_for_burgers_hilbert_paper}, it is smooth  on $\{(x,t)\in\mathbb{R}\times[0,T) | x<s(t)\}$ and on $\{(x,t)\in\mathbb{R}\times[0,T) | x>s(t)\}$, where $s:[0,T)\to\mathbb{R}$ is a Lipschitz function . Further, $\partial_x \bar{u} \in  L^2((\mathbb{R}\times[0,T))\setminus\{(x,t)|x=s(t)\})$ and $\bar{u}\in L^2(\mathbb{R}\times[0,T))\cap L^\infty(\mathbb{R}\times[0,T))$.

We let $C$ denote a generic constant, in particular depending on $\delta$.

\uline{Step 1}
We solve the following ODE in the Filippov sense:

\begin{align}
  \begin{cases}\label{ODE_scalar_shift_h}
   \dot{h}(t)=A'(u(h(t),t))\\
   h(0)=s(0).
  \end{cases}
\end{align}

We use the following lemma.
\begin{lemma}[Existence of Filippov flows]\label{Filippov_existence_scalar}
Let $V(u,t):\mathbb{R} \times [0,\infty)\to\mathbb{R}$ be bounded on $\mathbb{R} \times [0,\infty)$, continuous in $u$, and measurable in $t$. Let $u$ be a bounded, weak solution to \eqref{system}, entropic for the entropy $\eta$. Assume also that $u$ verifies the strong trace property (\Cref{strong_trace_definition}). Let $x_0\in\mathbb{R}$. Then we can solve 
\begin{align}
  \begin{cases}\label{ODE_scalar}
   \dot{h}(t)=V(u(h(t),t),t)\\
   h(0)=x_0,
  \end{cases}
\end{align}
in the Filippov sense. Which is to say, there exists a Lipschitz function $h:[0,\infty)\to\mathbb{R}$ such that
\begin{align}
\mbox{Lip}[h]\leq \norm{V}_{L^\infty},\label{fact1_scalar}\\
h(0)=x_0,\label{fact2_scalar}
\shortintertext{and}
\dot{h}(t)\in I[V(u_+,t),V(u_-,t)],\label{fact3_scalar}
\end{align}
for almost every $t$. We denote $u_\pm\coloneqq u(h(t)\pm,t)$. We use $I[a,b]$ to denote the closed interval with endpoints $a$ and $b$.

Furthermore, for almost every $t$,
\begin{align}
f(u_+)-f(u_-)=\dot{h}(u_+-u_-),\label{fact4_scalar}\\
q(u_+)-q(u_-)\leq\dot{h}(\eta(u_+)-\eta(u_-)).\label{fact5_scalar}
\end{align}
I.e., for almost every $t$, either $(u_+,u_-,\dot{h})$ is a shock entropic for $\eta$ or $u_+=u_-$.
\end{lemma}

The proof of \eqref{fact1_scalar}, \eqref{fact2_scalar}, and \eqref{fact3_scalar} follows closely the proof of Proposition 1 in \cite{Leger2011} and the proof of Lemma 3.5 in \cite{2017arXiv170905610K}. See \Cref{Filippov_existence_scalar_proof_section} for a proof of  \eqref{fact1_scalar}, \eqref{fact2_scalar}, and \eqref{fact3_scalar}.

It is well known that \eqref{fact4_scalar} and \eqref{fact5_scalar} are true generally for any Lipschitz continuous function $h:[0,\infty)\to\mathbb{R}$ when $u$ is BV. When instead $u$ is only known to have strong traces (\Cref{strong_trace_definition}), then \eqref{fact4_scalar} and \eqref{fact5_scalar} are stated in Lemma 6 in \cite{Leger2011}. 
For proofs of  \eqref{fact4_scalar} and \eqref{fact5_scalar}, see the appendix in \cite{Leger2011}. We do not prove the properties \eqref{fact4_scalar} and \eqref{fact5_scalar} here.

Remark that the function $h$ is a generalized characteristic for the solution $u$ (see \cite[Chapter 10]{dafermos_big_book}).

\uline{Step 2}

Denote 
\begin{equation}
\begin{aligned}
u_\pm\coloneqq u(h(t)\pm,t),\\
\bar{u}_\pm\coloneqq \bar{u}(s(t)\pm,t).
\end{aligned}
\end{equation}

Then \Cref{entropic_shock_lemma_scalar_lax}, \eqref{fact4_scalar}, and \eqref{fact5_scalar} imply that $u_-\geq u+$. Further, \eqref{delta_room_scalar} implies that $\bar{u}_->\bar{u}_+$. From \eqref{ODE_scalar_shift_h}, \eqref{fact3_scalar}, \eqref{fact4_scalar}, and \eqref{RH_relation_1}, we see $\dot{h}(t)=\sigma(u_+,u_-)$. Thus, from \Cref{negative_entropy_diss_scalar_lemma} we have
\begin{align}\label{entropy_dissipation_room_scalar_in_use}
q(u_+;\bar{u}_+)-q(u_-;\bar{u}_-)-\dot{h}(t)(\eta(u_+|\bar{u}_+)-\eta(u_-|\bar{u}_-)) \leq -c\big((u_+-\bar{u}_+)^2+(u_--\bar{u}_-)^2\big),
\end{align}
where $c$ is the constant from the right hand side of\eqref{entropy_dissipation_room_scalar} and it depends on $\norm{u_\pm(t)}_{L^\infty([0,T))}$, $\norm{\bar{u}(s(t)\pm,t)}_{L^\infty([0,T))}$  and $\delta$.

Because $A\in C^3 (\mathbb{R})$, the function $\sigma:\mathbb{R}^2\to\mathbb{R}$, defined as
\begin{align}\label{RH_relation_2}
\sigma(v,w) \coloneqq
  \begin{cases}
   \frac{A(v)-A(w)}{v-w} & \text{if } v\neq w \\
   A'(v)       & \text{if } v=w,
  \end{cases}
\end{align}
is in $C^1(\mathbb{R}^2)$ by \Cref{H_cont}. Thus, from Taylor's theorem,
\begin{align}\label{control_on_difference_sigmas_scalar}
\abs{\sigma(\bar{u}_+,\bar{u}_-)-\sigma(u_+,u_-)}
\leq \sup\abs{\partial_v \sigma(\xi)}\Big(\abs{\bar{u}_+-u_+)}+\abs{\bar{u}_--u_-}\Big),
\end{align}
where the supremum of $\abs{\partial_v \sigma(\xi)}$ runs over the set of $\xi$ such that $\abs{\xi}\leq\max\{\norm{u}_{L^\infty},\norm{\bar{u}}_{L^\infty}\}$.
Note that we have used the symmetry  of $\sigma$. In particular, $\partial_v \sigma(\xi)=\partial_w \sigma(\xi)$.

From \eqref{control_on_difference_sigmas_scalar}, we get
\begin{align}\label{control_on_difference_sigmas_scalar_2}
\abs{\sigma(\bar{u}_+,\bar{u}_-)-\sigma(u_+,u_-)}^2
\leq 2(\sup\abs{\partial_v \sigma(\xi)})^2\Big(\abs{\bar{u}_+-u_+)}^2+\abs{\bar{u}_--u_-}^2\Big).
\end{align}

From \eqref{entropy_dissipation_room_scalar_in_use} and \eqref{control_on_difference_sigmas_scalar_2}, we receive
\begin{align}\label{entropy_dissipation_room_scalar_in_use_with_differenc_sigmas}
q(u_+;\bar{u}_+)-q(u_-;\bar{u}_-)-\dot{h}(t)(\eta(u_+|\bar{u}_+)-\eta(u_-|\bar{u}_-)) \leq -\frac{c}{2(\sup\abs{\partial_v \sigma(\xi)})^2}\abs{\sigma(\bar{u}_+,\bar{u}_-)-\sigma(u_+,u_-)}^2.
\end{align}

\uline{Step 3}
\hfill

Define
\begin{align}\label{Z_def_main_engine}
A(u|\bar{u})\coloneqq A(u)-A(\bar{u})- A'(\bar{u})(u-\bar{u}),\\
X(t)\coloneqq s(t)-h(t).\label{X_def_calc_main_engine}
\end{align}

Note that $A(u|\bar{u})$ is locally quadratic in $u-\bar{u}$.

Then from \Cref{global_entropy_dissipation_rate_systems},  we have for almost every $a_*,b_*\in[0,T)$ verifying $a_* < b_*$,
\begin{equation}
\begin{aligned}\label{nonlocal_dissipation_formula_main_engine}
&\int\limits_{-\infty}^{\infty}\eta(u(x,b_*)|\bar{u}(x+X(b_*),b_*))\,dx-\int\limits_{-\infty}^{\infty}\eta(u(x,a_*)|\bar{u}(x+X(a_*),a_*))\,dx
\\
&\hspace{.7in}\leq \int\limits_{a_*}^{b_*}q(u(h(t)+,t);\bar{u}(s(t)+,t))-q(u(h(t)-,t);\bar{u}(s(t)-,t))
\\
&\hspace{1.2in}-\dot{h}(t)\big(\eta(u(h(t)+,t)|\bar{u}(s(t)+,t))-\eta(u(h(t)-,t)|\bar{u}(s(t)-,t))\big)\,dt
\\
&\hspace{1.2in}-\int\limits_{a_*}^{b_*} \int\limits_{-\infty}^{\infty}\Bigg[\Bigg(\partial_x \bigg|_{(x+X(t),t)}\hspace{-.45in} \bar{u}(x,t)\Bigg)\eta''(\bar{u}(x+X(t),t)) A(u(x,t)|\bar{u}(x+X(t),t))
\\
&\hspace{1.2in}+\Bigg(2\partial_x\bigg|_{(x+X(t),t)}\hspace{-.45in}\bar{u}(x,t)\dot{X}(t)\Bigg)\eta''(\bar{u}(x+X(t),t))[u(x,t)-\bar{u}(x+X(t),t)]
\\
&\hspace{1.2in}-\eta'(u(x,t)|\bar{u}(x+X(t),t))G(u(\cdot,t))(x)
\\
&+
\Bigg(G(\bar{u}(\cdot,t))(x+X(t))-G(u(\cdot,t))(x)\Bigg)\eta''(\bar{u}(x+X(t),t))[u(x,t)-\bar{u}(x+X(t),t)]\Bigg]\,dxdt.
\end{aligned}
\end{equation}

Recall that because $\bar{u}$ is in the form \eqref{form_for_burgers_hilbert_paper}, we can write $\bar{u}(x,t)=\phi(x-s(t))+w(x-s(t),t)$, for $\phi$ and $w$ as in \eqref{form_for_burgers_hilbert_paper}. Then the term
\begin{align}
\int\limits_{a_*}^{b_*} \int\limits_{-\infty}^{\infty}\Bigg(\partial_x \bigg|_{(x+X(t),t)}\hspace{-.45in} \bar{u}(x,t)\Bigg)\eta''(\bar{u}(x+X(t),t)) A(u(x,t)|\bar{u}(x+X(t),t))\,dxdt
\end{align}
from \eqref{nonlocal_dissipation_formula_main_engine} becomes
\begin{align*}
&\int\limits_{a_*}^{b_*} \int\limits_{-\infty}^{\infty}\Bigg(\partial_x \bigg|_{(x+X(t),t)}\hspace{-.45in} \phi(x)+w(x-s(t),t)\Bigg)\eta''(\bar{u}(x+X(t),t)) A(u(x,t)|\bar{u}(x+X(t),t))\,dxdt\\
&=
\\&\int\limits_{a_*}^{b_*}\hspace{-.05in} \int\limits_{-\infty}^{\infty}\hspace{-.05in}\frac{2}{\pi}\Big(1+\ln\abs{x+X(t)}\Big)\sgn(x+X(t))m(x+X(t))\eta''(\bar{u}(x+X(t),t)) A(u(x,t)|\bar{u}(x+X(t),t))\,dxdt\\
&+\int\limits_{a_*}^{b_*} \int\limits_{-\infty}^{\infty} \frac{2}{\pi}\abs{x+X(t)}\ln\abs{x+X(t)}m'(x+X(t))\eta''(\bar{u}(x+X(t),t)) A(u(x,t)|\bar{u}(x+X(t),t))\,dxdt\\
&+\int\limits_{a_*}^{b_*} \int\limits_{-\infty}^{\infty}\Bigg(\partial_x \bigg|_{(x+X(t),t)}\hspace{-.45in} w(x-s(t),t)\Bigg)\eta''(\bar{u}(x+X(t),t)) A(u(x,t)|\bar{u}(x+X(t),t))\,dxdt.
\end{align*}

To handle the term, 
\begin{align}\label{need_handle_this_term}
\int\limits_{a_*}^{b_*} \int\limits_{-\infty}^{\infty}\frac{2}{\pi}\ln\abs{x+X(t)}\sgn(x+X(t))m(x+X(t))\eta''(\bar{u}(x+X(t),t)) A(u(x,t)|\bar{u}(x+X(t),t))\,dxdt,
\end{align}
we do the following:
\begin{align}
&\int\limits_{a_*}^{b_*} \int\limits_{-\infty}^{\infty}\frac{2}{\pi}\ln\abs{x+X(t)}\sgn(x+X(t))m(x+X(t))\eta''(\bar{u}(x+X(t),t)) A(u(x,t)|\bar{u}(x+X(t),t))\,dxdt\\
&=
\int\limits_{a_*}^{b_*} \hspace{-.03in}\int\limits_{B_\epsilon(-X(t))}\hspace{-.2in}\overbracket[.5pt][7pt]{\frac{2}{\pi}\ln\abs{x+X(t)}\sgn(x+X(t))m(x+X(t))}^{L^1(\mathbb{R})}\overbracket[.5pt][7pt]{\eta''(\bar{u}(x+X(t),t)) A(u(x,t)|\bar{u}(x+X(t),t))}^{L^\infty(\mathbb{R})}\,dxdt\\
&+\int\limits_{a_*}^{b_*} \hspace{-.03in}\int\limits_{(B_\epsilon(-X(t)))^{c}}\hspace{-.27in}\overbracket[.5pt][7pt]{\frac{2}{\pi}\ln\abs{x+X(t)}\sgn(x+X(t))m(x+X(t))\eta''(\bar{u}(x+X(t),t))}^{L^\infty(\mathbb{R})}\overbracket[.5pt][7pt]{A(u(x,t)|\bar{u}(x+X(t),t))}^{L^1(\mathbb{R})}\,dxdt
\end{align}
for $0<\epsilon$, and estimate these two terms by using the indicted H\"older dualities (in the $x$ coordinate). Recall the support of $m$ is contained in $[-2,2]$.

Note that for $\epsilon>0$,
\begin{align}
\norm{\ln\abs{\cdot+X(t)}}_{L^1(B_\epsilon(-X(t)))}\leq 6\epsilon+2\epsilon\abs{\ln(\epsilon)},
\end{align}
and for $0<\epsilon<2$
\begin{align}
\norm{\ln\abs{\cdot+X(t)}}_{L^\infty(\epsilon-X(t),2-X(t))}\leq \ln(2)+\abs{\ln(\epsilon)}.
\end{align}

Then, from these estimates, we get 
\begin{equation}
\begin{aligned}\label{playing_with_epsilon}
\abs{\int\limits_{a_*}^{b_*} \int\limits_{-\infty}^{\infty}\Bigg[\frac{2}{\pi}\ln\abs{x+X(t)}\sgn(x+X(t))m(x+X(t))\eta''(\bar{u}(x+X(t),t)) A(u(x,t)|\bar{u}(x+X(t),t))\Bigg]\,dxdt}\\
\leq C\int\limits_{a_*}^{b_*} \Bigg[\Big(\epsilon+\epsilon\abs{\ln(\epsilon)}\Big) \norm{A(u|\bar{u})}_{L^\infty(\mathbb{R})}+\Big(\ln(2)+\abs{\ln(\epsilon)}\Big)\norm{A(u|\bar{u})}_{L^1(\mathbb{R})}\Bigg]\,dt.
\end{aligned}
\end{equation}

Besides just the term \eqref{need_handle_this_term}, we then want to estimate from above  the rest of the terms in \eqref{nonlocal_dissipation_formula_main_engine}:
\begin{equation}
\begin{aligned}\label{big_term_estimate_above_scalar}
&\hspace{-.7in}\int\limits_{-\infty}^{\infty}\Bigg|\overbracket[.5pt][7pt]{\Bigg(\sgn(x+X(t))m(x+X(t))+\abs{x+X(t)}\ln\abs{x+X(t)}m'(x+X(t))+\Bigg(\partial_x \bigg|_{(x+X(t),t)}\hspace{-.45in} w(x-s(t),t)\Bigg)\Bigg)\eta''(\bar{u})}^{L^\infty(\mathbb{R})}\overbracket[.5pt][7pt]{A(u|\bar{u})}^{L^1(\mathbb{R})}
\\
&\hspace{.6in}+\dot{X}(t)\overbracket[.5pt][7pt]{\Bigg(2\partial_x\bigg|_{(x+X(t),t)}\hspace{-.45in}\bar{u}(x,t)\Bigg)}^{L^2(\mathbb{R})}\overbracket[.5pt][7pt]{\eta''(\bar{u}(x+X(t),t))}^{L^\infty(\mathbb{R})}\overbracket[.5pt][7pt]{[u(x,t)-\bar{u}(x+X(t),t)]}^{L^2(\mathbb{R})}
\\
&\hspace{.6in}-\overbracket[.5pt][7pt]{\eta'(u(x,t)|\bar{u}(x+X(t),t))}^{L^1(\mathbb{R})}\overbracket[.5pt][7pt]{G(u(\cdot,t))(x)}^{L^\infty(\mathbb{R})}
\\
&\hspace{.8in}+
\overbracket[.5pt][7pt]{\Bigg(G(\bar{u}(\cdot,t))(x+X(t))-G(u(\cdot,t))(x)\Bigg)}^{L^2(\mathbb{R})}\overbracket[.5pt][7pt]{\eta''(\bar{u}(x+X(t),t))}^{L^\infty(\mathbb{R})}\overbracket[.5pt][7pt]{[u(x,t)-\bar{u}(x+X(t),t)]}^{L^2(\mathbb{R})}\Bigg|\,dx.
\end{aligned}
\end{equation}

To estimate \eqref{big_term_estimate_above_scalar}, we use the H\"older dualities indicated.

Note that to handle the term
\begin{align}
\overbracket[.5pt][7pt]{\eta'(u(x,t)|\bar{u}(x+X(t),t))}^{L^1(\mathbb{R})}\overbracket[.5pt][7pt]{G(u(\cdot,t))(x)}^{L^\infty(\mathbb{R})}
\end{align}
seen in \eqref{big_term_estimate_above_scalar}, we use \eqref{requirements_G_piecewise_scalar}. Note that if $\eta(u)=\alpha u^2+\beta u+\gamma$, then $\eta'(a|b)\equiv0$ for all $a,b$ so we do not require that $G$ be bounded from $L^\infty(\mathbb{R})\to L^\infty(\mathbb{R})$.

Further, due to $G$ being translation invariant and Lipschitz continuous from $L^2(\mathbb{R})\to L^2(\mathbb{R})$,
\begin{align}\label{control_on_difference_of_Gs_scalar}
\norm{G(\bar{u}(\cdot,t))(\cdot+X(t))-G(u(\cdot,t))(\cdot)}_{L^2(\mathbb{R})}&=\norm{G(\bar{u}(\cdot+X(t),t))(\cdot)-G(u(\cdot,t))(\cdot)}_{L^2(\mathbb{R})}
\\
&\leq \mbox{Lip}[G]\norm{\bar{u}(\cdot+X(t),t)-u(\cdot,t)}_{L^2(\mathbb{R})}.
\end{align}

For $\norm{\partial_x\bar{u}(\cdot+X(t),t)}_{L^2(\mathbb{R})}\norm{\eta''(\bar{u})}_{L^\infty}\neq0$ we have, due to `Young's inequality with $\epsilon$,'
\begin{equation}
\begin{aligned}\label{youngs_inequality_part1_scalar}
&\abs{\dot{X}(t)}\norm{u(\cdot,t)-\bar{u}(\cdot+X(t),t)}_{L^2(\mathbb{R})} 
\leq 
\\
&\hspace{.7in}\frac{c}{8(\sup\abs{\partial_v \sigma(\xi)})^2\norm{\partial_x\bar{u}(\cdot+X(t),t)}_{L^2(\mathbb{R})}\norm{\eta''(\bar{u})}_{L^\infty}}(\dot{X}(t))^2
\\
&\hspace{1in}+\frac{2(\sup\abs{\partial_v \sigma(\xi)})^2\norm{\partial_x\bar{u}(\cdot+X(t),t)}_{L^2(\mathbb{R})}\norm{\eta''(\bar{u})}_{L^\infty}}{c} \norm{u(\cdot,t)-\bar{u}(\cdot+X(t),t)}_{L^2(\mathbb{R})}^2,
\end{aligned}
\end{equation}
where $c$ is from the right hand side of \eqref{entropy_dissipation_room_scalar_in_use}. Continuing, we get from \eqref{youngs_inequality_part1_scalar},
\begin{equation}
\begin{aligned}\label{youngs_inequality_scalar}
&2\abs{\dot{X}(t)}\norm{\partial_x\bar{u}(\cdot+X(t),t)}_{L^2(\mathbb{R})}\norm{\eta''(\bar{u})}_{L^\infty}\norm{u(\cdot,t)-\bar{u}(\cdot+X(t),t)}_{L^2(\mathbb{R})}
\\
&\hspace{.3in}\leq\frac{c}{4(\sup\abs{\partial_v \sigma(\xi)})^2}(\dot{X}(t))^2
\\
&\hspace{.3in}+\frac{4(\sup\abs{\partial_v \sigma(\xi)})^2\norm{\partial_x\bar{u}(\cdot+X(t),t)}_{L^2(\mathbb{R})}^2\norm{\eta''(\bar{u})}_{L^\infty}^2}{c} \norm{u(\cdot,t)-\bar{u}(\cdot+X(t),t)}_{L^2(\mathbb{R})}^2.
\end{aligned}
\end{equation}
For any $t$ such that $\norm{\partial_x\bar{u}(\cdot+X(t),t)}_{L^2(\mathbb{R})}\norm{\eta''(\bar{u})}_{L^\infty}=0$, we don't have to estimate the term
\begin{align}
\dot{X}(t)\Bigg(2\partial_x\bigg|_{(x+X(t),t)}\hspace{-.45in}\bar{u}(x,t)\Bigg)\eta''(\bar{u}(x+X(t),t))[u(x,t)-\bar{u}(x+X(t),t)].
\end{align}

Note that by the Rankine-Hugoniot relation, $\dot{s}(t)=\sigma(\bar{u}_+,\bar{u}_-)$. And, due to \eqref{ODE_scalar_shift_h}, \eqref{fact3_scalar}, \eqref{fact4_scalar}, and \eqref{RH_relation_2}, we have $\dot{h}(t)=\sigma(u_+,u_-)$. Thus,
\begin{align}\label{rewrite_dot_X_scalar}
\dot{X}(t)=\dot{s}(t)-\dot{h}(t)=\sigma(\bar{u}_+,\bar{u}_-)-\sigma(u_+,u_-).
\end{align}

We then combine \eqref{entropy_dissipation_room_scalar_in_use_with_differenc_sigmas}, \eqref{youngs_inequality_scalar}, and \eqref{rewrite_dot_X_scalar} to get that
\begin{equation}
\begin{aligned}\label{control_on_bunch_of_stuff_scalar}
&q(u(h(t)+,t);\bar{u}(s(t)+,t))-q(u(h(t)-,t);\bar{u}(s(t)-,t))
\\
&\hspace{.3in}-\dot{h}(t)\big(\eta(u(h(t)+,t)|\bar{u}(s(t)+,t))-\eta(u(h(t)-,t)|\bar{u}(s(t)-,t))\big)
\\
&\hspace{.3in}+
\frac{c}{4(\sup\abs{\partial_v \sigma(\xi)})^2}(\dot{X}(t))^2
\\
&\hspace{.3in}+
\frac{4(\sup\abs{\partial_v \sigma(\xi)})^2\norm{\partial_x\bar{u}(\cdot+X(t),t)}_{L^2(\mathbb{R})}^2\norm{\eta''(\bar{u})}_{L^\infty}^2}{c} \norm{u(\cdot,t)-\bar{u}(\cdot+X(t),t)}_{L^2(\mathbb{R})}^2 
\\
&\hspace{.5in}\leq \frac{4(\sup\abs{\partial_v \sigma(\xi)})^2\norm{\partial_x\bar{u}(\cdot+X(t),t)}_{L^2(\mathbb{R})}^2\norm{\eta''(\bar{u})}_{L^\infty}^2}{c} \norm{u(\cdot,t)-\bar{u}(\cdot+X(t),t)}_{L^2(\mathbb{R})}^2
\\
&\hspace{.5in}-\frac{c}{4(\sup\abs{\partial_v \sigma(\xi)})^2}(\dot{X}(t))^2.
\end{aligned}
\end{equation}

Recall that $A(c|d)$ and $\eta(c|d)$ are locally quadratic in $c-d$. Similarly, remark that $\eta'(c|d)$ is locally quadratic in $c-d$ by virtue of $\eta\in C^3(\mathbb{R})$. Moreover, due to the \emph{strict} convexity of $\eta$, for $c$ and $d$ in a fixed compact set, there exists $c_*>0$ such that $\eta(c|d)\geq c_*(c-d)^2$. Then, from \eqref{nonlocal_dissipation_formula_main_engine}, \eqref{big_term_estimate_above_scalar}, \eqref{playing_with_epsilon}, \eqref{control_on_difference_of_Gs_scalar}, and \eqref{control_on_bunch_of_stuff_scalar}, we find

\begin{equation}
\begin{aligned}\label{right_before_Gronwall_scalar}
&\hspace{-.4in}{{C^{*}}}\int\limits_{a_*}^{b_*}\Bigg[\Big(\abs{\ln(\epsilon)}+\norm{\partial_ x\bar{u}(\cdot,t)}_{L^2(\mathbb{R})}^2+\ln(2)\Big)\int\limits_{\mathbb{R}}\abs{u(x,t)-\bar{u}(x+X(t),t)}^2\,dx+\Big(\epsilon+\epsilon\abs{\ln(\epsilon)}\Big)\norm{A(u|\bar{u})}_{L^\infty(\mathbb{R})}\Bigg]\,dt
\\
&\hspace{1in}+{C^{*}}\int\limits_{\mathbb{R}}\abs{u(x,a_*)-\bar{u}(x+X(a_*),a_*)}^2\,dx-\int\limits_{a_*}^{b_*}\frac{1}{{C^{*}}}(\dot{X}(t))^2\,dt
\geq 
\\
&\hspace{3.5in}\int\limits_{\mathbb{R}}\abs{u(x,b_*)-\bar{u}(x+X(b_*),b_*)}^2\,dx.
\end{aligned}
\end{equation}

We have relabeled the constant $C$ to ${C^{*}}$, to denote that ${C^{*}}$ depends on the way that $\bar{u}$ depends on the logarithm (in particular, see \eqref{playing_with_epsilon}).

In particular, we can drop the last integral on the left hand side of \eqref{right_before_Gronwall_scalar} to get
\begin{equation}
\begin{aligned}\label{right_before_Gronwall_scalar462019}
&\hspace{-.4in}{C^{*}}\int\limits_{a_*}^{b_*}\Bigg[\Big(\abs{\ln(\epsilon)}+\norm{\partial_ x\bar{u}(\cdot,t)}_{L^2(\mathbb{R})}^2+\ln(2)\Big)\int\limits_{\mathbb{R}}\abs{u(x,t)-\bar{u}(x+X(t),t)}^2\,dx+\Big(\epsilon+\epsilon\abs{\ln(\epsilon)}\Big)\norm{A(u|\bar{u})}_{L^\infty(\mathbb{R})}\Bigg]\,dt
\\
&\hspace{1in}+{C^{*}}\int\limits_{\mathbb{R}}\abs{u(x,a_*)-\bar{u}(x+X(a_*),a_*)}^2\,dx\geq 
\\
&\hspace{3.5in}\int\limits_{\mathbb{R}}\abs{u(x,b_*)-\bar{u}(x+X(b_*),b_*)}^2\,dx.
\end{aligned}
\end{equation}

Remark that the map $[0,T)\ni t\mapsto \norm{\partial_ x\bar{u}(\cdot,t)}_{L^2(\mathbb{R})}^2$ is in $L^1([0,T))$ due to $\partial_x \bar{u} \in  L^2((\mathbb{R}\times[0,T))\setminus\{(x,t)|x=s(t)\})$ by the assumption that $\bar{u}$ is in the form \eqref{form_for_burgers_hilbert_paper}.

We apply the Gronwall inequality to \eqref{right_before_Gronwall_scalar462019}. This gives 
\begin{equation}
\begin{aligned}\label{main_global_stability_result_scalar_almost}
&\hspace{-.1in}\int\limits_{\mathbb{R}}\eta(u(x,b_*)|\bar{u}(x+X(b_*),b_*))\,dx\\
&\hspace{.5in}\leq e^{{C^{*}}\big(\abs{\ln(\epsilon)}+\ln(2)\big)(b_*-a_*)+{C^{*}}\int\limits_{a_*}^{b_*}\norm{\partial_ x\bar{u}(\cdot,t)}_{L^2(\mathbb{R})}^2\,dt}\Bigg({C^{*}}\int\limits_{\mathbb{R}}\eta(u(x,a_*)|\bar{u}(x+X(a_*),a_*))\,dx\\
&\hspace{.8in}+{C^{*}}T\Big(\epsilon+\epsilon\abs{\ln(\epsilon)}\Big)\Bigg)\\
&\mbox{Remarking that for $\epsilon>0$, $e^{\abs{\ln(\epsilon)}}\leq \epsilon+\frac{1}{\epsilon}$, we get,}\\
&\hspace{.5in}\leq e^{{C^{*}}\ln(2)(b_*-a_*)+{C^{*}}\int\limits_{a_*}^{b_*}\norm{\partial_ x\bar{u}(\cdot,t)}_{L^2(\mathbb{R})}^2\,dt}\Bigg( \epsilon^{{C^{*}}(b_*-a_*)}+\frac{1}{\epsilon^{{C^{*}}(b_*-a_*)}}\Bigg)\Bigg( {C^{*}}\int\limits_{\mathbb{R}}\eta(u(x,a_*)|\bar{u}(x+X(a_*),a_*))\,dx\\
&\hspace{2.2in}+{C^{*}}T\Big(\epsilon+\epsilon\abs{\ln(\epsilon)}\Big)\Bigg)
\end{aligned}
\end{equation}

Because $\epsilon>0$ is arbitrary, we can choose 
\begin{align}
\epsilon\coloneqq \int\limits_{\mathbb{R}}\eta(u(x,a_*)|\bar{u}(x+X(a_*),a_*))\,dx
\end{align}
in \eqref{main_global_stability_result_scalar_almost}.  This gives

\begin{equation}
\begin{aligned}\label{main_global_stability_result_scalar_almost1}
&\int\limits_{\mathbb{R}}\eta(u(x,b_*)|\bar{u}(x+X(b_*),b_*))\,dx\\
&\hspace{.2in}\leq {C^{*}}e^{{C^{*}}(b_*-a_*)+{C^{*}}\int\limits_{a_*}^{b_*}\norm{\partial_ x\bar{u}(\cdot,t)}_{L^2(\mathbb{R})}^2\,dt}\Bigg[ \Bigg(\int\limits_{\mathbb{R}}\eta(u(x,a_*)|\bar{u}(x+X(a_*),a_*))\,dx\Bigg)^{{{C^{*}}(b_*-a_*)}}
\\\hspace{.5in}&+\Bigg(\int\limits_{\mathbb{R}}\eta(u(x,a_*)|\bar{u}(x+X(a_*),a_*))\,dx\Bigg)^{-{{C^{*}}(b_*-a_*)}}\Bigg]\Bigg( \int\limits_{\mathbb{R}}\eta(u(x,a_*)|\bar{u}(x+X(a_*),a_*))\,dx\\
&\hspace{.5in}+\int\limits_{\mathbb{R}}\eta(u(x,a_*)|\bar{u}(x+X(a_*),a_*))\,dx\abs{\ln\Bigg(\int\limits_{\mathbb{R}}\eta(u(x,a_*)|\bar{u}(x+X(a_*),a_*))\,dx\Bigg)}\Bigg).
\end{aligned}
\end{equation}

We have absorbed the factor $T$ into ${C^{*}}$ (recall that $T$ is fixed). 

Note that for $\alpha\in[0,\infty)$ the map
\begin{align}
\alpha\mapsto \frac{\alpha\abs{\ln(\alpha)}}{\sqrt{\alpha}+\alpha^2}
\end{align}
is bounded. Thus, there is a constant ${C^{*}}>0$ such that
\begin{align}\label{control_on_xlnx}
\alpha\abs{\ln(\alpha)}\leq {C^{*}} (\sqrt{\alpha}+\alpha^2).
\end{align}

Also note that 
\begin{align}\label{alpha_beta_estimate}
\alpha^\beta \leq \alpha^{1/4}+\alpha^3
\end{align}
for all $\alpha\in[0,\infty)$ and $\beta\in[\frac{1}{4},3]$.

Then, for  $b_*-a_*\leq \frac{1}{{2C^{*}}}$, we get from \eqref{control_on_xlnx}, \eqref{alpha_beta_estimate}, and \eqref{main_global_stability_result_scalar_almost1}, 

\begin{align}\label{main_global_stability_result_scalar_almost2}
&\int\limits_{\mathbb{R}}\eta(u(x,b_*)|\bar{u}(x+X(b_*),b_*))\,dx\\
&\hspace{.5in}\leq {C^{*}}e^{{C^{*}}(b_*-a_*)+{C^{*}}\int\limits_{a_*}^{b_*}\norm{\partial_ x\bar{u}(\cdot,t)}_{L^2(\mathbb{R})}^2\,dt}\Bigg[ \Bigg(\int\limits_{\mathbb{R}}\eta(u(x,a_*)|\bar{u}(x+X(a_*),a_*))\,dx\Bigg)^{{\frac{1}{4}}}
\\&\hspace{.7in}+\Bigg(\int\limits_{\mathbb{R}}\eta(u(x,a_*)|\bar{u}(x+X(a_*),a_*))\,dx\Bigg)^{3}\Bigg].
\end{align}

Finally, take the approximate limits as $a_*\to a^{+}$ and $b_*\to b^{-}$. Recall the dominated convergence theorem and \Cref{left_right_ap_limits} (which in particular gives the existence of the approximate limits of the space integral of relative entropy). This gives \eqref{main_global_stability_result_scalar_unit_time}.
\end{proof}

\subsection{Proof of the main theorem (\Cref{main_stability_result_statement_scalar}): Induction }\label{induction_section}

In this section, we use induction to extend the stability result in \Cref{main_stability_result_statement_scalar_unit_time} to allow for the two times of interest $a$ and $b$ to be greater than $\frac{1}{C}$ time apart (where $\frac{1}{C}$ is from \Cref{main_stability_result_statement_scalar_unit_time}), thus proving \Cref{main_stability_result_statement_scalar}.

Throughout this proof, $C$ will denote a generic constant depending on $T$.

We prove the following claim,

\begin{claim}
There exists constants $C>0$ and $\rho,\gamma>1$  depending on $T$ such that for all $a,b\in[0,T)$, 

\begin{equation}
\begin{aligned}\label{main_global_stability_result_scalar_almost5}
&\hspace{-1in}{\rm ap}\,\lim_{t\to {b}^{-}}\int\limits_{\mathbb{R}}\eta(u(x,t)|\bar{u}(x,t))\,dx\\
\leq
C \Bigg[\Bigg(&{\rm ap}\,\lim_{t\to {a}^{+}}\int\limits_{\mathbb{R}}\eta(u(x,t)|\bar{u}(x,t))\,dx\Bigg)^{\frac{1}{\gamma}}\\+\Bigg(&{\rm ap}\,\lim_{t\to {a}^{+}}\int\limits_{\mathbb{R}}\eta(u(x,t)|\bar{u}(x,t))\,dx\Bigg)^{\rho}\Bigg] e^{C (b-a)+
C\int\limits_a^{b}\norm{\partial_ x\bar{u}(\cdot,t)}_{L^2(\mathbb{R})}^2\,dt}.
\end{aligned}
\end{equation}
\end{claim}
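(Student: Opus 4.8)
The plan is to iterate \Cref{main_stability_result_statement_scalar_unit_time} along a partition of $[a,b]$ and then to control the nested powers and exponential weights that this produces. Write $C_0$ for the constant produced by \Cref{main_stability_result_statement_scalar_unit_time} (depending only on $\delta$ and $T$; without loss of generality $C_0\geq 1$), and for $t\in[0,T)$ abbreviate
\begin{align*}
\mathcal{E}^{\pm}(t):={\rm ap}\,\lim_{\tau\to t^{\pm}}\int_{\mathbb{R}}\eta(u(x,\tau)|\bar{u}(x+X(\tau),\tau))\,dx,
\end{align*}
which exist by \Cref{left_right_ap_limits} and satisfy $\mathcal{E}^{-}(t)\geq\mathcal{E}^{+}(t)$ for $t>0$. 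Fixing $a<b$ in $[0,T)$, I would set $N:=\lceil C_0 T\rceil$ and choose $n\leq N$ together with a partition $a=\tau_0<\tau_1<\cdots<\tau_n=b$ with each $\tau_{i+1}-\tau_i\leq 1/C_0$; this is possible because $b-a\leq T$. Writing $M_i:=\exp\big(C_0(\tau_{i+1}-\tau_i)+C_0\int_{\tau_i}^{\tau_{i+1}}\|\partial_x\bar{u}(\cdot,t)\|_{L^2(\mathbb{R})}^2\,dt\big)\geq 1$, \Cref{main_stability_result_statement_scalar_unit_time} applied on $[\tau_i,\tau_{i+1}]$ gives $\mathcal{E}^{-}(\tau_{i+1})\leq C_0 M_i\big((\mathcal{E}^{+}(\tau_i))^{1/4}+(\mathcal{E}^{+}(\tau_i))^{3}\big)$, and since $\mathcal{E}^{+}(\tau_{i+1})\leq\mathcal{E}^{-}(\tau_{i+1})$ by \Cref{left_right_ap_limits}, the same bound holds with $\mathcal{E}^{+}(\tau_{i+1})$ on the left.

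Setting $z_i:=\mathcal{E}^{+}(\tau_i)$ for $i<n$ and $z_n:=\mathcal{E}^{-}(b)$, this yields the scalar recursion $z_{i+1}\leq C_0 M_i(z_i^{1/4}+z_i^{3})$. I would then prove by induction that $z_i\leq A_i\big(z_0^{(1/4)^i}+z_0^{3^i}\big)$ with $A_0=1$ and $A_{i+1}\leq 10\,C_0 M_i A_i^{3}$. The inductive step uses the subadditivity $(x+y)^{1/4}\leq x^{1/4}+y^{1/4}$ and the convexity bound $(x+y)^{3}\leq 4(x^{3}+y^{3})$ to split each term, followed by the elementary estimate $\alpha^{\beta}\leq\alpha^{1/\gamma}+\alpha^{\rho}$ valid for all $\alpha\geq 0$ and $\beta\in[1/\gamma,\rho]$ (the general form of \eqref{alpha_beta_estimate}), applied to the four resulting exponents $(1/4)^{i+1}$, $3^{i}/4$, $3\cdot(1/4)^{i}$, $3^{i+1}$, all of which lie in $[(1/4)^{i+1},3^{i+1}]$; here one also uses $A_i\geq 1$ so that $A_i^{1/4}\leq A_i^{3}$, giving $2A_i^{1/4}+8A_i^{3}\leq 10A_i^{3}$.

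Unwinding $A_{i+1}\leq 10\,C_0 M_i A_i^{3}$ telescopes to $A_n\leq(10\,C_0)^{(3^{n}-1)/2}\prod_{i=0}^{n-1}M_i^{3^{n-1-i}}$. Because each $M_i\geq 1$ and every exponent satisfies $3^{n-1-i}\leq 3^{N}$, and because $\prod_{i=0}^{n-1}M_i=\exp\big(C_0(b-a)+C_0\int_a^b\|\partial_x\bar{u}(\cdot,t)\|_{L^2(\mathbb{R})}^2\,dt\big)$, I can bound $\prod_i M_i^{3^{n-1-i}}\leq\big(\prod_i M_i\big)^{3^{N}}$, so that $A_n\leq C\exp\big(C(b-a)+C\int_a^b\|\partial_x\bar{u}(\cdot,t)\|_{L^2(\mathbb{R})}^2\,dt\big)$ for a constant $C$ depending only on $N$ and $C_0$, hence only on $T$ and $\delta$. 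Finally, since $(1/4)^n$ and $3^n$ lie in $[(1/4)^N,3^N]$, one more application of $\alpha^{\beta}\leq\alpha^{1/\gamma}+\alpha^{\rho}$ with $\gamma:=4^{N}$ and $\rho:=3^{N}$ gives $z_0^{(1/4)^n}+z_0^{3^n}\leq 2\big(z_0^{1/\gamma}+z_0^{\rho}\big)$. Combining these estimates produces $z_n=\mathcal{E}^{-}(b)\leq C\big(z_0^{1/\gamma}+z_0^{\rho}\big)\exp(\cdots)$ with $z_0=\mathcal{E}^{+}(a)$, which is exactly \eqref{main_global_stability_result_scalar_almost5}.

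The main obstacle is the bookkeeping: confirming that the nonlinear (cubic) iteration does not destroy the required structure, and in particular that the exponential weight reappears only with the first power of $\int_a^b\|\partial_x\bar{u}\|_{L^2(\mathbb{R})}^2$ inside the exponent. This is precisely where $M_i\geq 1$ is essential, since it lets me trade all the accumulated powers $3^{n-1-i}$ for the single uniform exponent $3^{N}$, which is then absorbed into the constant $C=C(T,\delta)$ multiplying the integral in the exponent rather than appearing as a power of the entire exponential factor.
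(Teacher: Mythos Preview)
Your proof is correct and follows essentially the same strategy as the paper: both iterate \Cref{main_stability_result_statement_scalar_unit_time} across a partition of $[a,b]$ into pieces of length at most $1/C_0$, use \Cref{left_right_ap_limits} to chain the one-sided limits at the partition points, and control the compounding of the exponents via the elementary inequality \eqref{alpha_beta_estimate}. Your bookkeeping is somewhat tighter than the paper's---the paper applies the full inductive hypothesis to \emph{both} pieces of each split and thus squares the exponents at every step (yielding $\gamma,\rho$ of order $4^{2^{N}},3^{2^{N}}$), whereas your linear iteration peels off one unit-time piece at a time and produces $\gamma=4^{N},\rho=3^{N}$; your explicit handling of the multiplicative constants $A_i$ via the cubic recursion and the observation $M_i\geq 1$ to absorb the accumulated powers into a single $T$-dependent constant is also more transparent than the paper's treatment.
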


\vspace{.2in}

Let us comment on how this claim implies the main theorem (\Cref{main_stability_result_statement_scalar}): By \Cref{left_right_ap_limits}, 
\begin{align}
\label{right_limit_at_zero1}
\int\limits_{-\infty}^{\infty}\eta (u^0(x)|\bar{u}^0(x+X(0)))\,dx\geq {\rm ap}\,\lim_{t\to {0}^{+}}\int\limits_{-\infty}^{\infty}\eta (u(x,t)|\bar{u}(x+X(t),t))\,dx.
\end{align}
Further, by convexity of $\eta$,
\begin{align}
\label{from_convexity_eta_implies_theorem}
\int\limits_{-\infty}^{\infty}\eta (u(x,b)|\bar{u}(x+X(b),b))\,dx\leq {\rm ap}\,\lim_{t\to {b}^{-}}\int\limits_{-\infty}^{\infty}\eta (u(x,t)|\bar{u}(x+X(t),t))\,dx.
\end{align}


Then \eqref{main_global_stability_result_scalar_almost5}, \eqref{right_limit_at_zero1} and \eqref{from_convexity_eta_implies_theorem} imply  \eqref{main_global_stability_result_scalar_theorem_version} in the main theorem (\Cref{main_stability_result_statement_scalar}). Recall also that $X(0)=0$.

To get \eqref{L2_control_on_shift_theorem} in the main theorem, note that from \eqref{right_before_Gronwall_scalar} we get
\begin{equation}
\begin{aligned}\label{right_before_Gronwall_scalar462019_where_it_comes}
&\hspace{-.4in}C\int\limits_{a_*}^{b_*}\Bigg[\Big(\abs{\ln(\epsilon)}+\norm{\partial_ x\bar{u}(\cdot,t)}_{L^2(\mathbb{R})}^2+\ln(2)\Big)\int\limits_{\mathbb{R}}\abs{u(x,t)-\bar{u}(x+X(t),t)}^2\,dx+\Big(\epsilon+\epsilon\abs{\ln(\epsilon)}\Big)\norm{A(u|\bar{u})}_{L^\infty(\mathbb{R})}\Bigg]\,dt
\\
&\hspace{1in}+C\int\limits_{\mathbb{R}}\abs{u(x,a_*)-\bar{u}(x+X(a_*),a_*)}^2\,dx
\geq 
\int\limits_{a_*}^{b_*}\frac{1}{C}(\dot{X}(t))^2\,dt.
\end{aligned}
\end{equation}

Then, for $\epsilon$ we choose 
\begin{align*}
\epsilon\coloneqq \int\limits_{\mathbb{R}}\eta(u(x,0)|\bar{u}(x,0))\,dx.
\end{align*}

And then we bootstrap, and use \eqref{main_global_stability_result_scalar_theorem_version} to estimate the term
\begin{align*}
\int\limits_{\mathbb{R}}\abs{u(x,t)-\bar{u}(x+X(t),t)}^2\,dx
\end{align*}
in \eqref{right_before_Gronwall_scalar462019_where_it_comes}. We then use estimates similar to \eqref{control_on_xlnx} and \eqref{alpha_beta_estimate}.

We now prove the claim.

\begin{claimproof}
We prove the claim by induction. We show that  for each $n\in\mathbb{N}$, if $S\in[\frac{n-1}{C},\frac{n}{C}]$, then there exists $C>0$ and $\rho,\gamma>1$ both depending on $n$ such that \eqref{main_global_stability_result_scalar_almost5} holds for all  $a,b\in[0,T)$ with $b-a\in(0,S)$.

\vspace{.07in}

\uline{Base case}
The base case follows directly from \Cref{main_stability_result_statement_scalar_unit_time}.

\vspace{.07in}

\uline{Induction step}

\vspace{.07in}

Assume that for a fixed $n\in\mathbb{N}$, if $S\leq\frac{n}{C}$, then there exists $C>0$ and $\rho,\gamma>1$ both depending on $n$ such that \eqref{main_global_stability_result_scalar_almost5} holds for all $a,b\in[0,T)$ with $b-a\in(0,S)$. 

We now show that if $S\in[\frac{n}{C},\frac{n+1}{C}]$, then there exists $C>0$ and $\rho,\gamma>1$ both depending on $n$ such that \eqref{main_global_stability_result_scalar_almost5} holds for all $a,b\in[0,T)$ with $b-a\in(0,S)$. 

If $b-a<\frac{n}{C}$, then by the induction hypothesis we are done. Thus, assume $b-a\in[\frac{n}{C},\frac{n+1}{C})$. Then, by the induction hypothesis,

\begin{equation}
\begin{aligned}\label{main_global_stability_result_scalar_almost6}
&\hspace{-1in}{\rm ap}\,\lim_{t\to {b}^{-}}\int\limits_{\mathbb{R}}\eta(u(x,t)|\bar{u}(x,t))\,dx\\
\leq
C \Bigg[\Bigg(&{\rm ap}\,\lim_{t\to {(b-n)}^{+}}\int\limits_{\mathbb{R}}\eta(u(x,t)|\bar{u}(x,t))\,dx\Bigg)^{\frac{1}{\gamma}}
\\
\hspace{1.5in}+\Bigg(&{\rm ap}\,\lim_{t\to {(b-n)}^{+}}\int\limits_{\mathbb{R}}\eta(u(x,t)|\bar{u}(x,t))\,dx\Bigg)^{\rho}\Bigg]  e^{C n+
C\int\limits_{(b-n)}^{b}\norm{\partial_ x\bar{u}(\cdot,t)}_{L^2(\mathbb{R})}^2\,dt}.
\end{aligned}
\end{equation}

Similarly, from the induction hypothesis, we have that 

\begin{equation}
\begin{aligned}\label{main_global_stability_result_scalar_almost7}
&\hspace{-1in}{\rm ap}\,\lim_{t\to {(b-n)}^{-}}\int\limits_{\mathbb{R}}\eta(u(x,t)|\bar{u}(x,t))\,dx\\
\leq
C \Bigg[\Bigg(&{\rm ap}\,\lim_{t\to {a}^{+}}\int\limits_{\mathbb{R}}\eta(u(x,t)|\bar{u}(x,t))\,dx\Bigg)^{\frac{1}{\gamma}}
\\
\hspace{1.5in}+\Bigg(&{\rm ap}\,\lim_{t\to {a}^{+}}\int\limits_{\mathbb{R}}\eta(u(x,t)|\bar{u}(x,t))\,dx\Bigg)^{\rho}\Bigg] e^{C (b-n-a)+
C\int\limits_{a}^{b-n}\norm{\partial_ x\bar{u}(\cdot,t)}_{L^2(\mathbb{R})}^2\,dt}.
\end{aligned}
\end{equation}

Remark that for any $\alpha>0$, 
\begin{align}\label{double_root}
(\alpha^{1/\gamma}+\alpha^{\rho})^{\rho} \leq 2^{\rho} (\alpha^{\rho/\gamma} +\alpha^{\rho^2} ).
\end{align}
This is derived by considering when $\alpha<1$ and when $\alpha>1$.

Similarly, for any $\alpha>0$, 
\begin{align}\label{double_root1}
(\alpha^{1/\gamma}+\alpha^{\rho})^{1/\gamma} \leq 2^{1/\gamma} (\alpha^{\frac{1}{\gamma^2}} +\alpha^{\rho/\gamma}).
\end{align}
Combining \eqref{double_root} and \eqref{double_root1}, we get
\begin{align}\label{double_root2}
(\alpha^{1/\gamma}+\alpha^{\rho})^{\rho}+(\alpha^{1/\gamma}+\alpha^{\rho})^{1/\gamma}
\leq
2^{\rho+2}(\alpha^{\frac{1}{\gamma^2}}+\alpha^{\rho^2}).
\end{align}

This is similarly derived by considering when $\alpha<1$ and when $\alpha>1$.

We combine \eqref{main_global_stability_result_scalar_almost7} and \eqref{double_root2} to get
\begin{align}\label{induction_part1}
&\Bigg({\rm ap}\,\lim_{t\to {(b-n)}^{+}}\int\limits_{\mathbb{R}}\eta(u(x,t)|\bar{u}(x,t))\,dx\Bigg)^{\frac{1}{\gamma}}+\Bigg({\rm ap}\,\lim_{t\to {(b-n)}^{+}}\int\limits_{\mathbb{R}}\eta(u(x,t)|\bar{u}(x,t))\,dx\Bigg)^{\rho}\\
&\hspace{.5in}\leq
C \Bigg[\Bigg({\rm ap}\,\lim_{t\to {a}^{+}}\int\limits_{\mathbb{R}}\eta(u(x,t)|\bar{u}(x,t))\,dx\Bigg)^{\frac{1}{\gamma^2}}
\\&\hspace{1in}+\Bigg({\rm ap}\,\lim_{t\to {a}^{+}}\int\limits_{\mathbb{R}}\eta(u(x,t)|\bar{u}(x,t))\,dx\Bigg)^{\rho^2}\Bigg] e^{C (b-n-a)+
C\int\limits_{a}^{b-n}\norm{\partial_ x\bar{u}(\cdot,t)}_{L^2(\mathbb{R})}^2\,dt}.
\end{align}

Then, we plug \eqref{induction_part1} into \eqref{main_global_stability_result_scalar_almost6}, to get 
\begin{equation}
\begin{aligned}\label{main_global_stability_result_scalar_almost6_almost_done}
&\hspace{-1in}{\rm ap}\,\lim_{t\to {b}^{-}}\int\limits_{\mathbb{R}}\eta(u(x,t)|\bar{u}(x,t))\,dx\\
\leq
C \Bigg[\Bigg(&{\rm ap}\,\lim_{t\to {a}^{+}}\int\limits_{\mathbb{R}}\eta(u(x,t)|\bar{u}(x,t))\,dx\Bigg)^{\frac{1}{\gamma^2}}
\\
+\Bigg(&{\rm ap}\,\lim_{t\to {a}^{+}}\int\limits_{\mathbb{R}}\eta(u(x,t)|\bar{u}(x,t))\,dx\Bigg)^{\rho^2}\Bigg]  e^{C (a-b)+
C\int\limits_{a}^{b}\norm{\partial_ x\bar{u}(\cdot,t)}_{L^2(\mathbb{R})}^2\,dt}.
\end{aligned}
\end{equation}

This shows that the claim holds for $S\in[\frac{n}{C},\frac{n+1}{C}]$.

\vspace{.07in}

Thus, by the principle of induction we have shown the claim.
\end{claimproof}

\section{Appendix}

\subsection{Proof of \Cref{Filippov_existence_scalar}}\label{Filippov_existence_scalar_proof_section}

We prove \eqref{fact1_scalar}, \eqref{fact2_scalar}, and \eqref{fact3_scalar}. Our proof is based on the proof of Proposition 1 in \cite{Leger2011}, the proof of Lemma 2.2 in \cite{serre_vasseur}, and the proof of Lemma 3.5 in \cite{2017arXiv170905610K}. The properties  \eqref{fact4_scalar} and  \eqref{fact5_scalar} are not proved here; see Lemma 6 in \cite{Leger2011} and the proofs in the appendix in \cite{Leger2011}.

Define

\begin{align}
v_n(x,t)\coloneqq \int\limits_0^1 V\bigg(u(x+\frac{y}{n},t),t\bigg)\,dy.
\end{align}

Let $h_{n}$ be the solution to the ODE:
\begin{align}
  \begin{cases}\label{n_ode_scalar}
   \dot{h}_n(t)=v_n(h_n(t),t),\mbox{ for }t>0\\
   h_n(0)=x_0.
  \end{cases}
\end{align}

Due to the boundedness of $V$, $v_n$ is bounded. Further, $v_n$ is Lipschitz continuous in $x$ due to the mollification by $\frac{1}{n}$. We also have that $v_n$ is measurable in $t$. Thus \eqref{n_ode_scalar} has a unique solution $h_n$ in the sense of Carath\'eodory.

The $h_n$ are Lipschitz continuous with Lipschitz constants uniform in $n$, due to the $v_n$ being uniformly bounded in $n$ (\hspace{.07cm}$\norm{v_n}_{L^\infty}\leq \norm{V}_{L^\infty}$). By Arzel\`a--Ascoli we conclude the $h_n$ converge in $C^0(0,T)$ for any fixed $T>0$ to a Lipschitz continuous function $h$ (passing to a subsequence if necessary). Note that $\dot{h}_n$ converges in $L^\infty$ weak* to $\dot{h}$.

We define
\begin{align}
V_{\mbox{max}}(t)\coloneqq \max\{V(u_-,t),V(u_+,t)\},\\
V_{\mbox{min}}(t)\coloneqq \min\{V(u_-,t),V(u_+,t)\},
\end{align}
where $u_\pm \coloneqq u(h(t)\pm,t)$.

To show \eqref{fact3_scalar}, we will first prove that for almost every $t>0$
\begin{align}
\lim_{n\to\infty}[\dot{h}_n(t)-V_{\mbox{max}}(t)]_+=0,\label{limit_1_scalar}\\
\lim_{n\to\infty}[V_{\mbox{min}}(t)-\dot{h}_n(t)]_+=0,\label{limit_2_scalar}
\end{align}
where $[\hspace{.1cm}\cdot\hspace{.1cm}]_+\coloneqq\max(0,\cdot)$.

The proofs of \eqref{limit_1_scalar} and \eqref{limit_2_scalar} are similar. Let us show only the first one.

\begin{align}
[\dot{h}_n(t)-V_{\mbox{max}}(t)]_+\\
=\Bigg[\int\limits_0^1 V\bigg(u(h_n(t)+\frac{y}{n},t),t\bigg)\,dy-V_{\mbox{max}}(t)\Bigg]_+\\
=\Bigg[\int\limits_0^1 V\bigg(u(h_n(t)+\frac{y}{n},t),t\bigg)-V_{\mbox{max}}(t)\,dy\Bigg]_+\\
\leq\int\limits_0^1 \Big[V\bigg(u(h_n(t)+\frac{y}{n},t),t\bigg)-V_{\mbox{max}}(t)\Big]_+\,dy\\
\leq\esssup_{y\in(0,\frac{1}{n})} \Big[V\bigg(u(h_n(t)+y,t),t\bigg)-V_{\mbox{max}}(t)\Big]_+\\
\leq\esssup_{y\in(-\epsilon_n,\epsilon_n)} \Big[V\bigg(u(h(t)+y,t),t\bigg)-V_{\mbox{max}}(t)\Big]_+,\label{last_ineq_Filippov_scalar}
\end{align}
where $\epsilon_n\coloneqq \abs{h_n(t)-h(t)}+\frac{1}{n}$. Note $\epsilon_n\to0^+$.

Fix a $t\geq0$ such that $u$ has a strong trace in the sense of \Cref{strong_trace_definition}. Then by the continuity of $V$ in the $u$ slot,
\begin{align}\label{esssuplim_is_zero_scalar}
\lim_{n\to\infty}\esssup_{y\in(0,\frac{1}{n})} \Big[V\bigg(u(h(t)\pm y,t),t\bigg)-V\big(u_\pm,t\big)\Big]_+=0,
\end{align}
where $u_\pm = u(h(t)\pm,t)$.

From \eqref{esssuplim_is_zero_scalar}, we get
\begin{align}\label{esssuplim_is_zero2_scalar}
\lim_{n\to\infty}\esssup_{y\in(0,\frac{1}{n})} \Big[V\bigg(u(h(t)\pm y,t),t\bigg)-V_{\mbox{max}}(t)\Big]_+=0.
\end{align}

We can control \eqref{last_ineq_Filippov_scalar} from above by the quantity
\begin{equation}
\begin{aligned}\label{esssuplim_is_zero3_scalar}
\esssup_{y\in(-\epsilon_n,0)} \Big[V\bigg(u(h(t)+ y,t),t\bigg)-V_{\mbox{max}}(t)\Big]_++\\
\esssup_{y\in(0,\epsilon_n)} \Big[V\bigg(u(h(t)+ y,t),t\bigg)-V_{\mbox{max}}(t)\Big]_+.
\end{aligned}
\end{equation}

By \eqref{esssuplim_is_zero2_scalar}, we have that \eqref{esssuplim_is_zero3_scalar} goes to $0$ as $n\to\infty$. This proves \eqref{limit_1_scalar}.

Recall that $\dot{h}_n$ converges in $L^\infty$ weak* to $\dot{h}$. Thus,
\begin{align}
\int\limits_0^T[\dot{h}(t)-V_{\mbox{max}}(t)]_+\,dt\leq \liminf_{n\to\infty}\int\limits_0^T[\dot{h}_n(t)-V_{\mbox{max}}(t)]_+\,dt,
\end{align}
because the function $[\hspace{.1cm}\cdot\hspace{.1cm}]_+$ is convex.

We have from the dominated convergence theorem and \eqref{limit_1_scalar},
\begin{align}
\liminf_{n\to\infty}\int\limits_0^T[\dot{h}_n(t)-V_{\mbox{max}}(t)]_+\,dt=0.
\end{align}

We conclude,
\begin{align}
\int\limits_0^T[\dot{h}(t)-V_{\mbox{max}}(t)]_+\,dt=0.
\end{align}

From a similar argument,
\begin{align}
\int\limits_0^T[V_{\mbox{min}}(t)-\dot{h}(t)]_+\,dt=0.
\end{align}

This proves \eqref{fact3_scalar}.

\subsection{Proof of \Cref{left_right_ap_limits}}\label{approx_limits_appendix}
We present a proof of  \Cref{left_right_ap_limits}, based on the proof of Lemma 2.4 in \cite[p.~11]{2017arXiv170905610K}. 

Define $\Gamma:[0,T)\to\mathbb{R}$, 
\begin{align*}
&\Gamma(t)\coloneqq \int\limits_{-\infty}^{\infty}\eta (u(x,t)|\bar{u}(x+X(t),t))\,dx 
\\
&\hspace{.7in}-\int\limits_{0}^{t}q(u(h(t)+,t);\bar{u}(s(t)+,t))-q(u(h(t)-,t);\bar{u}(s(t)-,t))
\\
&\hspace{.7in}-\dot{h}(t)\big(\eta(u(h(t)+,t)|\bar{u}(s(t)+,t))-\eta(u(h(t)-,t)|\bar{u}(s(t)-,t))\big)\,dt
\\
&\hspace{.7in}+\int\limits_{0}^{t} \int\limits_{-\infty}^{\infty}\Bigg[\Bigg(\partial_x \bigg|_{(x+X(t),t)}\hspace{-.45in} \eta'(\bar{u}(x,t))\Bigg) A(u(x,t)|\bar{u}(x+X(t),t))
\\
&\hspace{.7in}+\Bigg(2\partial_x\bigg|_{(x+X(t),t)}\hspace{-.45in}\bar{u}(x,t)\dot{X}(t)\Bigg)\eta''(\bar{u}(x+X(t),t))[u(x,t)-\bar{u}(x+X(t),t)]
-
\\
&\hspace{.7in}\eta'(u(x,t)|\bar{u}(x+X(t),t))G(u(\cdot,t))(x)
\\
&\hspace{-.2in}+
\Bigg(G(\bar{u}(\cdot,t))(x+X(t))-G(u(\cdot,t))(x)\Bigg)\eta''(\bar{u}(x+X(t),t))[u(x,t)-\bar{u}(x+X(t),t)]\Bigg]\,dxdt.
\end{align*}

By \eqref{nonlocal_dissipation_formula}, $\Gamma(t)\geq\Gamma(s)$ for almost every $t$ and $s$ verifying $t<s$. We conclude that there exists a function which equals $\Gamma$ almost everywhere and is non-increasing. Thus, $\Gamma$ has approximate left and right  limits. In particular, via the dominated convergence theorem, the approximate right- and left-hand limits \eqref{ap_right_left_limits_exist_44}
exist for all $t_0\in(0,T)$ and satisfy \eqref{left_and_right_limits_order}.
Note the approximate right-hand limit also exists for $t_0=0$. Furthermore, because \eqref{nonlocal_dissipation_formula} holds for the time $a=0$, the approximate right-hand limit verifies \eqref{right_limit_at_zero} at time zero.

\bibliographystyle{plain}
\bibliography{references}
\end{document}